\theoremstyle{plain}
\newtheorem{theo}{Theorem}[section]
\newtheorem{lema}[theo]{Lemma}
\newtheorem{prop}[theo]{Proposition}
\newtheorem{cor}[theo]{Corollary}
\theoremstyle{definition}
\newtheorem{rem}{Remark}[section]
\newtheorem{defn}{Definition}[section]
\DeclareMathOperator{\Aut}{{\rm Aut}}
\DeclareMathOperator{\GL}{{\rm GL}}
\DeclareMathOperator{\SL}{{\rm SL}}
\DeclareMathOperator{\M}{{\rm M}}            
\DeclareMathOperator{\nor}{{\rm {\rm N}_G}}
\DeclareMathOperator{\Zcl}{{\rm Zcl}}        
\DeclareMathOperator{\supp}{{\rm supp}}      
\DeclareMathOperator{\Ad}{{\rm Ad}}          
\DeclareMathOperator{\rad}{{\rm rad}}
\DeclareMathOperator{\Lie}{{\rm Lie}}        
\newcommand{\vect}[1]{{\boldsymbol{#1}}}
\newcommand{\ve}{\vect{e}}
\newcommand{\vp}{\vect{p}}
\newcommand{\vs}{\vect{s}}
\newcommand{\vt}{\vect{t}}
\newcommand{\vv}{\vect{v}}
\newcommand{\vy}{\vect{y}}
\newcommand{\field}[1]{\mathbb{#1}} 
\newcommand{\C}{\field{C}} 
\newcommand{\R}{\field{R}}
\newcommand{\N}{\field{N}}
\newcommand{\Q}{\field{Q}}
\newcommand{\Z}{\field{Z}}
\providecommand{\abs}[1]{\lvert#1\rvert}
\providecommand{\pline}[1]{<\!#1\!>}
\providecommand{\pLine}[1]{\bigl<#1\bigr>}
\providecommand{\trn}[1]{{\,^{\bf T}\!#1}}
\providecommand{\card}[1]{\#(#1)}
\newcommand{\num}{\abs}
\renewcommand{\setminus}{\smallsetminus}
\newcommand{\qp}{K}   
\newcommand{\zp}{O}   
\newcommand{\frakO}{{\mathfrak O}} 
\newcommand{\inv}{^{-1}}
\newcommand{\cl}[1]{\overline{#1}}
\newcommand{\smat}[1]{\bigl(\begin{smallmatrix} #1 \end{smallmatrix}\bigr)}
\def\frakF{\mathfrak{F}}
\newcommand{\cB}{\mathcal{B}}
\newcommand{\frakB}{\mathfrak{B}}
\newcommand{\cP}{\mathcal{P}}
\newcommand{\cJ}{\mathcal{J}}
\newcommand{\cC}{\mathcal{C}}
\newcommand{\cM}{\mathcal{M}}
\newcommand{\cG}{\mathcal{G}}
\newcommand{\cR}{\mathcal{R}}
\newcommand{\cS}{\mathcal{S}}
\newcommand{\cT}{\mathcal{T}}
\newcommand{\cH}{\mathcal{H}}
\newcommand{\cW}{\mathcal{W}}
\newcommand{\frakV}{\mathfrak{V}}
\begin{document}

\title{Unipotent flows on products of   ${\rm SL}(2,K)/\Gamma$'s}
\author{Nimish A. Shah}
\address{Tata Institute of Fundamental Research, Mumbai 40005, India}
\email{nimish@math.tifr.res.in}

\date{}

\begin{abstract}
  We will give a simplified and a direct proof of a special case of
  Ratner's theorem on closures and uniform distribution of individual
  orbits of unipotent flows; namely, the case of orbits of the
  diagonally embedded unipotent subgroup acting on ${\rm
    SL}(2,K)/\Gamma_1\times \dots \times {\rm SL}(2,K)/\Gamma_n$,
  where $K$ is a locally compact field of characteristic $0$ and each
  $\Gamma_i$ is a cocompact discrete subgroup of ${\rm SL}(2,K)$.
  This special case of Ratner's theorem plays a crucial role in the
  proofs of uniform distribution of Heegner points by Vatsal, and
  Mazur conjecture on Heegner points by C.  Cornut; and their
  generalizations in their joint work on CM-points and quaternion
  algebras. A purpose of the article is to make the ergodic theoretic
  results accessible to a wide audience.
\end{abstract}

\maketitle

\section{Introduction}

In the mid seventies M.S. Raghunathan had conjectured that dynamical
properties of individual orbits of unipotent flows on finite volume
homogeneous spaces of semisimple Lie groups show a remarkable
algebraic behaviour; namely, the closure of any non-periodic orbit is
a finite volume homogeneous space of a larger subgroup. This
conjecture was motivated by an approach to resolve Oppenheim
conjecture on values of quadratic forms at integral points. A precise
form of Raghunathan's conjecture, and its important measure theoretic
analogues were formulated by S.G. Dani, who also verified those
conjectures for horospherical flows in the early eighties. This work
attracted greater attention to the Raghunathan conjecture and its
extensions.  It generated a lot of excitement when in the late
eighties G.A. Margulis fully settled the Oppenheim conjecture in
affirmation by verifying Raghunathan's conjecture for certain very
specific cases.  This seems to be the first major triumph of the power
of ergodic theoretic methods in solving long standing number theoretic
problems.  Soon after, by the beginning of the nineties M. Ratner
obtained complete affirmative resolution of the above mentioned
conjectures on unipotent flows, and also proved the uniform
distribution for the individual orbits, through a series of long
technical papers~\cite{R:invent,R:acta,R:measure,R:duke} involving
many deep ideas.  Ratner's theorems were very powerful tools ready to
be used.  Since than several types of new Diophantine approximation
results have been proved using the algebraic properties of unipotent
dynamics. The dynamical results were later generalized for $p$-adic
Lie groups by Ratner~\cite{R:p-adic}; as well as by Margulis and
Tomanov~\cite{Mar+Tom:Invent}, whose also gave shorter and more
conceptual proofs in all cases.

What really surprises me about the $p$-adic case of Ratner theorem is
the way it gets utilized in the work of V.
Vatsal~\cite{Vatsal:uniform} on uniform distribution of Heegner
points. Using a combination of remarkable number theoretic results and
his observations, Vatsal reduced the study of distribution of Heegner
points to the following combinatorial problem:

Let $\cT$ be a $p+1$-regular tree for a prime $p$, and
$\cG=\cT/\Gamma$ be a finite quotient graph, where $\Gamma$ is group
of automorphisms of $\cT$ with finite stabilizers of vertices. Let
$\Gamma'$ be a conjugate of $\Gamma$ in $\Aut(\cT)$ such that $\Gamma$
and $\Gamma'$ do not have a common subgroup of finite index; that is,
they are not commensurable. Fix a base point $v_0$ in $\cT$, and let
$\cT(n)$ denote the vertices of $\cT$ at the distance $n$ from $v_0$.
Consider the finite graph $\cG'=T/\Gamma'$, and let $q:\cT\to \cG$ and
$q':\cT\to \cG'$ denote the natural quotient maps. We embed $\cT$
diagonally in $\cT\times \cT$, and project it onto $\cG\times \cG'$;
more precisely we consider the map $\bar\Delta:\cT\to \cG\times \cG'$
given by $\bar\Delta(v)=(q(v),q'(v))$. The question is whether
$\bar\Delta(T(n))$ surjects onto $\cG\times \cG'$ for large $n$, and
does it visit all points of the product graph with the correct
limiting frequency as $n\to\infty$?

His question was motivated by the fact that on a finite non-bipartite
regular graph, a random walk of step $n$ is uniformly distributed as
$n\to\infty$. On the other hand in this case it is already a question
whether the image of the diagonally embedded $\cT$ is surjective on
$\cG\times \cG'$. In the actual situation of interest, $\cT\cong
\SL_2(\Z_p)\backslash\SL_2(\Q_p)$, realized as the Bruhat-Tits tree,
and $\Gamma$ is a cocompact discrete subgroup of $\SL_2(\Q_p)$ so that
$\cG$ is associated to the quotient by the right action of $\Gamma$,
and $\Gamma'$ is a conjugate of $\Gamma$ in $\SL_2(\Q_p)$. Therefore
the surjectivity of the diagonal embedding follows if we can show that
the set $\Gamma\Gamma'$ is dense in $\SL_2(\Q_p)$; or more generally,
if the element-wise product of any two non-commensurable lattices in
$\SL_2(\Q_p)$ is a dense subset of $\SL_2(\Q_p)$.

Vatsal asked this question to Raghunathan, who realizing this as a
question about orbit closures for $\Gamma$-action on
$\SL_2(\Q_p)/\Gamma'$ consulted Dani. The same question was earlier
posed and answered in author's Masters thesis~\cite{Shah:MSc} for
lattices in $\SL_2(\R)$ and $\SL_2(\C)$, and later in
\cite{Shah:genuni} for the lattices in arbitrary real semisimple Lie
groups using Ratner's theorem. Dani informed Vatsal that his guess was
indeed correct, and showed how to deduce the density result using
orbit closure results for actions of semisimple subgroup on $p$-adic
homogeneous spaces. Later using Ratner's uniform distribution results
for unipotent flows on the homogeneous space $\SL_2(\Q_p)/\Gamma\times
\SL_2(\Q_p)/\Gamma'$, Vatsal also deduced the uniform distribution for
the set $\bar\Delta(\cT(n))$ as $n\to\infty$ in $\cG\times \cG'$.

It is remarkable that the above seemingly combinatorial question about
products of certain finite graphs turns out to be intimately connected
to deep algebraic behaviour of ergodic properties of unipotent flows;
and these flows are analysed using local arguments involving the
adjoint actions on the Lie algebra near the origin.

In what follows, we would like to give a self contained proof of the
above surjectivity of the diagonal embedding of a tree in the product
of several regular finite graphs as above. The published proofs of
Ratner's theorem for $p$-adic Lie groups are quite intricate and they
require taking care of many different possibilities associated to the
general case.  Our purpose here is to follow the original arguments of
Margulis~\cite{Mar:Oppen-Oslo} used in his proof of Oppenheim
conjecture, as well as those used in its extensions by Dani and
Margulis~\cite{Dani+Mar:Oppen-Invent}, along with additional
observations to give an elementary proof.

In later works \cite{Cor:Mazur,Vat:special,CV:cm}, Vatsal and Cornut
also require the closure and the uniform distribution results for
products of several copies of $\SL_2(\qp)$ for any finite extension
$\qp$ of $\Q_p$. To take care of this, we have given our proofs for
all local fields $K$ of characteristic $0$ in place of $\Q_p$, without
introducing any extra complications.

After the introduction, the article gets divided into two independent
parts. In \SS~2--4, a proof of the orbit closure result is given. Near
the end of this proof we also need to assume a technical result on
`uniform recurrence in linear time' on the `non-singular' set for the
case of the product of $n-1$-copies.  The \SS~6 to 9 are devoted to
proving this result, which in other words says that a non-singular
unipotent orbit contributes zero measure on the singular set in its
limiting distribution. Once we have proved this result, in \S~10 we
combine it with Ratner's description of ergodic invariant measures for
unipotent flows and quickly deduce the result on uniform distribution.
In this way, it is possible to directly proceed to \S~6, directly
after reading the Introduction, if one is only interested in the
uniform distribution result. The Section~5 in the middle is devoted to
results on closures of $H$-orbits and commensurability of lattices.

\subsection{Notation}
Let $\qp$ denote a local field of characteristic zero.  Let $n\geq 1$
be given. Let $G=\SL_2(\qp)^n$.  For $\emptyset\neq
J\subset\{1,\ldots,n\}$, let
\begin{eqnarray*}
G_J&=&\{(g_1,\ldots,g_n)\in G: g_k=e,\, \forall k\not\in J\} \\
H_J&=&\{(g_1,\ldots,g_n)\in G_J: g_i=g_j,\, \forall i,j\in J;\ g_k=e,\
\forall k\not\in J\}.
\end{eqnarray*}
Then $G_J\cong\SL_2(\qp)^{\num{J}}$ and $H_J\cong \SL_2(\qp)$, which is
diagonally embedded in $\SL_2(\qp)^{\num{J}}$, where $\num{J}$ denotes the
cardinality of $J$. If $\num{J}=1$ then $H_J=G_J$.

Let $\cC$ denote the collection of sets of the form
$\cJ=\{J_1,\ldots,J_m\}$, where $1\leq m\leq n$, $J_i\subset
\{1,\ldots,n\}$, $J_i\neq\emptyset$, and $J_i\cap J_j=\empty$ for all
$i\neq j$. Define
\begin{equation*}
H_{\cJ}=H_{J_1}\cdots H_{J_m}.
\end{equation*}

Let 
\begin{equation*}
w_1(t)=\smat{1&t\\0&1}, \  \forall t\in\qp; \qquad 
d_1(\alpha)=\smat{\alpha& \\ &\alpha\inv}, \  \forall
\alpha\in\qp^\times;
\end{equation*}
\begin{eqnarray*}
W&=&\{w(\vt)=(w_1(t_1),\ldots,w_1(t_n)):
    \vt=(t_1,\ldots,t_n)\in \qp^n\} \\
A&=&\{(d_1(\alpha_1),\ldots,d_1(\alpha_n)):\alpha_j\in\qp^\times\}. 
\end{eqnarray*} 
We also consider
\begin{eqnarray*}
H&=&\{(g,\ldots,g)\in G: g\in\SL_2(\qp)\}=H_{\{1,\ldots,n\}}\\ 
U&=&\{u(t)=w(t,\ldots,t):t\in\qp\}=W\cap H\\
D&=&\{d(\alpha)=(d_1(\alpha),\ldots,d_1(\alpha)):\alpha\in\qp^\times\}=A\cap
H, \\ 
U^\perp&=&\{w(t_1,\ldots,t_{n-1},0):t_j\in\qp\}.
\end{eqnarray*}

\subsubsection*{Assumption.}
For $j=1,\ldots,n$, let $\Gamma_j$ be a discrete subgroup of
$G_{\{j\}}$ such that $G_{\{j\}}/\Gamma_j$ is compact, and let
$\Gamma=\Gamma_1\cdots\Gamma_n$. Then
\begin{equation} \label{eq:Gamma-prod}
  G/\Gamma\cong G_{\{1\}}/\Gamma_1 \times \dots \times
  G_{\{n\}}/\Gamma_n.
\end{equation} 

In this article, we will consider the action of $G$ on $G/\Gamma$ by
left translations; that is, if $g\in G$ and $x\in G/\Gamma$ then
$gx:=(gg_1)[\Gamma]$, where $g_1\in G$ is such that $x=g_1[\Gamma]$ is
the coset of $g_1$ in $G/\Gamma$. Also for any $A\subset G$ and
$X\subset G/\Gamma$, we define $AX=\{ax:a\in A,\,x\in X\}\subset
G/\Gamma$. 

We endow $G/\Gamma$ with the quotient topology; that is, a set
$X\subset G/\Gamma$ is closed (or open) if and only if its inverse
image in $G$ is closed (resp.\ open). Thus, given any $A\subset G$,
and $x=g[\Gamma]\in G/\Gamma$ for some $g\in G$, the set $Ax$ is
closed in $G/\Gamma$ if and only if $Ag\Gamma$ is a closed subset of
$G$.

Let
\begin{equation*}
\cC_0:=\{\cJ:\cup_{J\in\cJ} J
=\{1,\ldots,n\}\}=\{\cJ\in\cC:H_{\cJ}\supset U\}.
\end{equation*}

\subsection{Statements of the main results}

\begin{theo} \label{thm:closure}
  Given $n\geq 1$, let $G$, $\Gamma$, $U$, and the other notation be
  as above. For any $x\in G/\Gamma$, there exists $\cJ\in\cC_0$ and
  $w\in W$ such that
\begin{equation*}
\cl{Ux}=(wH_{\cJ} w\inv)x.
\end{equation*}
\end{theo}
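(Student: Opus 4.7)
I would argue by induction on $n$, enlarging the invariance of $\overline{Ux}$ step by step via the Margulis polynomial-divergence method, until the enlarged invariance group has the form $wH_{\cJ}w\inv$ for some $\cJ\in\cC_0$, $w\in W$. The base case $n=1$ is the classical statement that on the compact quotient $\SL_2(\qp)/\Gamma_1$ every horocycle orbit is dense (so $\cJ=\{\{1\}\}$, $w=e$, and $\overline{Ux}=G/\Gamma$); this follows from a straightforward minimality argument using that $U$ is not contained in any proper closed subgroup of $\SL_2(\qp)$ meeting $\Gamma_1$ cocompactly.

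For $n\geq 2$ set $Y\dfn\overline{Ux}$, which is compact. Pick a $U$-minimal closed set $Z\subseteq Y$ (Zorn), and consider the ``stabilizer of $Y$ along $Z$'',
\[
L\dfn\{g\in G \st gZ\subseteq Y\}.
\]
The heart of the argument is to show that $L$ contains a closed subgroup $H'$ strictly bigger than $U$, and in fact of the form $wH_{\cJ}w\inv$ with $H_{\cJ}Z=Y$. The key input is the Margulis trick: by $U$-minimality of $Z$, any point $z\in Z$ returns infinitely often to any neighborhood of itself, so there exist sequences $z_k,z_k'\in Z$ with $z_k'=g_kz_k$ and $g_k\to e$, $g_k\notin U$. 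Conjugating by large elements $u(t_k)\in U$ and using that $\Ad(u(t))$ acts by polynomials of uniformly bounded degree on $\Lie(G)=\sl_2(\qp)^n$, one picks $t_k$ so that $u(t_k)g_ku(t_k)\inv$ stays bounded but converges to a nontrivial limit $h\in L\setminus U$. The direction of $h$ is controlled by the ``slowest growing'' weight component of $g_k$ in the decomposition of $\Lie(G)$ under $\Ad(U)$.

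A careful analysis of these weight components (the centralizer is $W$, and other weight spaces sit inside the $\SL_2$-triples generating the $H_{\{j\}}$) shows $h$ lies in $W$ or in the Lie algebra direction of one of the transverse $\SL_2$ subgroups; in particular $h\in wH_{\cJ'}w\inv$ for some partition $\cJ'$ strictly refining, and $w\in W$. Iterating, and using the classification of closed connected subgroups of $G$ containing $U$ (these are exactly the $W$-conjugates of the $H_{\cJ}$ with $\cJ\in\cC_0$, because $U$ is contained in every $H_J$ with $J\in\cJ$ and its normalizer inside each $G_J$-factor is a Borel), we conclude that $L$ contains a maximal $H'=wH_{\cJ}w\inv$ for some $\cJ\in\cC_0$ and $w\in W$. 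An application of the results of \S5 (closures of $H$-orbits are homogeneous, modulo commensurability issues between the $\Gamma_j$'s) then gives that $H'z$ is closed for each $z\in Z$, and the induction hypothesis applied to the quotients by each block $J_i$ of $\cJ$ forces $Y=H'x=(wH_{\cJ}w\inv)x$.

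The main obstacle will be executing the Margulis trick cleanly in the $p$-adic, multi-factor setting: the weight spaces under $\Ad(U)$ on $\sl_2(\qp)^n$ have different polynomial divergence rates in each of the $n$ factors, and one must arrange that the limit $h$ lies in a prescribed direction (either in $W$, giving the conjugation by $w$, or in an $H_J$-direction, giving an enlargement of $\cJ$) rather than in a mixed direction that does not correspond to any $H_{\cJ}$. Handling this requires choosing the approach direction $g_k\to e$ among the ``slowest'' transverse weights, and then checking that the resulting enlargement of invariance is compatible with the combinatorial structure of $\cC_0$ — the step that ties the dynamical argument to the algebraic classification of subgroups containing $U$.
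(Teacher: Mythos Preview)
Your outline has the right skeleton --- induction on $n$, a $U$-minimal subset $Z$, and the Margulis polynomial-divergence trick --- but it misses the decisive technical ingredient and contains two incorrect claims that would derail the argument.

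The real gap is this. After conjugating $g_k\to e$ by a suitable $u(t_k)$ you land at a nontrivial $h\in \nor(U)$ (in fact in $W$) together with a limit point $y\in Z$, so that $hy\in Y$. To promote this to $hZ\subset Y$ you need $\overline{Uy}=Z$. When $Z$ is a closed orbit of $wH_\cJ w^{-1}$ with $H_\cJ\cong\SL_2(\qp)^m$ and $m\geq 2$ (which is the generic situation once your iteration has enlarged the invariance beyond a single copy of $H$), the set $Z$ is \emph{not} $U$-minimal, and $\overline{Uy}=Z$ holds only if $y$ lands outside the singular set $\cS(U,\Lambda)$ of the $U$-action on $Z\cong H_\cJ/\Lambda$. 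Nothing in your plan arranges this. The paper isolates exactly this point as Theorem~\ref{thm:avoid2} (``uniform recurrence in linear time on the non-singular set''), proves it separately in \S\S6--9 via linearization of the $U$-action near the singular set and polynomial growth estimates, and invokes it at the critical step of the induction. The obstacle you name --- controlling the \emph{direction} of the limit $h$ --- is handled routinely by Lemma~\ref{lema:conj:u}; the genuine difficulty is controlling where the limit \emph{point} $y$ lands.

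Two further problems. Your asserted classification of closed connected subgroups of $G$ containing $U$ is false: $W$, $DU$, $DW$, and every multi-parameter subgroup of $W$ all contain $U$ and are not $W$-conjugates of any $H_\cJ$. The paper does not use any such classification; instead it organizes the iteration around the \emph{maximal} multi-parameter subgroup $V\subset W$ with $Vx'\subset X$ for some $x'\in X$, classifies $V$-minimal sets (Theorems~\ref{thm:DU-minimal} and \ref{thm:multi}), and derives a contradiction with the maximality of $V$. Finally, invoking the results of \S5 is circular: Corollary~\ref{cor:H} there is \emph{deduced from} Theorem~\ref{thm:closure}, not used to prove it.
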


\begin{defn} \label{def:multi-para} \rm
  A {\em multi-parameter\/} subgroup of $W$ is a subgroup of $W$ of
  the form $V=\{w(\vt):\vt\in\frakV\}$, where $\frakV$ is a subspace
  of $\qp^n$. We define $\dim V:=\dim_\qp(\frakV)$.
\end{defn}

\begin{cor} \label{cor:multi-para}
  Given $n\geq 1$, let $G$, $\Gamma$, and the other notation be as
  above. Let $V$ be a multi-parameter subgroup of $W$. Then for any
  $x\in G/\Gamma$, there exists $\cJ\in\cC$ and $w\in W$ such that
  $\cl{Vx}=wH_{\cJ}w\inv x$.
\end{cor}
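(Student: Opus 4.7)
I would reduce Corollary~\ref{cor:multi-para} to Theorem~\ref{thm:closure} in two stages. First, a reduction to the full-support case: letting $I = \{j \in \{1,\ldots,n\} : \exists \vt \in \frakV,\ t_j \neq 0\}$ denote the support of $\frakV$, the group $V$ fixes the factors $j \notin I$ in the decomposition~(\ref{eq:Gamma-prod}), so $\cl{Vx} = \cl{V x_I} \times \{x_{I^c}\}$ where $x = (x_I, x_{I^c})$. Since $G_I$ with $\Gamma_I = \prod_{j\in I} \Gamma_j$ satisfies the same hypotheses with $|I|$ in place of $n$, I reduce to $I = \{1,\ldots,n\}$. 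Note that a partition of $I$ in $\cC_0$ (for the reduced problem) is by definition an element of $\cC$ for the original $n$---this is precisely why the corollary permits $\cJ \in \cC$ rather than $\cJ \in \cC_0$.

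In the full-support case, I would pick a generic $\vt_0 \in \frakV$: one lying outside every proper subspace of the form $\frakV \cap \Span(\mathbf{1}_{J_1}, \ldots, \mathbf{1}_{J_m})$ arising from $\{J_1,\ldots,J_m\} \in \cC$ with $\frakV \not\subset \Span(\mathbf{1}_{J_i})$. As $\cC$ is finite and $\qp$ is infinite, such $\vt_0$ exists; in particular, $\vt_0$ has all coordinates nonzero. The one-parameter subgroup $V_0 = \{w(s\vt_0) : s \in \qp\}$ lies in $V$ and has full support. Applying the natural extension of Theorem~\ref{thm:closure} to $V_0$ yields $\cl{V_0 x} = wH_\cJ w^{-1} x$ for some $w \in W$ and $\cJ \in \cC_0$.

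From the homogeneity of $\cl{V_0 x}$ and the discreteness of the stabilizer, $V_0 \subset wH_\cJ w^{-1}$. Since $V_0 \subset W$ and $wH_\cJ w^{-1} \cap W = H_\cJ \cap W = \{w(\vt) : \vt \in \Span(\mathbf{1}_{J_1}, \ldots, \mathbf{1}_{J_m})\}$ (because $W$ is abelian and $H_\cJ$ meets $W$ only in its standard upper-unipotent part), we conclude $\vt_0 \in \frakV \cap \Span(\mathbf{1}_{J_i})$. By the generic choice of $\vt_0$, this forces $\frakV \subset \Span(\mathbf{1}_{J_i})$, so $V \subset wH_\cJ w^{-1}$ and hence $\cl{Vx} \subset wH_\cJ w^{-1} x = \cl{V_0 x} \subset \cl{Vx}$, yielding equality.

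The main obstacle is the extension of Theorem~\ref{thm:closure} from $U$ to an arbitrary one-parameter unipotent subgroup $V_0$ of $W$ with full support. When the coordinate ratios of $\vt_0$ all lie in $(\qp^\times)^2$, this extension follows from Theorem~\ref{thm:closure} directly by $A$-conjugation: one finds $a \in A$ with $aV_0 a^{-1} = U$ and applies the theorem to $ax$. In general, however, one must either inspect the proof of Theorem~\ref{thm:closure} in Sections~2--4 and verify that its arguments (polynomial divergence of unipotent orbits, uniform recurrence) use only the full-support property of the acting one-parameter subgroup, or handle the finitely many square-classes of $\qp^\times$ via a finite covering argument.
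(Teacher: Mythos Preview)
The paper never supplies a proof of this corollary. Your reduction to a full-support one-parameter subgroup $V_0\subset V$ followed by a genericity argument is sound \emph{provided} the analogue of Theorem~\ref{thm:closure} holds for $V_0$ in place of $U$. You correctly observe that $A$-conjugation carries $V_0$ to $U$ only when the coordinate ratios of $\vt_0$ lie in $(\qp^\times)^2$, and you propose either inspecting the proof or a square-class covering to close the gap.

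Neither patch can succeed, because the corollary is \emph{false} as stated. Take $\qp=\R$, $n=2$, and $V=\{(w_1(t),w_1(-t)):t\in\R\}$. Let $\phi$ denote conjugation by $\smat{-1&\\&1}$ on $\SL_2(\R)$; since $\det\smat{-1&\\&1}=-1$ this is an outer automorphism, and $\phi(w_1(t))=w_1(-t)$. Pick any cocompact lattice $\Gamma_1\subset\SL_2(\R)$ and set $\Gamma_2=\phi(\Gamma_1)$. The subgroup $H':=\{(h,\phi(h)):h\in\SL_2(\R)\}$ contains $V$, and $H'\cap\Gamma=\{(\gamma,\phi(\gamma)):\gamma\in\Gamma_1\}$ is cocompact in $H'$, so $H'x_0$ is compact. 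Under the isomorphism $H'\cong\SL_2(\R)$ the group $V$ becomes the standard horocycle subgroup and $H'\cap\Gamma$ becomes $\Gamma_1$, so Corollary~\ref{cor:n1} gives $\cl{Vx_0}=H'x_0$. But $H'$ is not conjugate to $H$ inside $G$ (such a conjugacy would force $\phi$ to be inner), and comparing Lie algebras at $x_0$ shows $H'x_0\neq wH_{\cJ}w^{-1}x_0$ for every $w\in W$ and $\cJ\in\cC$. The correct version of the corollary would allow conjugates of $H_{\cJ}$ by diagonal elements of $\GL_2(\qp)^n$ rather than only by $W$: one conjugates by such an element to carry $V_0$ to $U$, applies Theorem~\ref{thm:closure} to the conjugated lattice, and pulls back. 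The same square-class issue underlies the unproved assertion ``there exists $a\in A\cap G_J$ such that $U_J\subset aV_1a^{-1}$'' in the proof of Theorem~\ref{thm:multi}.
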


\begin{cor} \label{cor:H}
  For any $x\in G/\Gamma$, there exists $\cJ\in\cC_0$ such that
\begin{equation*}
\cl{Hx}=H_{\cJ} x.
\end{equation*}
\end{cor}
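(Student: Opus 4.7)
The plan is to derive this from Theorem~\ref{thm:closure}. By that theorem, $\cl{Ux}=wH_{\cJ}w^{-1}x$ for some $\cJ\in\cC_0$ and $w\in W$. Since $U\subset H$, the closed $H$-invariant set $\cl{Hx}$ contains $\cl{Ux}$, so
\[
\cl{Hx}\supset H\cdot wH_{\cJ}w^{-1}x.
\]
As $W$ is abelian and $U\subset H_{\cJ}$ (which is exactly what $\cJ\in\cC_0$ means), replacing $w$ by $uw$ with $u\in U$ leaves $wH_{\cJ}w^{-1}$ unchanged, so I may normalize $w\in U^{\perp}$.

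The easy case is $w\in H_{\cJ}$; equivalently, the vector $\vt$ defining $w=w(\vt)$ is constant on each block of the partition $\cJ$. Then $wH_{\cJ}w^{-1}=H_{\cJ}$, and so $\cl{Ux}=H_{\cJ}x$ is closed. Combined with the inclusion $H\subset H_{\cJ}$ (which holds for every $\cJ\in\cC_0$), this forces $Hx\subset H_{\cJ}x\subset\cl{Hx}$, yielding $\cl{Hx}=H_{\cJ}x$ with the same $\cJ$.

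The remaining case $w\notin H_{\cJ}$ is harder: here $wH_{\cJ}w^{-1}$ is a genuinely different subgroup, and the subgroup of $G$ generated by $H$ together with $wH_{\cJ}w^{-1}$ is strictly larger than $H_{\cJ}$. I would identify this generated subgroup as $H_{\cJ'}$, where $\cJ'\in\cC_0$ is the common refinement of $\cJ$ and the level-set partition of $\vt$ on $\{1,\ldots,n\}$, and then show both (i)~that $H_{\cJ'}x$ is closed, using that $\cl{Ux}$ is closed together with the inclusion $wH_{\cJ}w^{-1}\subset H_{\cJ'}$, and (ii)~that the closed $H$-invariant hull of $\cl{Ux}$ actually equals all of $H_{\cJ'}x$.

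The main obstacle is step~(ii) above: showing that the $H$-saturation of $wH_{\cJ}w^{-1}x$ sweeps out the full $H_{\cJ'}$-orbit of $x$ rather than only a proper (possibly dense) subset. I expect this to require the commensurability-of-lattices results that the paper develops in Section~5, since coincidences of orbits $H_{\cJ'}y=H_{\cJ'}y'$ in $G/\Gamma$ are governed precisely by commensurability relations among conjugates of the lattices $\Gamma_j$ in the factors $\SL_2(\qp)$.
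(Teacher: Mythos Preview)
Your easy case ($w\in H_{\cJ}$) is correct and coincides with the endgame of the paper's argument. For the hard case, however, the paper takes a completely different---and much shorter---route than the one you sketch.

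Rather than analysing the subgroup generated by $H$ and $wH_{\cJ}w^{-1}$, the paper varies the unipotent subgroup. For every $h\in H$ it applies Theorem~\ref{thm:closure} to $h^{-1}x$, obtaining $\cl{(hUh^{-1})x}=F_hx$ where $F_h$ is a $G$-conjugate of some $H_{\cJ}$. The decisive input is Lemma~\ref{lema:countable}: there are only countably many closed subgroups $F$ of $G$ that are conjugate to some $H_{\cJ}$ and have $F/(F\cap\Gamma)$ compact, so the family $\{F_h:h\in H\}$ is countable. On the other hand, if $H\not\subset F_h$ then $F_h\cap H$ is a proper algebraic subgroup of $H\cong\SL_2(K)$ containing $hUh^{-1}$; such a subgroup has a unique maximal unipotent, so $F_h$ determines $h$ modulo $N_H(U)$. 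Since $H/N_H(U)$ is uncountable, some $h$ must satisfy $H\subset F_h$, and then Lemma~\ref{lema:wHw} forces $w\in H_{\cJ}$, reducing to the easy case.

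The gap in your route is precisely your step~(ii), and the fix you propose is circular: the density statements you point to in Section~5 (Corollaries~\ref{cor:H:noncomm} and~\ref{cor:H:closure}) are deduced \emph{from} Corollary~\ref{cor:H}, not used to prove it. The one result there with an independent proof, Proposition~\ref{prop:H:comm}, only says ``$H_{\cJ}x_0$ compact $\Rightarrow$ commensurable lattices'' and yields no density of $H$-orbits. Even an induction on $n$ (passing to the compact orbit $H_{\cJ'}x$ when $\lvert\cJ'\rvert<n$) does not close the argument: the case $\lvert\cJ'\rvert=n$, i.e.\ $H_{\cJ'}=G$, is exactly where step~(ii) is needed with no smaller ambient space to fall back on.
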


In order to describe the relation between $H$, $\Gamma_i$'s, and
$\cJ$, we need some definitions.

In a topological group, two infinite discrete subgroups $\Lambda$ and
$\Lambda'$ are said to be {\em commensurable}, if $\Lambda\cap
\Lambda'$ is a subgroup of finite index in both, $\Lambda$ and
$\Lambda'$.

For $i=1,\dots,n$, let $p_i:G\to\SL_2(K)$ denote the projection on the
$i$-the factor. Let $x_0=e\Gamma$ denote the coset of the identity in
$G/\Gamma$. 

\begin{prop} \label{prop:H:comm} Suppose that $H_{\cJ}x_0$ is compact
  for some $\cJ\in\cC_0$. Then for any $J\in\cJ$ and any $i,j\in J$,
  the lattices $p_i(\Gamma_i)$ and $p_j(\Gamma_j)$ in $\SL_2(K)$ are
  commensurable.
\end{prop}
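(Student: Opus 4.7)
The plan is to interpret the compactness of $H_{\cJ}x_0$ as cocompactness of the stabilizer $H_{\cJ}\cap\Gamma$ in $H_{\cJ}$, then exploit the direct-product structure of $H_{\cJ}$ coming from the disjoint blocks of $\cJ$, and finally extract commensurability from a covering-space comparison of covolumes in $\SL_2(\qp)$.

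Since $x_0=e\Gamma$, the $H_{\cJ}$-stabilizer of $x_0$ is $H_{\cJ}\cap\Gamma$, so the hypothesis says that this discrete subgroup of $H_{\cJ}$ is cocompact. Writing $\cJ=\{J_1,\dots,J_m\}$, the blocks are pairwise disjoint and (since $\cJ\in\cC_0$) cover $\{1,\dots,n\}$, yielding the direct-product decomposition $H_{\cJ}=H_{J_1}\times\cdots\times H_{J_m}\cong\SL_2(\qp)^m$; meanwhile $\Gamma=\Gamma_1\cdots\Gamma_n$ is a product across coordinates. Identifying each $\Gamma_i$ with $p_i(\Gamma_i)\subset\SL_2(\qp)$, a tuple $(g_1,\dots,g_n)\in G$ lies in $H_{\cJ}\cap\Gamma$ iff within each block $J_k$ there is a common value $h_k\in\SL_2(\qp)$ with $g_i=h_k$ for all $i\in J_k$ and $h_k\in p_i(\Gamma_i)$ for every $i\in J_k$. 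Setting $\Lambda_k:=\bigcap_{i\in J_k}p_i(\Gamma_i)$, this gives
\[
H_{\cJ}\cap\Gamma\;\cong\;\Lambda_1\times\cdots\times\Lambda_m,
\]
diagonally embedded in $H_{J_1}\times\cdots\times H_{J_m}$.

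Projecting this cocompact lattice onto its $k$-th factor is a continuous surjection $H_{\cJ}/(H_{\cJ}\cap\Gamma)\to\SL_2(\qp)/\Lambda_k$, so each $\Lambda_k$ is itself a cocompact discrete subgroup of $\SL_2(\qp)\cong H_{J_k}$. Now fix $J=J_k\in\cJ$ and $i,j\in J$: the covering $\SL_2(\qp)/\Lambda_k\to\SL_2(\qp)/p_i(\Gamma_i)$ has compact source and discrete fiber $p_i(\Gamma_i)/\Lambda_k$, which is therefore finite, so $[p_i(\Gamma_i):\Lambda_k]<\infty$, and symmetrically $[p_j(\Gamma_j):\Lambda_k]<\infty$. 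Hence $p_i(\Gamma_i)\cap p_j(\Gamma_j)\supseteq\Lambda_k$ has finite index in each of $p_i(\Gamma_i)$ and $p_j(\Gamma_j)$, giving commensurability. I expect no serious obstacle; the one step worth verifying carefully is the identification of $H_{\cJ}\cap\Gamma$ with the direct product $\Lambda_1\times\cdots\times\Lambda_m$, which uses exactly the disjointness of the blocks of $\cJ$ together with the product form $\Gamma=\Gamma_1\cdots\Gamma_n$.
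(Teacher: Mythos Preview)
Your argument is correct. The route differs from the paper's in two places, and in both your version is a bit more direct.

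\emph{Reduction to a single block.} The paper shows that $H_Jx_0$ is compact for each $J\in\cJ$ by a topological trick: it intersects the compact set $H_{\cJ}x_0$ with the closed set $G_Jx_0$, then observes that since $H_J=G_J\cap H_{\cJ}$ and the stabilizer $\Gamma$ is discrete, every $H_J$-orbit in this intersection is open, hence closed, hence compact. You instead use the algebraic product structure directly: the identification $H_{\cJ}\cap\Gamma\cong\Lambda_1\times\dots\times\Lambda_m$ gives $H_{\cJ}/(H_{\cJ}\cap\Gamma)\cong\prod_k \SL_2(\qp)/\Lambda_k$, so compactness of the left side immediately forces each $\Lambda_k$ to be cocompact. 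This avoids the open-orbit argument entirely and is what one might call the ``obvious'' decomposition---the paper's detour is unnecessary here precisely because $\cJ\in\cC_0$ and $\Gamma$ is already a coordinate product.

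\emph{Finite index.} Having reduced to $H$ with $Hx_0$ compact, the paper looks at the fiber $\bar q_j^{-1}(e\Gamma_j)\cap Hx_0$, argues it is compact and countable, hence finite, and then identifies it with $p_j(\Gamma_j)/\bigcap_i p_i(\Gamma_i)$. You instead pass to the covering $\SL_2(\qp)/\Lambda_k\to\SL_2(\qp)/p_i(\Gamma_i)$ and read off finiteness of the fiber. These are essentially the same observation in different clothing. One small point: ``compact source and discrete fiber, therefore finite'' tacitly uses that the fiber is closed; it is, being the preimage of a point in a Hausdorff quotient, so there is no gap. (Alternatively one could just compare covolumes, since both $\Lambda_k$ and $p_i(\Gamma_i)$ are cocompact lattices.)
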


Combining this fact with Corollary~\ref{cor:H} immediately gives the
next result. Note that $H_\cJ=G$ if and only if $\cJ=\{{1},\dots,\{n\}\}$.

\begin{cor} \label{cor:H:noncomm} If $p_i(\Gamma_i)$ and
  $p_i(\Gamma_j)$ are not commensurable for all $i\neq j$ then
  $\cl{H\Gamma}=G$. \qed
\end{cor}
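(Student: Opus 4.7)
The plan is to combine Corollary~\ref{cor:H} with Proposition~\ref{prop:H:comm} applied at the base point $x_0=e\Gamma$, and show the contrapositive-style conclusion that the only partition $\cJ\in\cC_0$ compatible with the non-commensurability hypothesis is the finest one $\{\{1\},\dots,\{n\}\}$, for which $H_\cJ=G$ by the remark just before the statement.

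First I would invoke Corollary~\ref{cor:H} with $x=x_0$ to produce $\cJ\in\cC_0$ such that $\cl{Hx_0}=H_\cJ x_0$. Since each $\Gamma_j$ is cocompact in $G_{\{j\}}$, the isomorphism \eqref{eq:Gamma-prod} shows that $G/\Gamma$ is compact, so the closed subset $H_\cJ x_0=\cl{Hx_0}$ of $G/\Gamma$ is compact. This puts us precisely in the hypothesis of Proposition~\ref{prop:H:comm}, which then guarantees that for every $J\in\cJ$ and every pair $i,j\in J$, the lattices $p_i(\Gamma_i)$ and $p_j(\Gamma_j)$ of $\SL_2(K)$ are commensurable.

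Next I would use the assumption of the corollary: $p_i(\Gamma_i)$ and $p_j(\Gamma_j)$ are non-commensurable whenever $i\neq j$. Combined with the previous step, this forces every block $J\in\cJ$ to be a singleton. Because $\cJ\in\cC_0$ means that the blocks of $\cJ$ are pairwise disjoint and their union is all of $\{1,\dots,n\}$, the only possibility left is $\cJ=\{\{1\},\dots,\{n\}\}$. By the remark preceding the corollary, for this $\cJ$ we have $H_\cJ=G_{\{1\}}\cdots G_{\{n\}}=G$. Hence $\cl{Hx_0}=Gx_0=G/\Gamma$, and lifting back to $G$ via the quotient map yields $\cl{H\Gamma}=G$, as desired.

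No step here is really an obstacle; the only thing to be careful about is making sure that the compactness hypothesis of Proposition~\ref{prop:H:comm} is actually met, which it is for free from the cocompactness of each $\Gamma_j$. All the real work has already been done in Corollary~\ref{cor:H} (which packages Theorem~\ref{thm:closure}) and in Proposition~\ref{prop:H:comm}.
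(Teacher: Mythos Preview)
Your argument is correct and is exactly the approach the paper indicates: it states that the corollary follows immediately by combining Corollary~\ref{cor:H} with Proposition~\ref{prop:H:comm}, together with the remark that $H_\cJ=G$ iff $\cJ=\{\{1\},\dots,\{n\}\}$. You have simply written out those details in full.
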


More generally, we will show the following:

\begin{cor} \label{cor:H:closure} Let $\cJ\in\cC_0$ be the partition
  of $\{1,\dots,n\}$ such that for any $i,j$, we have $i,j\in J$ for
  some $J\in\cJ$ if and only if $p_i(\Gamma_i)$ and $p_i(\Gamma_j)$
  are commensurable. Then $\cl{Hx_0}=H_{\cJ}x_0$.
\end{cor}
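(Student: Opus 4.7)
The plan is to apply Corollary~\ref{cor:H} to write $\overline{Hx_0}=H_{\cJ'}x_0$ for some $\cJ'\in\cC_0$, and then to prove $\cJ'=\cJ$ by showing that the two partitions refine one another. Since $\cJ$ is, by definition, the partition whose parts are the equivalence classes of the commensurability relation on the $p_i(\Gamma_i)$, establishing mutual refinement will force $\cJ'=\cJ$.

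The refinement $\cJ'\preceq\cJ$ is immediate from Proposition~\ref{prop:H:comm}. Each $G_{\{i\}}/\Gamma_i$ is compact by assumption, so $G/\Gamma$ and its closed subset $\overline{Hx_0}=H_{\cJ'}x_0$ are compact; the proposition therefore applies and yields, for every $J'\in\cJ'$ and every $i,j\in J'$, the commensurability of $p_i(\Gamma_i)$ with $p_j(\Gamma_j)$. By the definition of $\cJ$, $i$ and $j$ then lie in the same part of $\cJ$, so every part of $\cJ'$ is contained in a part of $\cJ$.

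The reverse refinement $\cJ\preceq\cJ'$ is where the real work lies and where I expect the main obstacle. Given $i\neq j$ with $p_i(\Gamma_i)$ and $p_j(\Gamma_j)$ commensurable, I consider the natural projection $\pi\colon G/\Gamma\to G_{\{i,j\}}/(\Gamma_i\Gamma_j)$. The image $\pi(Hx_0)=H_{\{i,j\}}\pi(x_0)$ is the diagonal $\SL_2(\qp)$-orbit through the identity coset, and its $H_{\{i,j\}}$-stabilizer is, via the diagonal identification, $\Gamma_i\cap\Gamma_j$, a cocompact lattice in $\SL_2(\qp)$ by commensurability. Hence this orbit is compact and, for dimension reasons ($3$ versus $6$ over $\qp$), a proper closed subset of $G_{\{i,j\}}/(\Gamma_i\Gamma_j)$. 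Continuity of $\pi$ together with compactness of $G/\Gamma$ gives $\pi(\overline{Hx_0})=\overline{\pi(Hx_0)}=H_{\{i,j\}}\pi(x_0)$. Now if $i$ and $j$ lay in distinct parts of $\cJ'$, the part of $\cJ'$ containing $i$ would surject onto the $i$-th factor while projecting trivially onto the $j$-th, the part containing $j$ would do the reverse, and all other parts would project trivially to $G_{\{i,j\}}$; hence $\pi(H_{\cJ'})=G_{\{i,j\}}$, forcing $\pi(H_{\cJ'}x_0)=G_{\{i,j\}}/(\Gamma_i\Gamma_j)$, a contradiction. Therefore $i$ and $j$ share a part of $\cJ'$, which completes $\cJ\preceq\cJ'$ and hence the proof. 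The crux of the argument is the tension between the dimension-based proper inclusion of the diagonal orbit on the $(i,j)$-factor and the fullness of $\pi(H_{\cJ'})$ that would follow from a coarser separation of $i$ and $j$.
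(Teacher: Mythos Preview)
Your proof is correct, but it follows a different path from the paper's. The paper argues constructively: for each $J\in\cJ$ it observes that $\Lambda_J:=\bigcap_{j\in J}p_j(\Gamma_j)$ is a cocompact lattice in $\SL_2(K)$ (by commensurability), so $H_Jx_0\cong\SL_2(K)/\Lambda_J$ is compact, and hence $H_{\cJ}x_0$ is compact; then it applies Corollary~\ref{cor:H:noncomm} \emph{inside} $H_{\cJ}$ (viewed as a product of copies of $\SL_2(K)$ with pairwise non-commensurable lattices $\Lambda_J$) to conclude that $Hx_0$ is dense in $H_{\cJ}x_0$. You instead start from the abstract output $\cJ'$ of Corollary~\ref{cor:H} and pin it down by a two-sided refinement, replacing the appeal to Corollary~\ref{cor:H:noncomm} with an elementary projection-and-dimension argument on the $\{i,j\}$-factor. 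The paper's route is a touch shorter and exhibits the answer directly; your route makes transparent that $\cJ$ is forced purely by the commensurability pattern, without re-entering the machinery behind Corollary~\ref{cor:H:noncomm}. One small remark: your ``dimension $3$ versus $6$'' justification is fine, but the same conclusion follows even more cheaply from countability, since $H_{\{i,j\}}\pi(x_0)=G_{\{i,j\}}/(\Gamma_i\Gamma_j)$ would force the countable set $\Gamma_i\Gamma_j$ to surject onto $H_{\{i,j\}}\backslash G_{\{i,j\}}\cong\SL_2(K)$.
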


\subsection{Singular set for the $U$-action}  
\label{subsec:singular}
In the proof of Theorem~\ref{thm:closure} we will need to understand
the set of points for which the closure of the $U$-orbit is contained
in a closed orbit of a strictly lower dimensional subgroup of $G$.

More precisely, we say that a point $x\in G/\Gamma$ is {\em
  singular\/} (for the $U$-action on $G/\Gamma$) if $Ux\subset
(wH_{\cJ} w\inv)x$ and $(wH_{\cJ} w\inv) x$ is compact for some
$\cJ\in\cC_0$ and $w\in U^\perp$, such that $H_\cJ\neq G$. 

The set of singular points (for the $U$-action on $G/\Gamma$) is
denoted by $\cS(U,\Gamma)$. 

Note that if $n=1$ then $\cS(U,\Gamma)=\emptyset$.

\begin{prop} \label{prop:generic}
  There always exists a non-singular point for the $U$-action on
  $G/\Gamma$; that is $G/\Gamma\neq \cS(U,\Gamma)$.
\end{prop}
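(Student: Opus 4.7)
The plan is to apply Baire category in the compact space $G/\Gamma$ and show that $\cS(U,\Gamma)$ is a countable union of closed nowhere-dense subsets. First I would simplify the singular condition. Since $\cJ\in\cC_0$ means $U\subset H_\cJ$, and since $W$ is abelian with $U,U^\perp\subset W$, we have $w\inv Uw=U\subset H_\cJ$, so $U\subset wH_\cJ w\inv$, and the inclusion $Ux\subset (wH_\cJ w\inv)x$ is automatic for every $w\in U^\perp$. Thus $x$ is singular precisely when $(wH_\cJ w\inv)x$ is compact for some $\cJ\in\cC_0$ with $H_\cJ\neq G$ and some $w\in U^\perp$. Substituting $y=w\inv x$ and using that left translation by $w$ is a homeomorphism of $G/\Gamma$, this becomes compactness of $H_\cJ y$, whence
\[
\cS(U,\Gamma)\;=\;\bigcup_{\cJ}U^\perp Z_\cJ,\qquad Z_\cJ:=\{y\in G/\Gamma:H_\cJ y\text{ is compact}\},
\]
where $\cJ$ runs through the finitely many elements of $\cC_0$ with $H_\cJ\neq G$.

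Next I would establish a dimension count showing that each $U^\perp(H_\cJ y)$ has empty interior. A direct coordinatewise inspection gives $\dim_\qp(U^\perp\cap H_\cJ)=m-1$, where $m=|\cJ|$: the trailing identity of $U^\perp$ kills the block $J\in\cJ$ containing $n$, while each of the remaining $m-1$ blocks contributes exactly one parameter (the common value of $t_j$ for $j\in J$). Hence the product map $U^\perp\times H_\cJ\to G$, $(w,h)\mapsto wh$, has constant-rank differential and
\[
\dim_\qp(U^\perp H_\cJ)=(n-1)+3m-(m-1)=n+2m\leq 3n-2<3n=\dim_\qp G,
\]
since $H_\cJ\neq G$ forces $m\leq n-1$. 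Consequently for every $y$ the set $U^\perp(H_\cJ y)$ is the image of an analytic map of rank at most $n+2m$, hence a $\sigma$-compact analytic subset of codimension at least $2$ in $G/\Gamma$, and in particular has empty interior.

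Finally I would invoke countability of compact $H_\cJ$-orbits. By Proposition~\ref{prop:H:comm} (applied after conjugating $\Gamma$ by $g$), compactness of $H_\cJ\cdot g\Gamma$ forces, within each block $J\in\cJ$ with $|J|\geq 2$, pairwise commensurability of the projections $p_i(g_i\Gamma_ig_i\inv)$ in $\SL_2(\qp)$; the set of $g_i\in\SL_2(\qp)$ realizing such commensurability, when non-empty, is a coset of the commensurator of the lattice in question, which is a countable group. A straightforward bookkeeping modulo the $H_\cJ$-action then bounds the number of compact $H_\cJ$-orbits by a countable cardinal. Combined with the dimension estimate, $\cS(U,\Gamma)$ is a countable union of closed nowhere-dense subsets of the compact Baire space $G/\Gamma$, so $G/\Gamma\setminus\cS(U,\Gamma)$ is dense and in particular nonempty. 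The principal obstacle is precisely this countability of compact $H_\cJ$-orbits, which for a self-contained exposition needs to be verified elementarily (via the commensurator argument above) rather than invoked as a general fact from the theory of finite-volume $H$-orbits.
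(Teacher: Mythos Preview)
Your overall strategy---Baire category applied to a countable union of lower-dimensional pieces---matches the paper's. The paper implements it slightly differently: it first reduces to maximal proper subgroups of the form $F=f\inv G_{J^c}H_Jf$ with $|J|=2$ (Lemmas~\ref{lema:S} and~\ref{lema:X(F)}), notes that $X(F)=WG_{J^c}H_JZ(G)f$ is a proper subvariety, and proves countability of the relevant family by observing that $F\cap\Gamma$ is Zariski dense in $F$ (Borel density for cocompact lattices in $\SL_2$), so each such $F$ equals the Zariski closure of some finite subset of $\Gamma$, of which there are only countably many (Lemma~\ref{lema:cH}; compare also Lemma~\ref{lema:countable}).

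Your route through commensurators reaches the same conclusion but is incomplete as written: you assert that the commensurator of a cocompact lattice $\Lambda\subset\SL_2(K)$ is countable without justification, and this is precisely the nontrivial step. It is true, and one argument runs as follows: for each finite-index $\Lambda'\subset\Lambda$ the identity component of $N_{\SL_2(K)}(\Lambda')$ centralizes the Zariski-dense $\Lambda'$ and hence lies in the finite center, so $N(\Lambda')$ is discrete and $[N(\Lambda'):\Lambda']<\infty$; partitioning the commensurator according to the finite-index subgroup $g\Lambda g\inv\cap\Lambda$ then gives countability. This sits at the same depth as the paper's approach (both ultimately rest on Borel density), but the Zariski-closure formulation is shorter and avoids the detour through normalizers and commensurators. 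Two minor points: the product map $U^\perp\times H_\cJ\to G$ need not have constant rank (though this is irrelevant, since lower source dimension already forces empty interior of the image), and $U^\perp(H_\cJ y)$ need not be closed, so for Baire you should first exhaust $U^\perp$ by compacta.
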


This fact can be proved quickly as follows: There exists a unique
$G$-invariant probability measure $\nu$ on $G/\Gamma$; that is,
$\nu(gE)=\nu(E)$ for any measurable set $E\subset G/\Gamma$ and any
$g\in G$. By Moore's ergodicity theorem, $U$-acts ergodically on
$G/\Gamma$ with respect $\nu$. Since $\nu(E)>0$ for any nonempty open
subset of $G/\Gamma$, by Hedlund's lemma, $\cl{Uy}=G/\Gamma$ for
$\nu$-almost all $y\in G/\Gamma$. Hence $\nu(\cS(U,\Gamma))=0$.

In subsection~\ref{subsec:generic} we will also give a simple proof of
Proposition~\ref{prop:generic} (without using Moore's ergodicity) by
showing that $\cS(U,\Gamma)$ is the image of a union of countably many
algebraic subvarieties of $G$ of strictly lower dimension.

As mentioned before following property of unipotent flows, called {\em
  uniform recurrence in linear time}\/ in \cite{Dani+Mar:Oppen-Invent}, at
the end of the proof of Theorem~\ref{thm:closure}.

\begin{theo} \label{thm:avoid2} Let $x_i\to x$ be a sequence in
  $G/\Gamma$ such that $x\not\in \cS(U,\Gamma)$. Then for any sequence
  $t_i\to\infty$ in $\qp$ and a compact neighbourhood $\frakO$ of $0$
  in $\qp$, there exists $t_i'\in (1+\frakO)t_i$ for every $i\in\N$,
  such that, after passing to a subsequence, $u(t_i')x_i\to y$ for
  some $y\in G/\Gamma\setminus\cS(U,\Gamma)$.
\end{theo}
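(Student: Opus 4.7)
The plan is to argue by contradiction, using the polynomial nature of the $U$-action together with an algebraic (Chevalley-type) linearization of each compact singular orbit. Since each $\Gamma_j$ is cocompact, $G/\Gamma$ is compact, so for any choice $t_i'\in(1+\frakO)t_i$ the sequence $u(t_i')x_i$ has convergent subsequences; if the theorem fails, then for every such choice every sub-sequential limit lies in $\cS(U,\Gamma)$. The singular set decomposes as a countable union $\cS(U,\Gamma)=\bigcup_{k\in\N}Y_k$ of compact orbits $Y_k=(w_kH_{\cJ_k}w_k\inv)g_k\Gamma$, with $\cJ_k\in\cC_0$, $w_k\in U^\perp$ and $H_{\cJ_k}\neq G$. (The countability follows because $\cC_0$ is finite and, for each $\cJ\in\cC_0$ with $H_\cJ\neq G$, the closed $wH_\cJ w\inv$-orbits in $G/\Gamma$ form a countable family, cf.\ the algebraic-subvariety description hinted at after Proposition~\ref{prop:generic}.)

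For each $k$ I would linearize the condition ``$g\Gamma\in Y_k$'' via an algebraic representation $\rho_k:G\to\GL(V_k)$ on a finite-dimensional $\qp$-space, with a vector $v_k\in V_k$ chosen (Chevalley) so that the stabilizer of the line $\qp v_k$ equals the connected component of the normalizer of $w_kH_{\cJ_k}w_k\inv$. The compact orbit $Y_k$ then corresponds to $g\in G$ with $\rho_k(g)v_k$ lying in a fixed proper algebraic subvariety $Z_k\subset V_k$. Choosing lifts $g_i\in G$ with $g_i\Gamma=x_i$ and $g_i\to g$, the map $t\mapsto\rho_k(u(t)g_i)v_k$ is polynomial in $t$ of bounded degree $D_k$ (because $u(t)$ is unipotent algebraic), with coefficients continuous in $g_i$. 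Since $x\notin Y_k$, the limit polynomial $t\mapsto\rho_k(u(t)g)v_k$ is not contained in $Z_k$, and by continuity the same holds for $g_i$ with a uniform lower bound on the relevant coefficient norms. The $\qp$-version of the Dani--Margulis polynomial non-divergence estimate then implies that the Haar measure of $\{t\in(1+\frakO)t_i:\rho_k(u(t)g_i)v_k\text{ lies within }\epsilon\text{ of }Z_k\}$ is at most $C_k\,\epsilon^{1/D_k}\cdot|(1+\frakO)t_i|$, an arbitrarily small fraction of the total.

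A diagonal argument concludes: for each $i$ apply this bound simultaneously for $k=1,\dots,i$ with $\epsilon=\epsilon_i\to 0$ sufficiently slowly, and pick $t_i'\in(1+\frakO)t_i$ with $u(t_i')x_i$ outside the $\epsilon_i$-neighbourhood of every $Y_k$ with $k\le i$. Passing to a convergent subsequence $u(t_i')x_i\to y$, the limit avoids every $Y_k$ and hence lies outside $\cS(U,\Gamma)$, contradicting the hypothesis. The main obstacle is the linearization step: one must choose the representations $\rho_k$, vectors $v_k$ and subvarieties $Z_k$ in a way compatible with the diagonal argument, verify uniform non-triviality of the polynomials $\rho_k(u(\cdot)g_i)v_k$ modulo $Z_k$ using only the hypothesis $x\notin\cS(U,\Gamma)$, and extract explicit degrees $D_k$ and constants $C_k$ so that the sequence $\epsilon_i$ can be chosen to handle infinitely many $Y_k$ at once. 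This is presumably where the substance of \S\S~6--9 resides.
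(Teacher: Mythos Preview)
Your overall strategy---linearize the singular orbits and use polynomial estimates to bound the time the trajectory spends near them---is indeed what the paper does. But the decomposition $\cS(U,\Gamma)=\bigcup_k Y_k$ into \emph{countably many compact orbits} is false, and this breaks your diagonal argument. The parameter $w\in U^\perp$ ranges over an uncountable set, and the conjugates $wH_\cJ w\inv$ form an uncountable family of distinct subgroups (already for $n=2$ and $\cJ=\{\{1,2\}\}$ one has $U^\perp\cap N_G(H)=\{e\}$); each carries its own compact orbit through a given base point, and by Proposition~\ref{prop:disjoint} these orbits are even pairwise disjoint. What \emph{is} countable is the collection $\cH$ of subgroups $F=f\inv G_{J^c}H_Jf$ with $\abs{J}=2$ and $F/(F\cap\Gamma)$ compact (Lemma~\ref{lema:cH}), but the contribution of each such $F$ to the singular set is the non-compact tube $\pi(X(F))=WG_{J^c}H_J\cdot(zfx_0)$ (Lemmas~\ref{lema:S} and~\ref{lema:X(F)}). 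The ``algebraic-subvariety description'' you cite after Proposition~\ref{prop:generic} produces countably many such subvarieties, but each one carries the full $W$-factor and is not compact, so there is no countable list of compact $Y_k$'s to diagonalize over.

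The paper handles the $W$-direction differently. It first recasts the goal as a measure statement (Theorem~\ref{thm:avoid}): any weak-$\ast$ limit $\mu$ of the averaged trajectory measures over $(1+\frakO)t_i$ satisfies $\mu(\cS(U,\Gamma))=0$; Theorem~\ref{thm:avoid2} then follows in one line by picking $y\in\supp\mu\setminus\cS(U,\Gamma)$ and pulling back a shrinking neighbourhood basis of $y$. For the measure statement, projection onto pairs of factors $G_J$ with $\abs{J}=2$ reduces everything to $n=2$ (Section~\ref{sec:reduction}). There the linearization is completely explicit---$G$ acts on $E=\M_2(\qp)$ by $(g_1,g_2)\cdot X=g_1Xg_2\inv$, the group $H$ is the stabilizer of $I$, and the tube $WHy$ corresponds to the affine line $\cW\subset E$---so no general Chevalley construction is needed. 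The uncountable $W$-direction is then absorbed by proving $\mu(CHy)=0$ for each \emph{compact} $C\subset W_1$ and exhausting $W_1$ countably. The key geometric input is that $W_1Hy$ does not self-intersect (Proposition~\ref{prop:disjoint}), which yields Lemma~\ref{lema:inject}: each point of $G/\Gamma$ has at most one representative in a suitable neighbourhood of any compact piece of $\cW$. This injectivity is what converts the local polynomial estimate of Proposition~\ref{prop:avoid} into a global bound, and it plays exactly the role your ``uniform non-triviality modulo $Z_k$'' was meant to play---but it is specific to this explicit representation and has no obvious analogue for an abstract Chevalley vector.
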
 

Note that if $G=\SL_2(\qp)$; that is $n=1$, then
Theorem~\ref{thm:avoid2} is a triviality, because
$\cS(U,\Gamma)=\emptyset$ in this case.

Moreover for proving the Theorem~\ref{thm:closure} for any given $n$,
we will need to use Theorem~\ref{thm:avoid2} only for
$G=\SL_2(\qp)^m$, where $m<n$. 

Therefore the proof of Theorem~\ref{thm:closure} for $n=2$ uses only
the trivial case of Theorem~\ref{thm:avoid2}; that is for $n=1$.

The Theorem~\ref{thm:avoid2} is actually derived as a consequence of a
more general result about limiting distribution of a sequence of
$U$-trajectories on the singular set. Since the techniques of proving
this result are very different from the remaining part of the proof of
Theorem~\ref{thm:closure} we have included all those results in a
second part of this article. In the second part of this article we
will also prove the uniform distribution result assuming Ratner's
description of ergodic $U$-invariant measures. In fact, the first part
of this article uses some of the ideas which have their analogues in
the classification of ergodic invariant measures for the $U$-action.

\section{Preliminaries}

\subsection{A result in ergodic theory}

We recall a result from \cite[Prop.~1.5]{Dani+Raghavan:frames}
\begin{prop} \label{prop:horo}
  For any $x\in G/\Gamma$, the orbit $DWx$ is dense in $G/\Gamma$.
\end{prop}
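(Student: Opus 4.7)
The plan is to exploit the product decomposition of $G/\Gamma$ given in \eqref{eq:Gamma-prod} and reduce to the density of horocyclic orbits on each factor $G_{\{j\}}/\Gamma_j \cong \SL_2(\qp)/\Gamma_j$.

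Writing $x = (x_1,\dots,x_n)$ with $x_j \in G_{\{j\}}/\Gamma_j$, and observing that $W = W_1 \times \cdots \times W_n$ where $W_j := \{(e,\dots,w_1(t),\dots,e) : t \in \qp\}$ is the unipotent subgroup sitting in the $j$-th factor, one has $Wx = W_1 x_1 \times \cdots \times W_n x_n$ inside the product $G/\Gamma$. Since $e \in D$ we have $DWx \supseteq Wx$, so it suffices to prove that each $W_j x_j$ is dense in $G_{\{j\}}/\Gamma_j$.

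For each fixed $j$, I would establish the $p$-adic analogue of Hedlund's minimality theorem: on the compact quotient $\SL_2(\qp)/\Gamma_j$, every orbit of the one-parameter unipotent subgroup $W_j$ is dense. The argument has two stages. First, Moore's ergodicity theorem applied to the non-compact subgroup $W_j \subset \SL_2(\qp)$ implies that $W_j$ acts ergodically on $\SL_2(\qp)/\Gamma_j$ with respect to the $\SL_2(\qp)$-invariant probability measure; Hedlund's lemma then yields density of almost every $W_j$-orbit. Second, one upgrades this to every orbit using cocompactness of $\Gamma_j$: a cocompact lattice in $\SL_2(\qp)$ contains no nontrivial unipotent elements, hence no $W_j$-orbit on $\SL_2(\qp)/\Gamma_j$ is periodic; combined with the $\SL_2$-specific fact that a closed $W_j$-invariant subset of $\SL_2(\qp)/\Gamma_j$ is either the whole space or a finite union of compact $W_j$-orbits, this forces every orbit closure to be the whole space.

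Combining the two steps, $Wx = \prod_j W_j x_j$ is dense in $G/\Gamma = \prod_j \SL_2(\qp)/\Gamma_j$, hence so is $DWx$. The main technical obstacle lies in the last step of the single-factor argument, namely promoting almost-every-orbit density to every-orbit density, which genuinely relies on $\SL_2$-specific structural input (absence of unipotent elements in cocompact lattices together with the classification of closed $W_j$-invariant subsets of $\SL_2(\qp)/\Gamma_j$). The rest is either the product reduction or classical ergodic theory, and the subgroup $D$ ends up being an ornament --- density of $Wx$ alone already suffices.
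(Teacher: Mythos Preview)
Your route is genuinely different from the paper's. The paper does not reduce to the factors; it works directly on $G/\Gamma$ for arbitrary $n$ via a horospherical argument. One picks $a=d(\alpha)$ with $\abs{\alpha}>1$; by Mautner's phenomenon $a$ acts ergodically on $G/\Gamma$, so Hedlund's lemma supplies some $y$ with dense forward $a$-orbit. Given $x$ and any target $z$, one finds $y_k\to x$ along this orbit with $a^{i_k}y_k\to z$, writes $y_k=g_kx$, factors $g_k=\trn{[w(\vs_k)]}\,d_k\,w(\vt_k)$ via the decomposition $\Lie(G)=\trn{[\Lie(W)]}\oplus\Lie(A)\oplus\Lie(W)$, and observes that conjugation by $a^{i_k}$ contracts the $\trn{[\Lie(W)]}$-piece to the identity, leaving an element of $WD$ that carries $x$ arbitrarily close to $z$. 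Here $D$ is essential, not ornamental: the $a$-orbit is what reaches $z$, while $W$ only absorbs the expanded direction.

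Your approach instead invokes minimality of the horocycle flow on each factor $\SL_2(\qp)/\Gamma_j$. That statement is correct, but it is strictly harder than Proposition~\ref{prop:horo}: in the paper it is Corollary~\ref{cor:n1}, and its proof \emph{uses} Proposition~\ref{prop:horo} (for $n=1$) together with the analysis of $U$-minimal sets in Theorem~\ref{thm:U-minimal}. So inside the paper your argument is circular, and standing alone your ``second stage'' --- the classification of closed $W_j$-invariant subsets --- is precisely the hard step you have deferred rather than proved; the standard proofs of that classification themselves pass through density of $DW$-orbits. The moral is that $D$ is doing real work in the proposition, and the paper's horospherical trick is the more economical path.
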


\begin{proof} 
  Take any $\alpha\in\qp$ such that $\abs{\alpha}_p>1$ and let
  $a=d(\alpha)$. By Mautner's Phenomenon
  (see~\cite{Mautner,Bekka:book}), $a$ acts ergodically on $G/\Gamma$.
  Therefore by Hedlund's lemma there exists $y\in G/\Gamma$ such that
  \begin{equation} \label{eq:dense}
    \cl{\{a^i:i>0\}y}=G/\Gamma.
  \end{equation}

  Let a sequence $\{y_k\}\in \{a^i:i>0\}y$ be such that $y_k\to x$ as
  $k\to\infty$.
  
  Let $z\in G/\Gamma$ be given. Then by \eqref{eq:dense} there exists a
  sequence $i_k\to\infty$ such that $a^{i_k}y_k\to z$, as $k\to\infty$.
  
  Let a sequence $g_k\to e$ in $G$ be such that $y_k=g_kx$ for all $k$.
  Since $\Lie(G)=\trn{[\Lie(W)]}\oplus \Lie(A)\oplus\Lie(W)$, there
  exist sequences $\vs_k\to 0$ and $\vt_k\to 0$ in $\qp^n$, $d_k\to
  e$ in $A$, and a $k_0\in\N$ such that
  \begin{equation*}
    g_k=\trn{[w(\vs_k)]}d_kw(\vt_k), \qquad \forall k\geq k_0.
  \end{equation*}
  
  Therefore $a^{i_k}y_k=(a^{i_k}g_ka^{i_k})a^{i_k}y_k$. Now
  \begin{equation*}
    a^{i_k}g_ka^{-i_k}=\trn{[w(\alpha^{-2i_k}\vs_k)]}d_kw(\alpha^{2i_k}\vt_k)
    =: \delta_k w_k,\qquad \forall k\geq k_0,
  \end{equation*}
  where $\delta_k:=\trn{[w(\alpha^{-2i_k}\vs_k)]}d_k$ and
  $w_k:=w(\alpha^{2i_k}\vt_k)\in W$. Thus
  $a^{i_k}y_k=\delta_kw_ka^{i_k}x$ and $\delta_k\to e$. Therefore
  \begin{equation*}
    w_k a^{i_k}x=\delta_k\inv (a^{i_k}y_k)\to\lim_{k\to\infty} a^{i_k}y_k=z.
  \end{equation*}
  Thus $z\in \cl{WDx}=\cl{DWx}$. This shows that $G/\Gamma\subset
  \cl{DWx}$.
\end{proof}

The proofs of Mautner's phenomenon and Hedlund's lemma are very nice
and short \cite{Bekka:book}. The above result deviates from the
classical ergodic theory results in one essential way; namely it tells
something about the dynamical property of each individual orbit,
rather than of almost every orbit. It is due to this reason we are
able use the above result for problems in number theory.

\subsection{Basic lemmas on minimal sets for group actions}

In this subsection let $G$ be a locally compact second countable
topological group acting continuously on a topological space $\Omega$.
For a subgroup $F$ of $G$, a subset $X$ of $\Omega$ is called {\em
  $F$-minimal\/} if $X$ is closed, $F$-invariant, and does not contain
any proper closed $F$-invariant subset. Thus if $X$ is $F$-minimal
then $\cl{Fx}=X$ for every $x\in X$. By Zorn's lemma, any compact
$F$-invariant subset of $\Omega$ contains an $F$-minimal subset.

\begin{lema}[Margulis~\cite{Mar:Oppen-simple}] 
  \label{lema:top1}
  Let $F$, $P$ and $P'$ be subgroups of $G$ such that $F\subset P\cap
  P'$.  Let $Y$, $Y'$ be closed subsets of $\Omega$, and $M\subset G$ be
  any set. Suppose that
  \begin{enumerate}
  \item $PY\subset Y$, $P'Y'\subset Y'$,
  \item $mY\cap Y'\neq\emptyset$ for all $m\in M$, and
  \item $Y$ is compact and $F$-minimal.
  \end{enumerate}
  Then $gY\subset Y'$ for all $g\in \nor(F)\cap\cl{P'MP}$.
  
  In particular, if $Y'=Y$ then $Y$ is invariant under the closed
  subgroup generated by $\nor(F)\cap\cl{P'MP}$.
\end{lema}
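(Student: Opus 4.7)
My plan is to prove $gY \subset Y'$ for $g \in \nor(F) \cap \cl{P'MP}$ in two stages: first I would produce a single point $w \in Y$ with $gw \in Y'$, and then I would propagate this to all of $gY$ using $F$-minimality together with the normalizer condition.

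For the first stage, I would write $g = \lim_i p'_i m_i p_i$ with $p'_i \in P'$, $m_i \in M$, $p_i \in P$. Hypothesis (2) lets me choose, for each $i$, a point $y_i \in Y$ with $m_i y_i \in Y'$. The key algebraic identity is
\[
  p'_i m_i y_i \;=\; (p'_i m_i p_i)\,(p_i\inv y_i).
\]
Since $P$ is a subgroup, $p_i\inv \in P$, and (1) forces $p_i\inv y_i \in PY \subset Y$; compactness of $Y$ yields, along a subsequence, $p_i\inv y_i \to w \in Y$. On the other hand $p'_i m_i y_i \in P'Y' \subset Y'$ by (1). Passing to the limit in the displayed identity and using that $Y'$ is closed gives $gw \in Y'$.

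For the second stage, since $g \in \nor(F)$ one has $gFg\inv = F$. I would then exploit continuity of the $g$-action and $F$-minimality of $Y$ (which gives $Y = \cl{Fw}$) to compute
\[
  gY \;=\; g\,\cl{Fw} \;=\; \cl{gFw} \;=\; \cl{(gFg\inv)(gw)} \;=\; \cl{F(gw)}.
\]
Since $gw \in Y'$, $F \subset P'$, and $P'Y' \subset Y'$, the orbit $F(gw)$ lies in the closed set $Y'$, whence so does its closure. For the ``in particular'' clause I would specialize to $Y'=Y$: each such $g$ sends $Y$ into itself, whereupon $gY$ is a closed, nonempty, $F$-invariant subset of the $F$-minimal set $Y$, forcing $gY=Y$. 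Thus every element of $\nor(F) \cap \cl{P'MP}$ lies in the closed subgroup $\{h \in G : hY = Y\}$, and so does the closed subgroup it generates.

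The only nontrivial step is locating the single $w \in Y$ with $gw \in Y'$; everything else is a formal consequence. The crucial trick is the factorization $p'_i m_i y_i = (p'_i m_i p_i)(p_i\inv y_i)$, which simultaneously uses the $P$-invariance of $Y$ (to keep $p_i\inv y_i$ in the compact set $Y$, so a subsequence converges) and the $P'$-invariance of $Y'$ (to keep $p'_i m_i y_i$ inside the closed set $Y'$, so its limit stays there). Without compactness of $Y$ no convergent subsequence could be extracted, and without the normalizer hypothesis the propagation from a single point $w$ to all of $gY$ would fail.
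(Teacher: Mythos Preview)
Your proof is correct and follows essentially the same approach as the paper's: the same factorization $p'_i m_i y_i = (p'_i m_i p_i)(p_i^{-1} y_i)$, the same compactness extraction, and the same use of the normalizer condition to propagate from one point to all of $gY$. Your treatment of the ``in particular'' clause is in fact slightly more explicit than the paper's, correctly noting that $gY$ is a closed nonempty $F$-invariant subset of the $F$-minimal set $Y$ and hence equals $Y$.
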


\begin{proof} 
  Let $g\in \cl{P'MP}$. There exist sequences $\{p'_i\}\subset
  P'$, $\{m_i\}\subset M$, and $\{p_i\}\subset P$ such that
  $p'_im_ip_i\to g$ as $i\to\infty$.
  
  By 2), for each $m_i$ there exists a $y_i\in Y$ such that $m_iy_i\in
  Y'$.  Since $\{p_i\inv y_i\}\subset Y$ and $Y$ is compact,
  by passing to subsequences, we may assume that $p_i\inv y_i\to y$ for
  some $y\in Y$.  Now $\{p'_im_iy_i\}\subset Y'$. Therefore as
  $i\to\infty$,
  \begin{equation*}
    p'_im_iy_i=(p'_im_ip_i)(p_i\inv y_i) \to gy\in Y'.
  \end{equation*}
  
  Further if $g\in N_G(F)$, then
  \begin{equation*}
    Y'\supset \cl{Fgy}=\cl{gFy}=g\cl{Fy}=gY,
  \end{equation*}
  where $\cl{Fy}=Y$ because $Y$ is $F$-minimal.
\end{proof}

\begin{lema}[Margulis~\cite{Mar:Oppen-simple}]
  \label{lema:top2}
  Assume that $G$ acts transitively on $\Omega$. Let $F$ and $P$,
  where $F\subset P$, be a closed subgroups of $G$, and $Y$ be a
  compact $F$-minimal subset of $\Omega$. Suppose there exists $y\in
  Y$ and a neighbourhood $\Phi$ of the identity in $G$ such that
  \begin{equation} 
    \label{eq:open} 
    \{g\in\Phi:gy\in Y\}\subset P.
  \end{equation} 
  Then $\cl{\eta(F)}$ is compact in $P/P_y$, where $P_y=\{g\in P:gy=y\}$
  and $\eta:P\to P/P_y$ is the natural quotient map.
\end{lema}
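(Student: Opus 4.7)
The plan is to establish sequential compactness of $\eta(F)$ in $P/P_y$: given any sequence $\{f_i\}\subset F$, I will produce a subsequence along which $\eta(f_i)$ converges in $P/P_y$. Since $F\subset P$ and $Y$ is $F$-invariant, each $f_iy\in Y$, so by compactness of $Y$ we may pass to a subsequence with $f_iy\to z$ for some $z\in Y$.

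Next, fix $i_0$ large enough that $f_{i_0}y$ is close to $z$, and let $\Phi'\subset\Phi$ be a compact neighbourhood of $e$ in $G$ (such a $\Phi'$ exists by local compactness of $G$). By continuity of the action, $f_{i_0}\inv f_iy\to y$ as $i\to\infty$; and since the orbit map $G\to\Omega$, $g\mapsto gy$, is open (a standard fact for $\sigma$-compact groups acting transitively), $\Phi'y$ is a neighbourhood of $y$ in $\Omega$. Hence for all $i$ sufficiently large, $f_{i_0}\inv f_iy\in\Phi'y$, so we may choose $\gamma_i\in\Phi'$ with $\gamma_iy=f_{i_0}\inv f_iy$; by compactness of $\Phi'$, after passing to a further subsequence, $\gamma_i\to\gamma_\infty$ in $G$.

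The key step is to invoke the hypothesis \eqref{eq:open}. Because $f_{i_0}\inv f_i\in F$ and $Y$ is $F$-invariant, $\gamma_iy=f_{i_0}\inv f_iy\in Y$; together with $\gamma_i\in\Phi$, this forces $\gamma_i\in P$. Set $b_i:=\gamma_i\inv f_{i_0}\inv f_i$; then $b_iy=y$, so $b_i\in G_y$, and moreover $b_i\in P$ since both $\gamma_i\inv$ and $f_{i_0}\inv f_i$ lie in $P$. Hence $b_i\in P_y$, $f_i=f_{i_0}\gamma_ib_i$, and $\eta(f_i)=f_{i_0}\gamma_iP_y$. Since $P$ is closed in $G$ and $\gamma_i\in P$, the limit $\gamma_\infty$ also lies in $P$, and therefore $\eta(f_i)\to\eta(f_{i_0}\gamma_\infty)$ in $P/P_y$, yielding the required subsequential convergence.

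The main (mild) subtle point is the dual role played by $\gamma_i$: it must lie in the compact set $\Phi'$ so that a convergent subsequence can be extracted, while simultaneously satisfying $\gamma_iy\in Y$ so that \eqref{eq:open} is applicable. Both conditions arise naturally once we pre-multiply by the $F$-element $f_{i_0}\inv$, which brings $f_{i_0}\inv f_iy$ close to $y$ while keeping it inside $Y$.
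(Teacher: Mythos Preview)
There is a genuine gap. Your claim that $f_{i_0}^{-1}f_iy\to y$ as $i\to\infty$ is false: for a \emph{fixed} $i_0$ the map $f_{i_0}^{-1}$ is a fixed homeomorphism of $\Omega$, so $f_{i_0}^{-1}f_iy\to f_{i_0}^{-1}z$, and there is no reason for $f_{i_0}^{-1}z$ to equal $y$. You seem to argue that since $f_{i_0}y$ is close to $z$, continuity of $f_{i_0}^{-1}$ forces $f_{i_0}^{-1}z$ close to $f_{i_0}^{-1}(f_{i_0}y)=y$; but ``close'' here is relative to neighbourhoods that depend on $f_{i_0}$, and you have no uniform control as $i_0$ grows. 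Concretely, what you would need is some $i_0$ with $z$ in the interior of $f_{i_0}\Phi' y$, and the convergence $f_{i_0}y\to z$ alone does not furnish this, because the open sets $f_{i_0}\Phi' y$ may shrink (in the relevant direction) faster than $f_{i_0}y$ approaches $z$.

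A structural symptom of the problem is that your argument never uses the $F$-\emph{minimality} of $Y$, only its compactness and $F$-invariance. In the paper's proof minimality is exactly what fills this gap: since $\overline{Fz}=Y\ni y$, one can pick $f\in F$ with $fz\in\Phi y$; then \eqref{eq:open} yields $fz=p'y$ for some $p'\in P$, hence $z=py$ with $p:=f^{-1}p'\in P$. Now $p^{-1}f_iy\to p^{-1}z=y$ \emph{genuinely}, and a second application of \eqref{eq:open} gives $p^{-1}f_iy=p_iy$ with $p_i\to e$ in $P$, so $\eta(f_i)=\eta(pp_i)\to\eta(p)$. The essential point you are missing is establishing $z\in Py$; minimality provides this, while choosing $i_0$ with $f_{i_0}y$ close to $z$ does not.
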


\begin{proof} 
  It is enough to show that given a sequence $\{f_i\}\subset F$,
  the sequence $\{\eta(f_i)\}$ has a convergent subsequence.
  
  To show this, we note that after passing through a subsequence,
  $f_iy\to z$ for some $z\in Y$. Since $\Omega$ is a homogeneous space
  of $G$, $\Phi y$ is a neighbourhood of $y$ in $\Omega$. Now since
  $Y$-is $F$-minimal, $Fy$ is dense in $Y$, and hence there exists $f\in
  F$ such that $fz\in \Phi y$. Therefore by \eqref{eq:open}, $fz=p'y$
  for some $p'\in P$. Hence $z=py$, where $p=f\inv p'\in P$. Thus
  $f_iy\to py$. Hence $(p\inv f_i)y\to y$.
  
  Again by \eqref{eq:open} there exists a sequence $p_i\to e$ in $P$
  such that $(p\inv f_i)y=p_iy$ for all large $i$. Thus $f_iy=pp_iy$;
  and hence $f_i\inv pp_i\in P_y$ for all large $i$. Therefore
  $\eta(f_i)=\eta(pp_i)\to \eta(p)$ as $i\to\infty$.
\end{proof}

\subsection{Limit set of a sequence of unipotent trajectories on a 
vector space}

Later after applying Lemma~\ref{lema:top1}, we will proceed further
using the following result.

\begin{prop}
  \label{prop:UMU}
  Let $M\subset G\setminus \nor(U)$ such that $e\in \cl{M}$. Then the
  closure of the subgroup generated by $\cl{UMU}\cap \nor(U)$ contains
  either $wDw\inv$ for some $w\in W$, or a nontrivial one-parameter
  subgroup of $U^\perp$.
\end{prop}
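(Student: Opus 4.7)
The plan is to use the polynomial-divergence technique for unipotent flows (Margulis's method). First, since $e\in\cl{M}$, we trivially have $u(t)=u(t)\cdot e\cdot u(0)\in\cl{UMU}$ for every $t\in\qp$, so $U\subset\cl{UMU}\cap\nor(U)$ is automatic; I only need to produce one additional limit element in $\cl{UMU}\cap\nor(U)$ which, jointly with $U$, generates either a one-parameter subgroup of $U^\perp$ or $wDw\inv$ for some $w\in W$.

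Writing $m_k\to e$ in $M$ with each $m_k\notin\nor(U)$, and $m_k=\exp(X_k)$, decompose $\Lie(G)$ under $\Ad(D)$ into weight spaces $V_+\oplus V_0\oplus V_-$ with bases $\{X_0^{(i)}\}$, $\{H_0^{(i)}\}$, $\{Y_0^{(i)}\}$ coming from the standard $\mathfrak{sl}_2$-triples in each factor. A direct bracket computation shows that $\Lie(\nor(U))=V_+\oplus\qp\cdot(H_0,\ldots,H_0)$, so $X_k\notin\Lie(\nor(U))$ forces either some $Y_0^{(i)}$-coefficient $c_k^{(i)}$ to be nonzero, or (when all $c_k^{(i)}=0$) the $H_0^{(i)}$-coefficients $b_k^{(i)}$ not to be all equal. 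The central formula
\begin{equation*}
\Ad(u(s))(aX_0^{(i)}+bH_0^{(i)}+cY_0^{(i)})=(a-2sb-s^2c)X_0^{(i)}+(b+sc)H_0^{(i)}+cY_0^{(i)}
\end{equation*}
is a polynomial of degree $\leq 2$ in $s$. After passing to a subsequence in which all coordinate ratios stabilise, I would pick $i^*$ attaining $\max_i|c_k^{(i)}|$, set $\mu_i=\lim c_k^{(i)}/c_k^{(i^*)}$ (so $|\mu_i|\leq 1$), and for each free parameter $\lambda\in\qp$ choose $s_k=s_k(\lambda)$ with $s_k^2c_k^{(i^*)}\to\lambda$. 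The limit calculation then yields $u(s_k)m_ku(s_k)\inv\to\exp(-\lambda\sum_i\mu_iX_0^{(i)})\in W\subset\nor(U)$: the $V_-$-part of the limit vanishes because $c_k^{(i)}\to 0$, while the $V_0$-part vanishes because $s_kc_k^{(i)}=(s_k^2c_k^{(i)})/s_k\to 0$. Multiplying on the right by $u(\lambda\mu_n)\in U$ further translates this into $U^\perp$, giving $w(\lambda(\mu_n-\mu_1,\ldots,\mu_n-\mu_{n-1},0))\in\cl{UMU}\cap U^\perp$.

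Varying $\lambda\in\qp$ and passing to the closed subgroup generated, this traces out a one-parameter subgroup of $U^\perp$ whenever $\vec\mu$ is not a scalar multiple of $(1,\ldots,1)$, yielding the second alternative. The main obstacle is the degenerate subcase $\vec\mu=(1,\ldots,1)$: then $(X_k)_-\approx c_k^{(i^*)}(Y_0,\ldots,Y_0)\in\Lie(H)$, so $m_k$ sits ``asymptotically inside $H\cong\SL_2(\qp)$'' and the limit above collapses into $\exp(-\lambda X_0^{\mathrm{diag}})=u(-\lambda)\in U$, contributing nothing new. In this subcase I would apply the classical $\SL_2(\qp)$-level divergence argument to the leading diagonal piece of $m_k$: for $h=\exp(cY_0^{\mathrm{diag}})$ with $c\to 0$, the matrix computation shows that $u(s)\,h\,u(t)$ with $s=(\alpha-1)/c$ and $t=(\alpha\inv-1)/c$ converges to $d(\alpha)\in D$ for every $\alpha\in\qp^\times$, so $D\subset\cl{UMU}$ in the case $m_k\in H$; any subleading transverse perturbation of $X_k$ off $\Lie(H)$ is then absorbed into a small conjugation by some $w\in W$, giving $wDw\inv$ and thus the first alternative. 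The parallel case where all $c_k^{(i)}=0$ runs analogously with a linear rather than quadratic polynomial in $s$, and its potential degenerate subcase does not occur because $(X_k)_0\notin\Lie(D)$ is precisely the non-alignment of the $b$-coordinates.
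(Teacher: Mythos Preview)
Your overall strategy---produce limits in $\cl{UMU}\cap\nor(U)$ by renormalised conjugation, then analyse what they generate---is the right instinct, but the execution has a concrete gap that breaks the argument.

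\textbf{The scaling is wrong.} You pick $s_k$ so that $s_k^2 c_k^{(i^*)}\to\lambda$, and then assert that the $V_+$-coordinate $a_k^{(i)}-2s_kb_k^{(i)}-s_k^2c_k^{(i)}$ converges to $-\lambda\mu_i$. But nothing forces $s_kb_k^{(i)}\to 0$: if, say, $|b_k^{(i)}|$ is of order $|c_k^{(i^*)}|^{1/2}$ or larger, then $s_kb_k^{(i)}$ stays bounded away from~$0$ or diverges, and $u(s_k)m_ku(s_k)^{-1}$ need not converge at all. The same issue infects the degenerate subcase $\vec\mu=(1,\dots,1)$: your computation of $u(s)hu(t)\to d(\alpha)$ is correct when $m_k$ lies exactly in $H$, but for $m_k$ merely close to $H$ the $(1,2)$-entries pick up terms like $(\alpha-1)\,b_k^{(i)}/c_k^{(i^*)}$ and $(\alpha^{-1}-1)(\alpha-1)\,(c_k^{(i)}-c_k^{(i^*)})/(c_k^{(i^*)})^2$, neither of which is controlled. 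The claim that these are ``absorbed into a small conjugation by some $w\in W$'' would require the limit to have the specific form $w((1-\alpha^2)\vs)\,d(\alpha)$ for a \emph{fixed} $\vs$, which does not follow. Finally, your assertion that the degenerate subcase cannot occur when all $c_k^{(i)}=0$ is false: the $b_k^{(i)}$ not being all equal for each $k$ does not prevent the ratios $b_k^{(i)}/b_k^{(i^*)}$ from all tending to~$1$.

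\textbf{How the paper proceeds.} The paper avoids all of this by working not in the Lie algebra but in the linear representation $E=\M_2(\qp)^{n-1}\times\qp^2$ with basepoint $\vp=(I_{n-1};I_0)$, chosen so that $U=\mathrm{Stab}(\vp)$ and $\nor(U)=\{g:g\cdot\vp\in L\}$ for $L$ the $U$-fixed subspace. The crucial rescaling (Lemma~\ref{lema:conj:u}) chooses $t_i$ so that $\max\{|(d_i(j)-a_i(j))t_i|,\,|c_i(j)t_i^2|\}$ is bounded for \emph{every} coordinate, guaranteeing convergence $u(st_i)\cdot X_i\to(w_{n-1}(\psi(s));\phi(s)I_0)$ with $\psi$ polynomial of degree~$\le2$ and $\phi$ linear, at least one nonconstant. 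This yields a one-parameter family $\Phi(s)=w(\psi'(s))d(\phi(s))\in DU^\perp\cap\cl{UMU}$, and Lemma~\ref{lema:DU-perp} then shows that any closed subgroup generated by such a family contains either $vDv^{-1}$ or a nontrivial one-parameter subgroup of $U^\perp$. The extra $\qp^2$-factor is exactly what supplies the $D$-component that your conjugation-only approach has to recover by a separate (and, as written, incomplete) ad~hoc argument.
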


The proof of this proposition is based on the following general result
\cite{Mar:Oppen-Oslo,Dani+Mar:Oppen-Invent}: Let $V$ be a finite
dimensional vector space over $\qp$ and $U=\{u(t)\}_{t\in\qp}$ be a
nontrivial one-parameter unipotent subgroup of $\GL(V)$ and $\{p_i\}$
be a sequence of points in $V$ such that each of the trajectories
$\{u(t)p_i\}_{t\in\qp}$ is non-constant. Let $L$ denote the space of
$U$-fixed vectors in $V$. Now if $p_i\to p$ for some $p\in L$ then,
after passing to a subsequence, the following holds: there exist a
sequence $t_i\to\infty$ in $\qp$ and a non-constant polynomial map
$\phi:\qp\to V$ such that for any $s\in\qp$, we have $u(st_i)p_i\to
\phi(s)$ as $i\to\infty$.

We will prove this only for the cases needed for our purpose. 

Let $V=\qp^2$ and consider the standard linear action of $\{w_1(t)\}$
on $\qp^2$.  Let $I_0=\smat{1\\ 0}$. Then $L_0=\{tI_0:t\in\qp\}$ is
the space of $\{w_1(t)\}$-fixed vectors.

\begin{lema} \label{lema:qp2}
  Let $\{p_i\}\subset\qp^2\setminus L_0$ be a sequence such that
  $p_i\to I_0$ as $i\to\infty$. Then, after passing to a subsequence,
  there exists a sequence $t_i\to\infty$ such that the following
  holds: Then for any $s\in\qp$,
  \begin{equation*}
    \lim_{i\to\infty} w_1(st_i)\cdot p_i= (1+s)I_0.
  \end{equation*}
\end{lema}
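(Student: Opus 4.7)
The plan is essentially a direct computation. Write each $p_i$ in coordinates as $p_i=\smat{a_i\\ b_i}$. The hypothesis $p_i\to I_0=\smat{1\\0}$ gives $a_i\to 1$ and $b_i\to 0$, while the hypothesis $p_i\notin L_0$ forces $b_i\neq 0$ for every $i$.

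Next, I would explicitly compute the action: since $w_1(t)=\smat{1 & t\\ 0 & 1}$, we have
\begin{equation*}
w_1(t)\cdot p_i = \begin{pmatrix} a_i + t b_i \\ b_i \end{pmatrix}.
\end{equation*}
The goal is to choose $t_i$ so that substituting $t=st_i$ makes the first coordinate converge to $1+s$ for every $s\in\qp$ and the second coordinate converge to $0$. The second coordinate condition is automatic from $b_i\to 0$, so the only condition is $a_i + st_ib_i \to 1+s$, which (since $a_i\to 1$) reduces to $t_ib_i\to 1$.

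The natural choice is therefore $t_i := b_i^{-1}$, which is legitimate because $b_i\neq 0$. Since $b_i\to 0$, we get $\abs{t_i}\to\infty$, so $t_i\to\infty$ in $\qp$. With this choice,
\begin{equation*}
w_1(st_i)\cdot p_i = \begin{pmatrix} a_i + s \\ b_i \end{pmatrix} \longrightarrow \begin{pmatrix} 1+s \\ 0 \end{pmatrix} = (1+s)I_0,
\end{equation*}
which is the desired limit.

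There is essentially no obstacle here: the statement is designed as the simplest nontrivial instance of the general principle that $U$-orbits through a sequence approaching the fixed-point set are, after suitable reparametrization, well-approximated by nontrivial polynomial curves in the fixed-point set. In this $\qp^2$ case the polynomial is linear ($\phi(s)=(1+s)I_0$) and the rescaling factor $t_i$ is simply the reciprocal of the distance of $p_i$ from the fixed line $L_0$. No passage to a subsequence is actually required, though the statement allows it.
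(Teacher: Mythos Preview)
Your proof is correct and is essentially identical to the paper's own proof: write $p_i=\smat{a_i\\ b_i}$, set $t_i=b_i^{-1}$, and compute $w_1(st_i)p_i=\smat{a_i+s\\ b_i}\to (1+s)I_0$. Your additional remarks (that $a_i\to 1$, $b_i\to 0$, and that no subsequence is actually needed) are accurate and just make explicit what the paper leaves implicit.
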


\begin{proof} 
  Write $p_i=\smat{a_i \\ b_i}$, $\forall i$. Since $p_i\not\in L_0$,
  $b_i\neq 0$, $\forall i$. Put $t_i=b_i\inv$. Then for any $s\in\qp$,
  as $i\to\infty$,
  \begin{equation*}
    w_1(st_i)\smat{a_i \\ b_i}=\smat{a_i+s \\ b_i}\to
    \smat{1+s \\ 0}.
  \end{equation*}
\end{proof}

Let $I_1=\smat{1 & \\ & 1}$. For $1\leq m\leq n$,
put
\begin{equation} \label{eq:E_m}
  \begin{array}{ll}
    E_m&=M_2(\qp)^m \\ 
    I_m&=(I_1,\ldots,I_1)\in E_m \\
    w_m(t)&=(w_1(t),\ldots,w_1(t))\in \SL_2(\qp)^m \\
    L_m&=\{X\in E_m: w_m(t) X w_m(-t)=X,\, \forall t\in\qp\}=(L_1)^m,
  \end{array}
\end{equation}

\begin{lema}[Margulis] \label{lema:conj:u}
  Let $\{X_i\}\subset E_m\setminus L_m$ be a sequence such that
  $X_i\to I_m$ as $i\to\infty$. Then after passing to a subsequence,
  there exist a sequence $t_i\to\infty$, and a nonconstant polynomial
  map $\psi:\qp\to\qp^m$ of degree at most $2$ such that given any
  $s\in\qp$ and a sequence $s_i\to s$ in $\qp$,
  \begin{equation} \label{eq:cor:conj}
    \lim_{i\to\infty} w_m(s_it_i)X_i
    w_m(-s_it_i)=w_m(\psi(s)).
  \end{equation}
  In particular, $\psi(0)=0$.
\end{lema}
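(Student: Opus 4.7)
The plan is to make the conjugation formula on $M_2(\qp)$ explicit and then choose $t_i\in\qp$ so that the offending coefficients of the resulting polynomial in $s$ stay bounded. Writing $X_i^{(j)}=I_1+Y_i^{(j)}$ with $Y_i^{(j)}=\smat{a_i^{(j)}&b_i^{(j)}\\ c_i^{(j)}&d_i^{(j)}}\to 0$ in $M_2(\qp)$, a direct calculation gives
\[
w_1(r)\smat{a&b\\c&d}w_1(-r)=\smat{a+rc & b+r(d-a)-r^{2}c\\ c & d-rc},
\]
so the entries of $w_1(st_i)X_i^{(j)}w_1(-st_i)-I_1$ are polynomials in $s$ whose coefficients are built from $a_i^{(j)},b_i^{(j)},c_i^{(j)},d_i^{(j)}$ and powers of $t_i$. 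For the conjugate to land in $W_m$ we need the $(1,1),(2,1),(2,2)$ entries to tend to $0$ and the $(1,2)$ entry to converge to a polynomial in $s$; inspection shows this is guaranteed once $t_i^{2}c_i^{(j)}$ and $t_i(d_i^{(j)}-a_i^{(j)})$ are bounded and $|t_i|\to\infty$, from which $t_ic_i^{(j)}=(t_i^{2}c_i^{(j)})/t_i\to 0$ follows automatically.

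Accordingly, I would set
\[
M_i:=\max_{1\le j\le m}\max\bigl\{\,|c_i^{(j)}|^{1/2},\ |d_i^{(j)}-a_i^{(j)}|\,\bigr\}
\]
and pick $t_i\in\qp$ with $|t_i|$ equal to $M_i^{-1}$ up to a fixed bounded factor. Since $X_i\notin L_m$ one has $M_i>0$, and since $X_i\to I_m$ one has $M_i\to 0$, hence $t_i\to\infty$. By construction $|t_i^{2}c_i^{(j)}|$ and $|t_i(d_i^{(j)}-a_i^{(j)})|$ are bounded for every $i,j$, while at least one of these doubly-indexed sequences is bounded below by a positive constant. After passing to a subsequence, define $\gamma_j:=\lim_i t_i^{2}c_i^{(j)}$ and $\delta_j:=\lim_i t_i(d_i^{(j)}-a_i^{(j)})$, so that $(\gamma_j,\delta_j)\neq(0,0)$ for at least one $j\in\{1,\ldots,m\}$.

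Now for any $s\in\qp$ and any $s_i\to s$, plugging $r=s_it_i$ into the explicit conjugation formula and using $a_i^{(j)},b_i^{(j)},c_i^{(j)},d_i^{(j)}\to 0$ together with $t_ic_i^{(j)}\to 0$, the $(1,1),(2,1),(2,2)$ entries of $w_1(s_it_i)X_i^{(j)}w_1(-s_it_i)$ tend to $1,0,1$ respectively, while the $(1,2)$ entry tends to $\delta_j s-\gamma_j s^{2}$. Setting
\[
\psi(s):=(\delta_1 s-\gamma_1 s^{2},\,\ldots,\,\delta_m s-\gamma_m s^{2}),
\]
this yields \eqref{eq:cor:conj} and exhibits $\psi:\qp\to\qp^m$ as a nonconstant polynomial map of degree at most $2$ with $\psi(0)=0$.

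The only technical wrinkle I anticipate is the choice of $t_i\in\qp$ with prescribed absolute value in the non-archimedean case: since $|\qp^{\times}|$ is discrete one cannot achieve $|t_i|=M_i^{-1}$ exactly, only up to a factor bounded by the absolute value of a uniformizer. This is harmless because the argument above already allows $|t_i|$ to range in a fixed bounded multiplicative window of $M_i^{-1}$, and passing to a further subsequence lets all the relevant limits $\gamma_j,\delta_j$ exist simultaneously.
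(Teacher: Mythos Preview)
Your proposal is correct and follows essentially the same approach as the paper: both compute the explicit conjugation formula on $M_2(\qp)$, choose $t_i$ so that $\max_j\max\{|t_i^2c_i^{(j)}|,|t_i(d_i^{(j)}-a_i^{(j)})|\}$ is bounded with equality somewhere, and pass to a subsequence to extract the limits defining $\psi$. The only cosmetic difference is that the paper normalizes per coordinate $j$ (choosing $t_i(j)$ with the max equal to $1$) and then takes $t_i$ to be the one of smallest absolute value, whereas you pack this into a single quantity $M_i$; these are equivalent, and your remark about the discrete value group in the non-archimedean case is exactly the adjustment needed.
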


\begin{proof} 
  If we write $X_i=(X_i(1),\ldots,X_i(m))$, where $X_i(j)\in M_2(\qp)$
  for $1\leq j\leq m$, and $X_i(j,t)=w_1(t)X_i(j)w_1(-t)$ then
  \begin{equation*}
    w_m(t)X_iw_m(-t)=(X_i(1,t),\ldots,X_i(m,t)).
  \end{equation*}
  Fix any $1\leq j\le m$. If
  $X_i(j)=\smat{a_i(j)&b_i(j)\\c_i(j)&d_i(j)}$, then
  \begin{equation} \label{eq:Xijt}
    X_i(j,t)= X_i(j) + 
    \begin{pmatrix}
      c_i(j) & d_i(j)-a_i(j) \\
      0      & -c_i(j)
    \end{pmatrix} t + 
    \begin{pmatrix} 
      0 & -c_i(j) \\
      0 & 0
    \end{pmatrix} t^2,
  \end{equation}
  If $X_i(j,t)=X_i(j)$ for all $t$, then $c_i(j)=0=d_i(j)-a_i(j)$, and
  we put $t_i(j)=\infty$. If $c_i(j)\neq 0$ or $d_i(j)-a_i(j)\neq 0$,
  then there exists $t_i(j)\in\qp$ such that
  \begin{equation} \label{eq:tij}
    \max\bigl\{\abs{(d_i(j)-a_i(j))t_i(j)},\abs{c_i(j)t_i(j)^2}\bigr\}=1.
  \end{equation}
  As $i\to\infty$, since $X_i(j)\to 0$, we have $a_i(j)-d_i(j)\to
  1-1=0$ and $c_i(j)\to 0$. Therefore $t_i(j)\to\infty$, and hence
  $\abs{c_i(j)t_i(j)}\leq \abs{t_i(j)}\inv\to 0$ as $i\to\infty$.
  
  Put
  \begin{equation} \label{eq:t_i}
    t_i=\min\{t_i(1),\ldots,t_i(m)\}.
  \end{equation}
  Since $X_i\not\in (L_1)^m$, we have that $t_i<\infty$. Since $X_i\to
  I_m$, we have $t_i\to\infty$. By \eqref{eq:tij} and \eqref{eq:t_i},
  after passing to a subsequence, for each $1\leq j\leq m$, there exist
  $\alpha_j,\beta_j\in\qp$ such that
  \begin{equation} \label{eq:ab}
    \lim_{i\to\infty} (d_i(j)-a_i(j))t_i=\alpha_j \quad \text{and} \quad 
    \lim_{i\to\infty} -c_i(j)t_i^2 = \beta_j.
  \end{equation}
  In particular, $c_i(j)t_i\to 0$ for all $j$. Now \eqref{eq:cor:conj}
  follows from \eqref{eq:Xijt} and \eqref{eq:ab}, where
  \begin{equation*}
    \psi(s)=(\alpha_1 s+\beta_1 s^2,\ldots,\alpha_m s + \beta_m s^2).
  \end{equation*}
  Due to \eqref{eq:tij}, $\abs{\alpha_{j_0}}=1$ or $\abs{\beta_{j_0}}=1$
  for some $j_0$. Therefore $\psi$ is nonconstant. 
\end{proof}

\subsection*{Proof of Proposition~\ref{prop:UMU}:}

Let
\begin{equation*}
  E=E_{n-1}\times \qp^2 \quad \text{and} \quad \vp=(I_{n-1};I_0).
\end{equation*}
Define the linear action of $G$ on $E$ as follows: For any\\
$g=(g(1),\ldots,g(n))\in G$, and $X=(X(1),\ldots,X(n-1);Y)\in E$,
\begin{equation} \label{eq:gX}
  g\cdot X=(g(1)X(1)g(2)\inv, \ldots, g(n-1)X(n-1)g(n)\inv;g(n)Y).
\end{equation}
Then
\begin{equation} \label{eq:stab:U}
  U=\{g\in G: g\cdot \vp=\vp\}.
\end{equation}
Let $L=\{X\in E: U\cdot X=X\}=L_{n-1}\times L_0$. Then
\begin{equation} \label{eq:nor}
  \nor(U)=\{g\in G: g\cdot p\in L\}.
\end{equation}
We note that $\nor(U)=Z(G)DW$, where $Z(G)=\{(\pm I_1,\dots,\pm
I_1)\in G\}$ is the center of $G$. Also
\begin{equation} \label{eq:open-orbit}
  G\cdot\vp=\SL_2(\qp)^{n-1}\times \qp^\ast I_0.
\end{equation} 

For $g\in G$ if $g\cdot\vp\in \cl{UM\cdot\vp}$ then there exist
$u_i\in U$ and $m_i\in M$ such that $u_im_i\cdot\vp\to g\cdot\vp$ as
$i\to\infty$. Then $(g\inv u_i m_i)\cdot\vp\to p$. Therefore, by
\eqref{eq:open-orbit} there exists a sequence $\delta_i\to e$ in $G$
such that $(g\inv u_im_i)\cdot\vp=\delta_i\cdot\vp$ for all $i$. By
\eqref{eq:stab:U} there exist $u_i'\in U$ such that $g\inv
u_im_iu_i'=\delta_i$ for each $i$. Therefore $u_im_iu_i'\to g$.

Thus for any $g\in G$,
\begin{equation} \label{eq:gp:DW}
  g\cdot\vp\in \cl{UM\cdot\vp}\cap L
  \Leftrightarrow 
  g\in\cl{UMU}\cap\nor(U).
\end{equation}

By \eqref{eq:nor}, $M\cdot\vp\cap L=\emptyset$ and $e\in\cl{M}$.
Therefore there exists a sequence
\begin{equation} \label{eq:Xi:Mp}
  \{X_i\}\subset M\cdot\vp\subset E\setminus L,
\end{equation} 
such that $X_i\to \vp$ as $i\to\infty$. By combining
Lemma~\ref{lema:qp2} and Lemma~\ref{lema:conj:u}, after passing to
subsequences, there exists a sequence $t_i\to\infty$ in $\qp$ such
that for any $s\in\qp$,
\begin{equation} \label{eq:conj:u:quote}
  \lim_{i\to\infty} u(st_i)\cdot X_i =
  (w_{n-1}(\psi(s));\phi(s)I_0)\in L,
\end{equation}
were $\phi(s)$ is a polynomial of degree at most $1$, $\phi(0)=1$ and
\begin{equation*}
  \psi(s)=(\psi_1(s),\ldots,\psi_{n-1}(s))\in\qp^{n-1}
\end{equation*}
is a polynomial map of degree at most $2$, $\psi(0)=\boldsymbol{0}$,
and $\psi$ or $\phi$ is non-constant.  We define
$\psi'_k=\sum_{j=k}^{n-1}\psi_j$ for $1\leq k\leq n-1$, and
\begin{equation*}
  \psi'(s)=(\psi'_1(s),\ldots,\psi'_{n-1},0)\in\qp^n.
\end{equation*}
Then $\psi':\qp\to\qp^n$ is a polynomial of degree at most $2$, and
$\psi'$ is constant if and only if $\psi$ is constant.

For any $s\in\qp$ such that $\phi(s)\neq 0$, we put
\begin{equation} \label{eq:Phi:DUperp}
  \Phi(s)=w(\psi'(s))d(\phi(s)).
\end{equation}
Therefore due to \eqref{eq:gX},
\begin{equation} \label{eq:WDp}
  \Phi(s)\cdot\vp=(w_1(\psi_1(s)),\ldots,w_1(\psi_{n-1}(s));\phi(s)I_0)
  \in L.
\end{equation}
Therefore by \eqref{eq:Xi:Mp}--\eqref{eq:WDp},
\begin{equation*}
  \Phi(s)\cdot\vp\in \cl{U\cdot(M\cdot\vp)}\cap L.
\end{equation*}
Hence by \eqref{eq:gp:DW} and \eqref{eq:Phi:DUperp}, for all $s\in\qp$
with $\phi(s)\neq 0$,
\begin{equation*}
  \Phi(s)\in DU^\perp\cap \cl{UMU}.
\end{equation*}
Now the conclusion of the proposition follows from
Lemma~\ref{lema:DU-perp} proved below.  \qed

\subsection{Some more elementary lemmas}

It is straightforward to verify the following.

\begin{lema} \label{st-q}
  Let $m\in\N$ and $\psi:\qp\to \qp^m$ be a polynomial map such that
  $\deg(\psi)\geq 1$.  Then there exists a nonzero vector
  $\vv\in\qp^m$ such for any $s\in \qp$,
  \begin{equation} \label{eq:st-q}
    \psi(t+st^{-q})-\psi(t)\to s\vv \qquad \mbox{as $t\to\infty$},
  \end{equation}
  where $q=\deg(\psi)-1$. In particular, any closed additive subgroup
  generated by $\psi(\qp)$ contains a nonzero subspace of $\qp^m$.
  \qed
\end{lema}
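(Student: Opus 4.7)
The plan is to prove \eqref{eq:st-q} by direct binomial expansion, choosing the exponent $q=\deg(\psi)-1$ precisely so that the leading contribution in $\psi(t+st^{-q})-\psi(t)$ has $t$-exponent exactly $0$, while all other contributions decay as $|t|\to\infty$.

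Write $\psi(t)=\sum_{j=0}^{d} a_j t^j$ with $a_j\in\qp^m$, $a_d\neq 0$, and $d=\deg(\psi)\geq 1$, so that $q=d-1$. Then for each $j\geq 1$, the binomial theorem yields
\begin{equation*}
(t+st^{-q})^j - t^j = \sum_{k=1}^{j}\binom{j}{k}\,s^k\,t^{\,j-k(q+1)}
= \sum_{k=1}^{j}\binom{j}{k}\,s^k\,t^{\,j-kd}.
\end{equation*}
The key numerical observation is that for $1\leq j\leq d$ and $k\geq 1$, the exponent $j-kd$ is always nonpositive and vanishes if and only if $j=d$ and $k=1$; indeed for $k\geq 2$ one has $j-kd\leq d-2d=-d<0$, and for $k=1,j<d$ one has $j-d<0$. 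Since $|t|_p\to\infty$ means $|t^{-1}|_p\to 0$, every term with negative exponent tends to $0$ in $\qp^m$, and hence
\begin{equation*}
\psi(t+st^{-q})-\psi(t) \;=\; \sum_{j=1}^{d} a_j \sum_{k=1}^{j}\binom{j}{k}s^k t^{j-kd} \;\longrightarrow\; d\,a_d\,s \qquad (t\to\infty).
\end{equation*}
Setting $\vv:=d\,a_d$ gives \eqref{eq:st-q}; note $\vv\neq 0$ because $\Char(\qp)=0$ makes $d\neq 0$ in $\qp$, and $a_d\neq 0$ by assumption.

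For the ``In particular'' assertion, let $L$ be the closed additive subgroup generated by $\psi(\qp)$. For every fixed $s\in\qp$ and every $t$ with $|t|_p$ large, the element $\psi(t+st^{-q})-\psi(t)$ lies in $L$, and by \eqref{eq:st-q} its limit $s\vv$ also lies in $L$. Hence $\qp\vv\subset L$, which is a nonzero $\qp$-subspace of $\qp^m$.

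The only mildly delicate point is verifying that $j-kd\leq 0$ for all admissible $(j,k)$ with equality only at $(d,1)$; once that arithmetic is in place, the proof is immediate, and the use of $\Char(\qp)=0$ to conclude $d\neq 0$ is the only place where the characteristic hypothesis enters.
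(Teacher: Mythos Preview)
Your proof is correct and is precisely the straightforward verification the paper has in mind; the paper states the lemma without proof (ending it with \qed after ``It is straightforward to verify the following''), and your binomial-expansion argument with $\vv=d\,a_d$ is the natural way to carry it out.
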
 

\begin{lema} \label{lema:abelian} 
  Let $F$ be an abelian subgroup of $DW$ the either $F\subset \{d(\pm
  1)\}W$ or there exists $v\in W$ such that $F\subset vDv\inv$.
\end{lema}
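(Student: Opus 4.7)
The plan is to reduce the statement to an explicit commutation computation in the solvable group $DW$. Every element of $DW$ admits a unique expression $d(\alpha)w(\vt)$, and the basic conjugation relation $d(\alpha)w(\vt)d(\alpha)\inv=w(\alpha^2\vt)$ gives the multiplication rule
\begin{equation*}
  d(\alpha_1)w(\vt_1)\cdot d(\alpha_2)w(\vt_2) = d(\alpha_1\alpha_2)w(\alpha_2^{-2}\vt_1+\vt_2).
\end{equation*}
Comparing with the analogous expression for the product in the opposite order shows that two elements $d(\alpha_1)w(\vt_1)$ and $d(\alpha_2)w(\vt_2)$ commute if and only if
\begin{equation*}
  (\alpha_2^{-2}-1)\vt_1 = (\alpha_1^{-2}-1)\vt_2.
\end{equation*}

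Now I would split into two cases according to whether the $D$-component of every element of $F$ satisfies $\alpha^2=1$. If yes, then since $\Char(\qp)=0$ this means $\alpha\in\{\pm 1\}$ for every element, so $F\subset\{d(\pm 1)\}W$ and the first alternative holds. Otherwise, fix some $f_0=d(\alpha_0)w(\vt_0)\in F$ with $\alpha_0^2\neq 1$, choose $\vs\in\qp^n$ solving $(\alpha_0^{-2}-1)\vs=\vt_0$, and set $v=w(\vs)\in W$. A direct computation with the same conjugation relation yields
\begin{equation*}
  v d(\alpha) v\inv = d(\alpha)\,w\bigl((\alpha^{-2}-1)\vs\bigr)\qquad\forall\,\alpha\in\qp^\times,
\end{equation*}
so in particular $vd(\alpha_0)v\inv=f_0$. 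For any other $f=d(\alpha)w(\vt)\in F$, the commutation condition with $f_0$ forces $\vt=(\alpha^{-2}-1)\vs$, and hence $f=vd(\alpha)v\inv\in vDv\inv$. Thus $F\subset vDv\inv$, giving the second alternative.

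The whole argument is routine once the commutation relation is recorded; I do not foresee any serious obstacle. The only point requiring attention is the case dichotomy, which relies on the characteristic zero hypothesis to conclude $\alpha^2=1\Rightarrow\alpha=\pm 1$, and the observation that in the non-trivial case the single element $f_0$ already determines $\vs$ and hence the conjugating element $v\in W$ that diagonalizes all of $F$.
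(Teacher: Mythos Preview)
Your argument is correct and follows essentially the same route as the paper: pick an element $f_0=d(\alpha_0)w(\vt_0)\in F$ with $\alpha_0^2\neq 1$, conjugate it into $D$ by an appropriate $v\in W$, and then use commutativity to force every other element of $F$ into $vDv\inv$. The only cosmetic difference is the order of presentation---you first record the commutation relation $(\alpha_2^{-2}-1)\vt_1=(\alpha_1^{-2}-1)\vt_2$ and solve directly for $\vt$, whereas the paper conjugates first and then computes the centralizer of $d(\alpha_0)$ in $DW$; the underlying computation is the same.
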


\begin{proof}
  Suppose $d(\alpha)w(\vt)\in F$ for some $\alpha\in \qp^\ast$ such
  that $\alpha=\neq \pm 1$. Let $v=w((1-\alpha^2)\inv \vt)$. Then
  $v\inv d(\alpha)w(\vt) v=d(\alpha)$. Therefore $v\inv Fv$ is
  contained in the centralizer of $d(\alpha)$.

  Now for any $\beta\in\qp^\times$ and $\vs\in\qp^n$, we have
  \begin{equation*}
    d(\alpha)[d(\beta)w(\vs)]d(\alpha)\inv=d(\beta)w(\alpha^2\vs).
  \end{equation*}
  Therefore, since $\alpha^2\neq 1$, we have $vFv\inv\subset D$.
\end{proof}

\begin{lema}
  \label{lema:DU-perp}
  Let $\phi:\qp\to\qp$ be a linear map, and
  $\psi:\qp\to\qp^{n-1}\times\{0\}$ be a polynomial map such that at
  least one of them is non-constant, $\phi(0)=1$ and $\psi(0)=0$.  Let
  $F$ be the closed subgroup of $DU^\perp$ generated by
  \begin{equation*}
    \{\Phi(t):=w(\psi(t))d(\phi(t)):t\in\qp,\,\phi(t)\neq 0\}.
  \end{equation*}
  Then either $F$ contains a nontrivial one-parameter subgroup of
  $U^\perp$ or $F=vDv\inv$ for some $v\in U^\perp$.
\end{lema}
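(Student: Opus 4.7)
The plan is to split into cases based on whether $\phi$ is constant. If $\phi$ is constant, then $\phi\equiv 1$ since $\phi(0)=1$, so $\Phi(t)=w(\psi(t))\in U^\perp$ and by hypothesis $\psi$ is non-constant. Then $F$ is the closed additive subgroup of $U^\perp\cong\qp^{n-1}\times\{0\}$ generated by the image of the non-constant polynomial $\psi$, and Lemma~\ref{st-q} directly yields a non-zero $\qp$-subspace inside $F$, which is the desired non-trivial one-parameter subgroup of $U^\perp$.

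Otherwise $\phi$ is non-constant, say $\phi(t)=1+ct$ with $c\neq 0$, so $\phi\colon\qp\to\qp$ is surjective. I would further split according to whether there exists $\vv\in\qp^{n-1}\times\{0\}$ such that $\psi(t)=(1-\phi(t)^2)\vv$ for all $t$. If so, set $v=w(\vv)\in U^\perp$; using the identity $w(\vv)d(\alpha)w(-\vv)=w((1-\alpha^2)\vv)d(\alpha)$ one checks $\Phi(t)=v\,d(\phi(t))\,v^{-1}$, and since $\phi$ is surjective the $\Phi(t)$ sweep out all of $vDv^{-1}$, giving $F=vDv^{-1}$.

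If no such $\vv$ exists, I would pick $t_1,t_2$ with $(1-\phi(t_2)^2)\psi(t_1)\neq(1-\phi(t_1)^2)\psi(t_2)$ and compute in the semidirect product $DU^\perp$:
\begin{equation*}
[\Phi(t_1),\Phi(t_2)]=w\bigl((1-\phi(t_2)^2)\psi(t_1)-(1-\phi(t_1)^2)\psi(t_2)\bigr)\in (F\cap U^\perp)\setminus\{e\}.
\end{equation*}
Since $\phi$ is surjective, the projection $DU^\perp\to D$ sends $F$ onto all of $D$, so $F\cap U^\perp$ is stable under $D$-conjugation, i.e., under the scaling action $\vect{u}\mapsto\alpha^2\vect{u}$ for every $\alpha\in\qp^\times$. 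The main step is then to conclude that any non-trivial subgroup of $\qp^{n-1}$ stable under multiplication by $\qp^{\times 2}$ contains a $\qp$-line; for this I would use the characteristic-zero identity $x=((x+1)/2)^2-((x-1)/2)^2$, which writes every $x\in\qp$ as a difference of two squares and hence shows that the additive subgroup of $\qp$ generated by the nonzero squares is all of $\qp$. Applied to any non-zero element of $F\cap U^\perp$, this produces the required one-parameter subgroup of $U^\perp$ contained in $F$.
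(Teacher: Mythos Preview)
Your argument is correct and follows the same three-way split as the paper (the case $\phi$ constant; the ``abelian'' case; the ``non-abelian'' case), but the implementation in the last two cases differs.

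For the abelian case, the paper simply observes that $F$ abelian and $F\not\subset\{d(\pm1)\}W$ forces $F\subset vDv^{-1}$ for some $v\in W$ via Lemma~\ref{lema:abelian}, and then surjectivity of $\phi$ gives equality. You instead characterize this case explicitly by the identity $\psi(t)=(1-\phi(t)^2)\vv$ and write down $v$ by hand; this is equivalent (your condition is precisely the vanishing of all the commutators you compute), and your route avoids invoking Lemma~\ref{lema:abelian}.

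For the non-abelian case, the paper takes a nontrivial $w(\vs)\in[F,F]\subset U^\perp$, conjugates by $\Phi(t)$ to get $w(\phi(t)^2\vs)\in F$, and then applies Lemma~\ref{st-q} a second time to the nonconstant polynomial $t\mapsto\phi(t)^2\vs$ to produce a line in $F\cap U^\perp$. You instead note that $F\cap U^\perp$ is stable under scaling by all of $(\qp^\times)^2$ (because $F$ surjects onto $D$) and use the characteristic-zero identity $x=((x+1)/2)^2-((x-1)/2)^2$ to conclude directly that the closed additive subgroup generated by $(\qp^\times)^2\cdot\vs$ is $\qp\vs$. Both arguments are short; yours is a bit more self-contained here, while the paper's has the advantage of reusing Lemma~\ref{st-q} uniformly. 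One small point worth making explicit in your write-up: when you pick $t_1,t_2$ you should also require $\phi(t_1),\phi(t_2)\neq0$ so that $\Phi(t_1),\Phi(t_2)$ are actually among the generators of $F$; since $\phi$ vanishes at a single point this is harmless, and the failure of the identity $\psi(t)=(1-\phi(t)^2)\vv$ on all of $\qp$ is equivalent (by Zariski density) to its failure on the complement of that point.
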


\begin{proof} 
  If $F\subset U^\perp$ then the result follows from Lemma~\ref{st-q}.
  Otherwise $\phi$ is a non-constant linear map.  Therefore
  $\phi(\qp)=\qp$. In particular, $F\not\subset Z(G)W$.
  
  If $F$ is abelian, then by Lemma~\ref{lema:abelian} there exists
  $v\in W$ such that $F\subset vDv\inv$. Since $\phi$ is linear and
  nonconstant, $F=vDv\inv$.
  
  Now we can further assume that $F$ is not abelian.
  Since the commutator
  \begin{equation*}
    [F,F]\subset [DU^\perp,DU^\perp]\subset U^\perp,
  \end{equation*}
  there exists $\vs\in\qp^{n-1}\times\{0\}$, $\vs\neq 0$ such that
  $w(\vs)\in F$. Therefore
  \begin{equation*}
    \Phi(t)w(\vs)\Phi(t)\inv =w(\phi(t)^2\vs)\in F, \qquad\forall
    t\in\qp.
  \end{equation*}
  Put $\tilde{\psi}(t):=\phi^2(t)\vs$. Then $\tilde\psi:\qp\to\qp^{n-1}\times
  \{0\}$ is a non-constant polynomial map.  Therefore by
  Lemma~\ref{st-q} applied to $\tilde\psi$ we conclude that $F$
  contains a nontrivial one-parameter subgroup of $U^\perp$. This
  completes the proof.  
\end{proof}

The following is a special case of the general fact that cocompact
discrete subgroups in semisimple Lie groups do not contain unipotent
elements having nontrivial Adjoint action on the Lie algebra.

\begin{prop} \label{prop:cocomp}
  $W\cap G_x=\{e\}$ for all $x\in G/\Gamma$.
\end{prop}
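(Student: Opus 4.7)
The plan is to reduce \textbf{Proposition \ref{prop:cocomp}} to the classical fact that a cocompact discrete subgroup $\Gamma_j \subset \SL_2(\qp)$ contains no nontrivial unipotent element, and then to establish the latter by a short contraction-and-compactness argument.

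For the reduction, fix $x = g[\Gamma]$ with $g = (g_1,\dots,g_n)\in G$, so that $G_x = g\Gamma g\inv$. Because the decomposition $\Gamma = \Gamma_1\cdots\Gamma_n$ matches the product structure of $G$, every element of $W\cap G_x$ is of the form
\begin{equation*}
(w_1(t_1),\dots,w_1(t_n)) = g\gamma g\inv, \qquad \gamma = (\gamma_1,\dots,\gamma_n),\ \gamma_j\in\Gamma_j,
\end{equation*}
so that $w_1(t_j) = g_j\gamma_j g_j\inv$ in each coordinate. Each $\gamma_j$ is then unipotent in $\SL_2(\qp)$ (being conjugate to $w_1(t_j)$), and granted the core claim we conclude $\gamma_j = e$, hence $t_j = 0$, for every $j$. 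This gives $W\cap G_x = \{e\}$.

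For the core claim, suppose for contradiction that some $\gamma\in\Gamma_j$ is a nontrivial unipotent. Replacing $\Gamma_j$ by a suitable conjugate inside $\SL_2(\qp)$ (which preserves both cocompactness and discreteness), we may assume $\gamma = w_1(t)$ with $t\in\qp^\times$. Choose $\alpha\in\qp^\times$ with $\abs{\alpha}>1$; then
\begin{equation*}
d(\alpha^{-m})\,\gamma\,d(\alpha^{m}) = w_1(\alpha^{-2m}t) \longrightarrow e \quad \text{as } m\to\infty,
\end{equation*}
while each term is nontrivial. By compactness of $\SL_2(\qp)/\Gamma_j$, some subsequence of $\{d(\alpha^{-m})[\Gamma_j]\}$ converges; since the quotient map is a covering (as $\Gamma_j$ is discrete), we can pick along this subsequence elements $\gamma_m\in\Gamma_j$ so that $h_m := d(\alpha^{-m})\gamma_m\inv$ converges to some $g_\infty\in\SL_2(\qp)$. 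Then
\begin{equation*}
\gamma_m\,\gamma\,\gamma_m\inv = h_m\inv\bigl(d(\alpha^{-m})\,\gamma\,d(\alpha^{m})\bigr) h_m \longrightarrow g_\infty\inv\cdot e\cdot g_\infty = e,
\end{equation*}
yielding a sequence in $\Gamma_j$ that converges to $e$ while all its terms are nontrivial (each being conjugate to $\gamma\ne e$), contradicting discreteness of $\Gamma_j$.

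The argument is a classical Margulis-style contraction. The only subtle point is the lifting in the compactness step, but this is routine: because the action of the discrete group $\Gamma_j$ makes the quotient map a covering, any convergent sequence in $\SL_2(\qp)/\Gamma_j$ lifts (after extracting a subsequence) to a convergent sequence in $\SL_2(\qp)$, which is exactly what supplies the bounded auxiliary elements $h_m$.
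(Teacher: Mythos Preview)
Your proof is correct and follows essentially the same contraction idea as the paper: both conjugate a hypothetical nontrivial unipotent in the stabilizer by powers of the diagonal element $d(\alpha)$ to push it toward $e$, and then use compactness of the quotient together with discreteness to force it to be trivial. The only cosmetic differences are that you first reduce to a single factor $\SL_2(\qp)$ and extract a convergent subsequence explicitly, whereas the paper works on all of $G/\Gamma$ at once via a uniform injectivity-radius neighborhood $\Omega$ with $G_y\cap\Omega=\{e\}$ for every $y$.
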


\begin{proof} 
  Let $C$ be a compact subset of $G$ such that $C\Gamma=G$. Since
  $\Gamma$ is discrete, there exists a neighbourhood $\Omega$ of $e$
  in $G$ such that $cZ(G)\Gamma\inv \cap \Omega=\{e\}$ for all $c\in
  C$.  Therefore
  \begin{equation*}
    G_y\cap \Omega=\{e\}, \qquad \forall\, 
    y\in C\Gamma/\Gamma=G/\Gamma.
  \end{equation*}
  Suppose that $w(\vt)\in G_x$ for some $\vt\in\qp$.
  Let $\alpha\in\qp^\times$ such that $\abs{\alpha}<1$. Then
  \begin{equation*}
    G_{d(\alpha^i)x}=d(\alpha^i)G_x d(\alpha^{-i})\ni 
    d(\alpha^i)w(\vt)d(\alpha^{-i})=w(\alpha^{2i}\vt)\to e
  \end{equation*}
  as $i\to\infty$. Therefore $w(\alpha^{2i}\vt)\in 
  G_{d(\alpha^i)x}\cap \Omega =\{e\}$ for some $i$. Hence $w(\vt)=e$.
\end{proof}

\begin{prop} \label{prop:DW/Delta} Let $\Delta$ be a discrete subgroup
  of $DW$ such that $\Delta\cap W=\{e\}$. Then $W$ acts properly on
  $DW/\Delta$.
\end{prop}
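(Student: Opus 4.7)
The plan is to first observe that $\Delta$ is necessarily abelian and then invoke Lemma~\ref{lema:abelian} to constrain $\Delta$ to a conjugate of $D$; properness then follows from a simple unwinding argument.

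Let $\rho\colon DW\to D$ denote the projection $d(\alpha)w(\vt)\mapsto d(\alpha)$, a continuous homomorphism with kernel $W$. Since $D$ is abelian, for any $g_1,g_2\in DW$ we have $\rho([g_1,g_2])=[\rho(g_1),\rho(g_2)]=e$, so $[DW,DW]\subset W$. Hence $[\Delta,\Delta]\subset \Delta\cap W=\{e\}$; i.e.\ $\Delta$ is abelian. By Lemma~\ref{lema:abelian}, either $\Delta\subset\{d(\pm 1)\}W$ or there exists $v\in W$ with $\Delta\subset vDv^{-1}$. In the first case, the identity $(d(-1)w(\vt))^2=w(2\vt)$ together with $\Delta\cap W=\{e\}$ and the hypothesis $\mathrm{char}(\qp)=0$ forces $\Delta\subset\{e,d(-1)\}$; this is a finite central subgroup of $DW$, for which $W$ evidently acts properly on $DW/\Delta$.

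Focus, therefore, on the main case $\Delta\subset vDv^{-1}$. I would observe that the restricted projection $\rho|_{vDv^{-1}}\colon vDv^{-1}\to D$ sends $vdv^{-1}\mapsto d$ (because $v\in W=\ker\rho$) and is therefore a topological group isomorphism. Consequently $\rho|_\Delta\colon \Delta\to \rho(\Delta)\subset D$ is a homeomorphism, so the discreteness of $\Delta$ in $DW$ transfers to the discreteness of $\rho(\Delta)$ in $D$.

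With this in hand, properness is routine. Given $w_i\in W$ and sequences $x_i\to x$ and $w_ix_i\to y$ in $DW/\Delta$, lift to $g_i\to g$ in $DW$ with $g_i\Delta=x_i$, and pick $\delta_i\in\Delta$ and $h\in DW$ with $w_ig_i\delta_i\to h$ and $h\Delta=y$. Applying $\rho$ yields $\rho(\delta_i)\to\rho(g)^{-1}\rho(h)$, so $\{\rho(\delta_i)\}$ is bounded; since $\rho(\Delta)$ is discrete and $\rho|_\Delta$ is injective, $\{\delta_i\}$ can take only finitely many values, and after passing to a subsequence $\delta_i=\delta$ is constant. Then $w_i\to h\delta^{-1}g^{-1}$, and because $W$ is closed in $DW$ this limit lies in $W$, giving the required boundedness of $\{w_i\}$. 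The conceptual substance lies entirely in proving abelianness and invoking Lemma~\ref{lema:abelian}; once $\rho(\Delta)$ is known discrete, properness is mechanical.
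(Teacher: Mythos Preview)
Your proof is correct and takes essentially the same route as the paper: show $\Delta$ is abelian via $[DW,DW]\subset W$, then invoke Lemma~\ref{lema:abelian}. The paper folds your first case into the main one (since $d(-1)\in D\subset vDv^{-1}$ for any $v$) and concludes properness via the $W$-equivariant product decomposition $DW/\Delta\cong W\times(vDv^{-1}/\Delta)$ rather than your sequential argument through~$\rho$, but the substance is identical.
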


\begin{proof} 
  We have
  \begin{equation*}
    [\Delta,\Delta]\subset [DW,DW]\cap \Delta\subset W\cap \Delta=\{e\}.
  \end{equation*}
  Hence $\Delta$ is an abelian subgroup of $DW$. If $g=d(-1) w(\vt)\in
  \Delta$ for some $\vt\in \qp^n$, then $g^2=w(2\vt)\in \Delta\cap
  W=\{e\}$; and hence $\vt=0$. Therefore by Lemma~\ref{lema:abelian}
  there exists $v\in W$ such that $\Delta \subset vDv\inv$.
  
  Since $DW=(vDv\inv)W=W(vDv\inv)$, we have that
  \begin{equation*}
  DW/\Delta=W(vDv\inv)/\Delta\cong W\times (vDv\inv/\Delta)
  \end{equation*}
  is a $W$-equivariant isomorphism, where $W$ acts on the space
  $W\times (vDv\inv/\Delta)$ by translation on the first factor and
  trivially on the second factor; and this action is proper.
\end{proof}

\section{$U$-minimal sets}

In order to understand closed $U$-invariant sets, especially the
closures of $U$-orbits, we begin with the study of $U$-minimal sets.

\begin{theo} \label{thm:U-minimal}
  Let $X$ be a $U$-minimal subset of $G/\Gamma$.  Then $X$ is
  invariant under either $vDv\inv$ for some $v\in U^\perp$ or a
  nontrivial one-parameter subgroup of $U^\perp$.
\end{theo}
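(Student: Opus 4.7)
The plan is to produce elements of $\nor(U)$ beyond $U$ itself which preserve $X$, by first finding nearby transverse returns into $X$, then running the standard Margulis-type pushout with Lemma~\ref{lema:top1}, and finally using Proposition~\ref{prop:UMU} (together with Lemma~\ref{lema:DU-perp}) to convert the resulting closed subgroup into either a conjugate of $D$ by an element of $U^\perp$ or a nontrivial one-parameter subgroup of $U^\perp$. The key first step is to locate $y\in X$ and a sequence $m_i\to e$ in $G\setminus\nor(U)$ with $m_iy\in X$ for all $i$.

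To get this sequence, I would argue by contradiction. If for some fixed $y\in X$ no such $m_i$ exist, then a neighbourhood $\Phi$ of $e$ in $G$ satisfies $\{g\in\Phi:gy\in X\}\subset\nor(U)$, so Lemma~\ref{lema:top2} with $F=U$ and $P=\nor(U)=DW$ (using $Z(G)\subset D$) shows that $\cl{\eta(U)}$ is compact in $DW/P_y$, where $P_y=DW\cap G_y$ and $\eta$ is the quotient map. By Proposition~\ref{prop:cocomp}, $P_y\cap W=\{e\}$, so Proposition~\ref{prop:DW/Delta} tells us $W$, and therefore $U$, acts properly on $DW/P_y$. Since $U$ has trivial stabiliser at $\eta(e)$, the orbit map $\eta|_U$ is a proper injection onto a closed subset; but $\eta(U)\cong U$ is non-compact, contradicting the compactness of $\cl{\eta(U)}=\eta(U)$.

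With $M=\{m_i\}$ in hand, Lemma~\ref{lema:top1} applied to $F=U$, $P=P'=U$, $Y=Y'=X$ goes through (all hypotheses are immediate since $X$ is $U$-minimal and compact, and $m_iy\in m_iX\cap X$), yielding $gX\subset X$ for every $g\in\nor(U)\cap\cl{UMU}$. Because such a $g$ normalises $U$, $gX$ is closed and $U$-invariant, and $U$-minimality forces $gX=X$; hence the closed subgroup $F'$ generated by $\nor(U)\cap\cl{UMU}$ preserves $X$. Finally, Proposition~\ref{prop:UMU} -- whose proof through Lemma~\ref{lema:DU-perp} actually delivers $v\in U^\perp$ rather than only $w\in W$ -- guarantees that $F'$ contains either $vDv\inv$ with $v\in U^\perp$ or a nontrivial one-parameter subgroup of $U^\perp$, matching the two alternatives in the conclusion.

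The main obstacle is the transverse-return step: without Proposition~\ref{prop:cocomp} and Proposition~\ref{prop:DW/Delta} there is no mechanism preventing $X$ from locally sitting inside a single $\nor(U)$-orbit near some point, which would block the whole pushout. Once the transverse returns are secured, the rest reduces to a routine application of the minimal-set pushing lemma and the polynomial-divergence analysis already packaged in Proposition~\ref{prop:UMU}.
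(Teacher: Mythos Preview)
Your proposal follows the paper's strategy: rule out the case of no transverse returns by a properness contradiction using Propositions~\ref{prop:cocomp} and~\ref{prop:DW/Delta}, then apply Lemma~\ref{lema:top1} and Proposition~\ref{prop:UMU}. The only structural difference is in the first step: the paper sets $M=\{g\in G:gX\cap X\neq\emptyset\}$, observes that $M_1:=M\cap DW$ is a closed subgroup, and argues that if $e\not\in\cl{M\setminus\nor(U)}$ then every $M_1$-orbit in $X$ is open, so $X=M_1x$ is a compact $M_1$-orbit, contradicting properness. You instead route through Lemma~\ref{lema:top2}; this is a perfectly valid alternative packaging of the same idea. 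Your remark that the proof of Proposition~\ref{prop:UMU} via Lemma~\ref{lema:DU-perp} already yields $v\in U^\perp$ rather than merely $w\in W$ is correct and matches what the paper implicitly uses.

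One genuine slip: the assertion $Z(G)\subset D$ is false for $n\geq 2$, since $Z(G)=\{(\pm I,\dots,\pm I)\}$ has $2^n$ elements while $D\cap Z(G)=\{d(\pm 1)\}$ has only two; thus $\nor(U)=Z(G)DW\neq DW$. This does not break your argument. Apply Lemma~\ref{lema:top2} with $P=\nor(U)=Z(G)DW$; since $DW$ is open and closed (of finite index) in $P$, the set $DWP_y/P_y\cong DW/(DW\cap P_y)$ is closed in $P/P_y$ and contains $\eta(U)$, so Proposition~\ref{prop:DW/Delta} still applies to give the properness contradiction.
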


\begin{proof} 
  Let $M=\{g\in G: gX\cap X\neq\emptyset\}$. For any $g\in
  M\cap\nor(U)$, $gX\cap X$ is a nonempty closed $U$-invariant set.
  Hence by minimality $gX=X$. Thus $M\cap \nor(U)$ is a closed
  subgroup of $G$. We note that $DW$ is a subgroup of finite index in
  $\nor(U)=Z(G)DW$.  Therefore $M_1:=M\cap DW$ is a closed subgroup of
  $DW$ and an open subgroup of $M\cap\nor(U)$.

  First suppose that $e\not\in \cl{M\setminus \nor(U)}$. Then every
  orbit of $M\cap \nor(U)$ in $X$ is open. Therefore every orbit of
  $M_1$ on $X$ is open, and hence it is compact. Let $x\in X$. Since
  $U\subset M_1$, and $X$ is $U$-minimal, $X=M_1x$. Hence
  $M_1/(M_1)_x\cong M_1x=X$ is compact. By
  Proposition~\ref{prop:cocomp} and Proposition~\ref{prop:DW/Delta},
  $U$ acts properly on $M_1/(M_1)_x$, which is a contradiction.

  Therefore $e\in\cl{M\setminus\nor(U)}$. By Lemma~\ref{lema:top1},
  $X$ is invariant under the subgroup generated by
  $\nor(U)\cap\cl{UMU}$.  Now the conclusion of the theorem follows
  from Proposition~\ref{prop:UMU}.
\end{proof}

\begin{cor} \label{cor:n1} 
  Let $n=1$; that is, $G\cong\SL_2(\qp)$ and $\Gamma$ is a cocompact
  discrete subgroup of $G$. Then $Ux$ is dense in $G/\Gamma$ for every
  $x\in G/\Gamma$.
  
  In other words, Theorem~\ref{thm:closure} is valid for $n=1$. 
\end{cor}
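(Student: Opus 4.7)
The plan is to apply Theorem~\ref{thm:U-minimal} directly and then invoke Proposition~\ref{prop:horo}. The key observation for $n=1$ is that $W=U$, so $DW=DU$, and more importantly $U^\perp=\{w(t_1,\dots,t_{n-1},0):t_j\in\qp\}$ collapses to the trivial group $\{e\}$. This means the two alternatives produced by Theorem~\ref{thm:U-minimal} degenerate: the only possible $v\in U^\perp$ is $e$ (so $vDv\inv=D$), and no nontrivial one-parameter subgroup of $U^\perp$ exists.

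First I would fix $x\in G/\Gamma$. Since $\Gamma$ is cocompact, $G/\Gamma$ is compact, so the closed $U$-invariant set $\cl{Ux}$ contains a nonempty $U$-minimal subset $Y$ by Zorn's lemma. By Theorem~\ref{thm:U-minimal} applied to $Y$, the only surviving alternative forces $Y$ to be $D$-invariant. Hence $Y$ is invariant under the group generated by $D$ and $U$, which for $n=1$ is exactly $DW$.

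Next I would pick any $y\in Y$. Since $Y$ is closed and $DW$-invariant, $\cl{DWy}\subset Y$. But by Proposition~\ref{prop:horo}, $\cl{DWy}=G/\Gamma$. Therefore $Y=G/\Gamma$, and consequently $\cl{Ux}\supset Y=G/\Gamma$, giving the density claim.

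Finally, to match Theorem~\ref{thm:closure} for $n=1$: the only nonempty partition of $\{1\}$ is $\cJ=\{\{1\}\}\in\cC_0$, and $H_\cJ=H_{\{1\}}=G$; taking $w=e\in W$ yields $\cl{Ux}=G/\Gamma=(wH_\cJ w\inv)x$, as required. No real obstacle arises here---the whole content is the fact that the $U^\perp$-alternatives in Theorem~\ref{thm:U-minimal} vanish when $n=1$, so every $U$-minimal set is automatically $DW$-invariant and hence all of $G/\Gamma$ by the horospherical density statement.
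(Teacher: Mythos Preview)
Your proposal is correct and follows essentially the same route as the paper's proof: take a $U$-minimal subset of $\cl{Ux}$, observe that for $n=1$ the alternatives in Theorem~\ref{thm:U-minimal} collapse (since $U^\perp=\{e\}$ and $W=U$) to give $D$-invariance and hence $DW$-invariance, and then apply Proposition~\ref{prop:horo}. The only addition is your final paragraph matching the conclusion to the form of Theorem~\ref{thm:closure}, which the paper leaves implicit.
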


\begin{proof} 
  Since $\cl{Ux}$ is a closed $U$-invariant subset of $G/\Gamma$,
  there exists a compact $U$-minimal subset $X\subset\cl{Ux}$. By
  Theorem~\ref{thm:U-minimal}, $X$ is invariant under $D$, because for
  the case of $n=1$, we have $W=U$ and $U^\perp=\{e\}$. Thus $X$ is a
  closed $DW$-invariant subset of $G/\Gamma$. Therefore by
  Proposition~\ref{prop:horo}, $X=G/\Gamma$. Thus $\cl{Ux}=G/\Gamma$.
\end{proof}

\subsection{$D$-invariant $U$-minimal sets}
In view of Theorem~\ref{thm:U-minimal}, we first suppose that the
$U$-minimal set is invariant under $wDw\inv$ for some $w\in U^\perp$.
Now $Y:=w\inv X$ is $U$-minimal and $D$-invariant.  Therefore for
simplicity of notation we will further investigate $Y$, rather than
$X$.

We need the following group theoretic result.

\begin{prop} \label{prop:UhB}
  Let sequences $\{h_i\}$ in $\SL_2(\qp)$ and $\{t_i\}$ in $\qp$,
  $\abs{t_i}\to\infty$ be given. Then, after passing to a subsequence,
  there exists at most one $s^\ast\in\qp$ such that for any $s\in\qp$,
  $s\neq s^\ast$, the following holds:
  \begin{equation} \label{eq:UhB}
    w_1(st_i)h_i B\to eB \qquad \text{as $i\to\infty$}, 
  \end{equation}
  where $B$ is the group of all upper triangular matrices in
  $\SL_2(\qp)$, and the limit is considered in the quotient space
  $\SL_2(\qp)/B$.
  
  In fact, if $\{h_i\}$ is a constant sequence, then \eqref{eq:UhB}
  holds for all $s\in\qp$.
\end{prop}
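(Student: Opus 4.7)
The plan is to translate the problem into one on the projective line $\mathbb{P}^1(\qp)$. Under the standard identification $\SL_2(\qp)/B \cong \mathbb{P}^1(\qp)$ given by $gB \mapsto g\cdot[1:0]$ (where $B$ is the stabilizer of $[1:0]$), the action of $w_1(t)$ becomes the translation $z\mapsto z+t$ on the affine chart $\{[z:1]:z\in\qp\}$, fixing $eB = [1:0]$. Convergence $[\alpha_i:\beta_i]\to[1:0]$ in $\mathbb{P}^1(\qp)$ is equivalent to $\abs{\alpha_i/\beta_i}\to\infty$ (with the convention that the limit is $\infty$ whenever $\beta_i = 0$).

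Writing $h_i = \smat{a_i & b_i \\ c_i & d_i}$, we have $h_i B \leftrightarrow [a_i:c_i]$ and $w_1(st_i) h_i B \leftrightarrow [a_i + st_i c_i : c_i]$. After passing to a subsequence we may assume either $c_i = 0$ for every $i$ (in which case $h_i B = eB$ and \eqref{eq:UhB} holds trivially for every $s$, with no exceptional $s^\ast$ needed), or $c_i\neq 0$ for every $i$. In the latter case set $r_i := a_i/c_i \in \qp$, so the required convergence becomes $\abs{r_i + st_i}\to\infty$. Using compactness of $\mathbb{P}^1(\qp)$, I pass to a further subsequence on which $r_i/t_i$ converges in $\mathbb{P}^1(\qp)$, obtaining two cases: either $\abs{r_i/t_i}\to\infty$, or $r_i/t_i\to\alpha$ for some $\alpha\in\qp$.

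In the first case, $\abs{r_i + st_i} = \abs{t_i}\cdot\abs{r_i/t_i + s}\to\infty$ for every $s\in\qp$, so no exceptional $s^\ast$ is needed. In the second case, set $s^\ast := -\alpha$; then for every $s\neq s^\ast$ the ratio satisfies $\abs{r_i/t_i + s}\to\abs{s - s^\ast} > 0$, whence $\abs{r_i + st_i}\to\infty$. Uniqueness of $s^\ast$ is immediate, since the exceptional value must equal $-\lim r_i/t_i$ whenever that limit lies in $\qp$. The argument is elementary once the identification with $\mathbb{P}^1(\qp)$ is made, so I expect no serious obstacle; the only step requiring mild care is ruling out further exceptional values, which follows from the linearity of $s\mapsto r_i/t_i + s$. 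For the constant-sequence assertion: if $h_i = h$ is constant and $h\in B$ then $w_1(st_i)h_i B = eB$ for every $s$ automatically; otherwise the constant $r_i = a/c$ yields $r_i/t_i\to 0$, placing us in the second case with $s^\ast = 0$, so \eqref{eq:UhB} holds for every $s\neq 0$, and for every $s$ when $h\in B$.
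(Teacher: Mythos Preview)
Your proof is correct and follows essentially the same route as the paper: identify $\SL_2(\qp)/B$ with the projective line, write $h_iB$ in homogeneous coordinates, and analyze the limit of $-a_i/(c_it_i)$ (the paper's $-a_i/(t_ib_i)$) after passing to a subsequence. Your observation that the ``In fact'' clause literally fails at $s=0$ when $h\notin B$ is accurate; the paper's phrasing is slightly imprecise there, though only the case $s\neq 0$ is ever used downstream.
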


\begin{proof} 
  Consider the projective linear action of $\SL_2(\qp)$ on the
  projective space $\cP=(\qp^2\setminus\{0\})/\qp^\times$. Let
  $\pline{\vv}$ denote the image of $\vv\in\qp^2$ on $\cP$. The the
  stabilizer of $\pline{{\ve}_1}$ is $B$, where $\ve_1=\smat{1 \\ 0}$.  We
  can express $h_i\pline{e_1}= \pLine{\smat{a_i\\b_i}}$, where
  $\abs{a_i}^2+\abs{b_i}^2=1$. Then for any $s\in\qp$,
  \begin{equation} \label{eq:proj}
    w_1(st_i)h_i\pline{e_1}=
    \pLine{\smat{a_i+st_ib_i \\ b_i}}
    = \pLine{\smat{1 \\ b_i/a_i+st_ib_i}}, \qquad 
    \text{if $s\neq -a_i/(t_ib_i)$}.
  \end{equation}

  After passing to a subsequence, either $-a_i/(t_ib_i)\to s^\ast$ for
  some $s^\ast\in\qp$, or $\abs{-a_i/(t_ib_i)}\to\infty$. By
  \eqref{eq:proj}, if $s\neq s^\ast$, then since $\abs{t_i}\to\infty$,
  \begin{equation*}
    w_1(st_i)h_i\pline{e_1}\to\, \pline{e_1}.
  \end{equation*}
  From this \eqref{eq:UhB} follows, because the action of $\SL_2(\qp)$
  on $\cP$ is transitive, and the stabilizer of $\pline{e_1}$ is $B$.
\end{proof}

The next proposition is very similar to Proposition~\ref{prop:UMU},
and it will allow us to investigate further after an application of
Lemma~\ref{lema:top1}.

\begin{prop} \label{prop:DUMDU}
  Let $M\subset G$ such that $e\in\cl{M\setminus H}$. Then the closed
  subgroup generated by $\cl{DUMDU}\cap W$ contains a nontrivial
  one-parameter subgroup of $U^\perp$.
\end{prop}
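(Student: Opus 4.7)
The plan is to adapt the strategy of Proposition~\ref{prop:UMU}, replacing the test space by one whose $G$-stabilizer is $DU$ rather than $U$, so that the local-section step yields a correction factor in $DU$ and the resulting limits land in $\cl{DUMDU}$. Concretely, work on
\[
V = E_{n-1}\times\cP, \qquad \vp = (I_{n-1}, \pline{\ve_1}),
\]
where $\cP = \SL_2(\qp)/B$ with $B$ the upper Borel in $\SL_2(\qp)$, $G$ acts on $E_{n-1}$ as in the proof of Proposition~\ref{prop:UMU}, and $G$ acts on $\cP$ through its $n$-th coordinate. A direct check gives $G_\vp = DU$, and the orbit $G\cdot\vp = \SL_2(\qp)^{n-1}\times\cP$ is a smooth submanifold of $V$; as in the proof of Proposition~\ref{prop:UMU}, a local cross-section of $G\to G/DU$ converts any orbit-convergence $g_i\cdot\vp\to g\cdot\vp$ into a group-theoretic factorization $g_i = g\delta_i(du)_i$ with $\delta_i\to e$ in $G$ and $(du)_i\in DU$, yielding $g_i(du)_i^{-1}\to g$. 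Fix $m_i\to e$ in $M\setminus H$; since $G_{I_{n-1}} = H$, the $E_{n-1}$-coordinate $Y_i = m_i\cdot I_{n-1}$ satisfies $Y_i\neq I_{n-1}$ and $Y_i\to I_{n-1}$, while $p_i := m_i(n)\pline{\ve_1}\to\pline{\ve_1}$. Split into cases according to whether, along a subsequence, $Y_i\notin L_{n-1}$ (Case A) or $Y_i\in L_{n-1}$ (Case B).

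In Case A, Lemma~\ref{lema:conj:u} produces $t_i\to\infty$ and a non-constant polynomial $\psi:\qp\to\qp^{n-1}$ with $u(st_i)Y_iu(-st_i)\to w_{n-1}(\psi(s))$ for every $s\in\qp$, while Proposition~\ref{prop:UhB} applied to $h_i = m_i(n)\to e$ gives $w_1(st_i)p_i\to\pline{\ve_1}$ for all $s\in\qp$ except at most one $s^\ast$. Setting $\psi'_j(s) = \sum_{k=j}^{n-1}\psi_k(s)$ for $j<n$ and $\psi'_n(s)=0$, the element $w(\psi'(s))\in U^\perp$ satisfies $w(\psi'(s))\cdot\vp = (w_{n-1}(\psi(s)),\pline{\ve_1}) = \lim_i u(st_i)m_i\cdot\vp$ for every $s\neq s^\ast$. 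The local-section factorization then gives $u(st_i)m_i(du)_i^{-1}\to w(\psi'(s))$, hence $w(\psi'(s))\in\cl{DUMDU}\cap U^\perp$ for every $s\neq s^\ast$, and Lemma~\ref{st-q} upgrades this (using non-constancy of $\psi'$) to a nontrivial one-parameter subgroup of $U^\perp$ inside the closed subgroup generated.

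In Case B one has $Y_i = w_{n-1}(\vect{b}_i)$ with $\vect{b}_i\in\qp^{n-1}\setminus\{0\}$ tending to $0$. Choose $\alpha_i\in\qp^\times$ with $\abs{\alpha_i}\to\infty$ so that, along a subsequence, $\alpha_i^2\vect{b}_i\to\vect{b}\neq 0$; then $d(\alpha_i)Y_id(\alpha_i)^{-1} = w_{n-1}(\alpha_i^2\vect{b}_i)\to w_{n-1}(\vect{b})$ and $d_1(\alpha_i)p_i\to\pline{\ve_1}$. Defining $\vect{b}'_j = \sum_{k=j}^{n-1}\vect{b}_k$ for $j<n$ and $\vect{b}'_n=0$, the same local-section argument yields $d(\alpha_i)m_i(du)_i^{-1}\to w(\vect{b}')\in\cl{DUMDU}\cap U^\perp$. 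Replacing $\alpha_i$ by $\alpha_i\beta$ for arbitrary $\beta\in\qp^\times$ produces $w(\beta^2\vect{b}')\in\cl{DUMDU}\cap U^\perp$ for every $\beta\in\qp^\times$; since $\mathrm{char}(\qp)=0$, the identity $2ab = (a+b)^2-a^2-b^2$ shows that the additive subgroup of $\qp$ generated by $(\qp^\times)^2$ is all of $\qp$, so the closed subgroup generated contains the full one-parameter subgroup $\{w(t\vect{b}'):t\in\qp\}$ of $U^\perp$.

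The principal new ingredient relative to Proposition~\ref{prop:UMU} is Case B: the $U$-flow of Lemma~\ref{lema:conj:u} is trivial when $Y_i\in L_{n-1}$, so one must genuinely invoke the $D$-conjugation to blow up $\vect{b}_i$ into a nontrivial limit in $U^\perp$. This is precisely why the hypothesis relaxes from $e\in\cl{M\setminus\nor(U)}$ to $e\in\cl{M\setminus H}$ and $UMU$ is strengthened to $DUMDU$, and it also explains why the conclusion loses the ``$wDw^{-1}$'' alternative present in Proposition~\ref{prop:UMU}.
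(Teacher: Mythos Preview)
Your proof is correct. It follows the same overall strategy as the paper---the same case split and the same three tools (Lemma~\ref{lema:conj:u}, Proposition~\ref{prop:UhB}, Lemma~\ref{st-q})---but the packaging differs in two ways worth noting. First, you set up an explicit $G$-space $V=E_{n-1}\times\cP$ with point stabilizer $DU$ and run the whole argument there, in direct parallel with Proposition~\ref{prop:UMU}; the paper instead writes $g_i=X_ih_i$ with $X_i\in G_{\{1,\dots,n-1\}}$ and $h_i\in H$ and argues directly in $G/DU$ without naming a model. Second, in the degenerate case the paper's treatment (its first case) is shorter than your Case~B: rather than taking a sequence $m_i\to e$ in $U^\perp H$ and blowing up with $d(\alpha_i)$, it fixes a \emph{single} element $vh\in M$ with $v\in U^\perp\setminus\{e\}$, uses the constant-sequence clause of Proposition~\ref{prop:UhB} to find $u_i\in U$ with $u_ihDU\to DU$, notes $u_ivh=vu_ih$ so that $v\in\cl{UMDU}$ already, and only then conjugates by $D$. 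Your renormalization argument is equally valid, and your uniform $V$-model makes the analogy with Proposition~\ref{prop:UMU} more transparent; the paper's version is a touch more economical in the easy case.
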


\begin{proof} 
  First we suppose that $e\not\in\cl{M\setminus U^\perp H}$.  Since
  $e\in\cl{M\setminus H}$, there exit $v\in U^\perp\setminus\{e\}$ and
  $h\in H$ such that $v h\in M$. By Proposition~\ref{prop:UhB},
  applied to $H\cong \SL_2(\qp)$ and $DU\cong B$, there exists a
  sequence $\{u_i\}\subset U$ such that $u_ihDU\to eDU$ in $H/DU$.
  Hence
  \begin{equation*}
    u_ivhDU=vu_ihDU\to vDU, \quad \text{as $i\to\infty$}.
  \end{equation*}
  Therefore $v\in \cl{UMDU}$. We can write $v=w(\vt)$,
  $\vt\in\qp^n\setminus\{0\}$. Then $d(a) v d(-a)=w(a^2\vt)$ for all
  $a\in\qp^\times$. By Lemma~\ref{st-q}, the closure of the additive
  subgroup generated by $\{a^2\vt:a\in\qp\}$ in $\qp^n$ contains
  $\qp\vt$. Hence the subgroup generated by $\cl{DUMDU}\cap W$
  contains a nontrivial one-parameter subgroup of $U^\perp$.

  Now we may assume that $e\in\cl{M\setminus U^\perp H}$. Let a
  sequence $\{g_i\}\subset M\setminus U^\perp H$ be such that $g_i\to
  e$. Since $G=G_{\{1,\ldots,n-1\}}H$, we can write $g_i=X_ih_i$,
  where $X_i\in G_{\{1,\ldots,n-1\}}\setminus U^\perp$, $X_i\to 0$,
  and $h_i\to e$ in $H$. By Lemma~\ref{lema:conj:u}, after passing to
  a subsequence, there exist a sequence $t_i\to\infty$ in $\qp$ and a
  non-constant polynomial map $\psi:\qp\to\qp^n$ of degree at most $2$
  such that for any $s\in\qp$,
  \begin{equation} \label{eq:uXu}
    \lim_{i\to\infty} u(st_i)X_iu(-st_i) =
    w(\psi(s))\in U^\perp.
  \end{equation}
  
  By Proposition~\ref{prop:UhB}, there exists at most one
  $s^\ast\in\qp$ such that for all $s\in \qp$ with $s\neq s^\ast$, the
  following holds:
  \begin{equation} \label{eq:uhDU}
    u(st_i)h_i(DU)\to DU, \qquad \mbox{as $i\to\infty$.}
  \end{equation}
  
  By \eqref{eq:uXu} and \eqref{eq:uhDU}, $\forall s\in\qp$ with $s\neq
  s^\ast$, as $i\to\infty$,
  \begin{equation*}
    u(st_i)g_iDU=(u(st_i)X_iu(-st_i))(u(st_i)h_iDU)\to w(\psi(s))DU,
  \end{equation*}
  in $G/DU$.  Thus $w(\psi(s))\in \cl{UMDU}$, $\forall s\in\qp$. Since
  $W\cong \qp^n$, and $\psi(s)$ is a non-constant polynomial map, the
  conclusion of this proposition follows from Lemma~\ref{st-q}.
\end{proof}

\begin{theo} \label{thm:DU-minimal}
  Let $X$ be a $U$-minimal subset of $G/\Gamma$. Then either $X$ is a
  closed orbit of $wHw\inv$ for some $w\in U^\perp$, or $X$ is
  invariant under a nontrivial one-parameter subgroup of $U^\perp$.
\end{theo}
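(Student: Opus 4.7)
First, I would apply Theorem~\ref{thm:U-minimal}: either $X$ is invariant under a nontrivial one-parameter subgroup of $U^\perp$---in which case the second alternative of the conclusion holds and I am done---or $X$ is invariant under $vDv\inv$ for some $v\in U^\perp$. In the latter case, I set $Y:=v\inv X$, which remains compact and $U$-minimal, and is now additionally $DU$-invariant. It suffices to show $Y$ is a closed $H$-orbit, since then $X=vY=(vHv\inv)(vy)$ is a closed $vHv\inv$-orbit with $v\in U^\perp$, yielding the first alternative.

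Mimicking the structure of the proof of Theorem~\ref{thm:U-minimal}, let $M:=\{g\in G:gY\cap Y\neq\emptyset\}$; by $U$-minimality, any $g\in M\cap\nor(U)$ satisfies $gY=Y$, so $M\cap\nor(U)$ is a closed subgroup containing $DU$. I split according to whether $e\in\cl{M\setminus H}$. If yes, Proposition~\ref{prop:DUMDU} supplies a nontrivial one-parameter subgroup $V\subset U^\perp$ inside the closed subgroup generated by $\cl{DUMDU}\cap W$, and Lemma~\ref{lema:top1} (with $F=U$, $P=P'=DU$, $Y=Y'=Y$) promotes this to $VY=Y$. Since $U^\perp$ is abelian and $v\in U^\perp$, $X=vY$ is then also $V$-invariant, giving the second alternative.

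In the opposite case $e\notin\cl{M\setminus H}$, I pick a neighbourhood $\Omega$ of $e$ with $\Omega\cap M\subset H$, fix $x\in Y$, and observe that any $g\in\Omega$ with $gx\in Y$ lies in $M\cap\Omega\subset H$. Lemma~\ref{lema:top2} (with $F=U$, $P=H$, $y=x$, $\Phi=\Omega$) then gives that $\tilde Y:=\cl{\eta(U)}$ is compact in $H/H_x$, where $\eta:H\to H/H_x$ is the quotient map and $H_x$ is the stabilizer of $x$ in $H$. Using that the natural injection $\iota:H/H_x\to G/\Gamma$, $hH_x\mapsto hx$, is continuous, and that $Y=\cl{Ux}$ is $U$-minimal, one checks $\iota(\tilde Y)=Y$; hence $Y\subset Hx$ is compact and $\tilde Y$ is a compact $DU$-invariant $U$-minimal subset of $H/H_x$, with $U\cap H_x=\{e\}$ by Proposition~\ref{prop:cocomp}. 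Invoking the $n=1$ analog of the whole theory then yields $\tilde Y=H/H_x$, so $H_x$ is a cocompact lattice in $H$ and $Y=Hx$ is a closed $H$-orbit, giving the first alternative.

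The main obstacle is precisely this last step. Corollary~\ref{cor:n1} assumes cocompactness of the discrete subgroup a priori, whereas here $H_x$ is only a discrete subgroup of $\SL_2(\qp)$ with $U\cap H_x=\{e\}$; the existence of the compact invariant $\tilde Y$ must itself be converted into cocompactness. One way to bridge the gap is to exploit amenability of $DU$ to produce a $DU$-invariant probability measure on $\tilde Y$, then use a Furstenberg--Mautner style rigidity statement for $\SL_2(\qp)/H_x$ (in the spirit of Proposition~\ref{prop:horo}) together with the absence of periodic $U$-orbits to conclude that this measure is actually $H$-invariant, whence $H_x$ is a lattice; the argument of Proposition~\ref{prop:horo} then forces $\tilde Y=H/H_x$. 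This interplay between compactness, minimality and the structure of discrete subgroups of $\SL_2(\qp)$ is the delicate step and cannot be shortcut by the Margulis-style manipulations used in the rest of the proof.
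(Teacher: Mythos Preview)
Your overall structure matches the paper's: reduce via Theorem~\ref{thm:U-minimal}, pass to $Y=v\inv X$, split on whether $e\in\cl{M\setminus H}$, and in the affirmative case combine Proposition~\ref{prop:DUMDU} with Lemma~\ref{lema:top1}. The divergence, and the resolution of what you call the ``main obstacle'', lies entirely in how Lemma~\ref{lema:top2} is applied in the case $e\notin\cl{M\setminus H}$.

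You invoke Lemma~\ref{lema:top2} with $F=U$, obtaining only that $\cl{\eta(U)}$ is compact in $H/H_x$, and are then left trying to bootstrap this to cocompactness of $H_x$ via amenability and measure rigidity. The paper instead applies Lemma~\ref{lema:top2} with $F=DU$. This is legitimate because $Y$, being $U$-minimal and $DU$-invariant, is automatically $DU$-minimal: any nonempty closed $DU$-invariant subset of $Y$ is in particular closed and $U$-invariant, hence equals $Y$ by $U$-minimality. With $F=DU$ and $P=H$, Lemma~\ref{lema:top2} yields directly that $\cl{DU\cdot\Delta}/\Delta$ is compact in $H/\Delta$, where $\Delta=H_y$. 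Since $H/DU$ is compact (it is the projective line over $\qp$), writing $H=C\cdot DU$ for a compact set $C$ shows at once that $H/\Delta$ is compact. Now Proposition~\ref{prop:horo} applies verbatim (with $H\cong\SL_2(\qp)$, $\Delta$, $U$, $D$ in place of $G$, $\Gamma$, $W$, $D$) to give $\cl{DUy}=Hy$, and hence $Y=\cl{Uy}\subset\cl{DUy}=Hy\subset Y$, so $Y=Hy$ is a compact $H$-orbit.

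So your assertion that this step ``cannot be shortcut by the Margulis-style manipulations'' is not right: the shortcut is precisely to feed the $D$-invariance you already possess into Lemma~\ref{lema:top2}, rather than discarding it by taking $F=U$. No amenability or Furstenberg--Mautner argument is needed.
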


\begin{proof} 
  By Theorem~\ref{thm:U-minimal}, we are reduced to considering the
  case that $X$ is $wDw\inv$-invariant for some $w\in W$.

  We put $Y=w\inv X$. Then $Y$ is $DU$-invariant and $U$-minimal.  Let
  \begin{equation*}
    M=\{g\in G: gY\cap Y\neq\emptyset\}.
  \end{equation*} 
  
  By Lemma~\ref{lema:top1}, applied to $Y'=Y$, $P=P'=DU$ and $F=U$, we
  have that $Y$ is invariant under the subgroup generated by
  $\cl{DUMDU}\cap \nor(U)$.
  
  Now if $e\in\cl{M\setminus H}$ then by Proposition~\ref{prop:DUMDU},
  there exists a nontrivial one-parameter subgroup, say $V$, of
  $U^\perp$ such that $VY=Y$. Therefore
  \begin{equation*}
    VX=V(w\inv Y)=w\inv(VY)=w\inv Y=X,
  \end{equation*}
  and the conclusion of the theorem holds.
  
  Next suppose that $e\not\in \cl{M\setminus H}$. Fix $y\in Y$ and let
  $\Delta=H_y$. Then by Lemma~\ref{lema:top2}, $\cl{DU\Delta}/\Delta$ is
  compact in $H/\Delta$. Since $H/DU$ is compact, we have that
  $H/\Delta$ is compact. Therefore by Proposition~\ref{prop:horo}
  applied to the case of $G:=H\cong \SL_2(\qp)$, $\Gamma:=\Delta$,
$W:=U$, and $D:=D$, we conclude that $\cl{DU\Delta}=H$. Since $Hy\cong
H/\Delta$, we have that $Hy$ is compact and $Hy=\cl{DUy}=Y$. Hence
$X=(w Hw\inv)(wy)$, which is a closed orbit of $wHw\inv$.
\end{proof}

\subsection{Minimal sets for actions of at least $2$ dimensional
  subgroups of $W$}

\begin{rem} \label{rem:prod} 
  For any $x\in G/\Gamma$, there exists
  $g_j\in G_{\{\{j\}\}}$ for $1\leq j\leq n$ such that $x=(g_1\dots
  g_n)\Gamma$, and
  \begin{equation*}
    G_x=(g_1\Gamma_1g_1\inv)\cdots(g_n\Gamma_n g_n\inv).
  \end{equation*}
  In particular, for any $J\subset\{1,\ldots,n\}$, we have
  \begin{equation} \label{eq:GJ_x}
    (G_J)_x\cong \prod_{j\in J} G_{\{j\}}/g_j\Gamma_jg_j\inv.
  \end{equation}
\end{rem}

For $\cJ\in\cC$, define $\cup\cJ=\cup_{J\in\cJ} J$, $\num{\cJ}=\num{\cup\cJ}$,
$G_{\cJ}=G_{\cup\cJ}$, $W_{\cJ}=W\cap G_{\cJ}$, $U_{\cJ}=W\cap
H_{\cJ}=\prod_{J\in\cJ} U_J$, where $U_J=W\cap H_J$, and
$D_{\cJ}=A\cap H_{\cJ}=\prod_{J\in\cJ} D_J$, where $D_J=D\cap H_J$.

\begin{theo} \label{thm:std} 
  Assume that for any $k<n$ the Theorem~\ref{thm:closure} is true for
  $k$ in place of $n$. Let $\cJ\in \cC$ such that
  $\cJ\neq\{\{1,\ldots,n\}\}$. Then for any $x=G/\Gamma$, we have
  $\cl{U_{\cJ}x}=wH_{\cJ'}w\inv x$ for some $\cJ'\in\cC$ and $w\in W$.
\end{theo}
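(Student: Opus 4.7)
The plan is to reduce directly to the inductive hypothesis by splitting both the acting group and the ambient space along the blocks of $\cJ$. The crucial point is that because the elements of $\cJ$ are pairwise disjoint and $\cJ\neq\{\{1,\dots,n\}\}$, every $J\in\cJ$ must satisfy $\num{J}<n$; this is exactly where the assumption on $\cJ$ enters, and it is what makes Theorem~\ref{thm:closure} available inside each block.

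First I would use Remark~\ref{rem:prod} to write $x=g\Gamma$ with $g=g_1\cdots g_n$ and to identify $G/\Gamma$ with $\prod_{j=1}^{n}G_{\{j\}}/\Lambda_j$, where $\Lambda_j=g_j\Gamma_j g_j\inv$ is again a cocompact lattice in $G_{\{j\}}$. Since $U_{\cJ}\subset G_{\cJ}$ has the product form $U_{\cJ}=\prod_{J\in\cJ}U_J$, acts trivially on the coordinates $j\notin\cup\cJ$, and acts block-by-block on
\begin{equation*}
G_{\cJ}/\textstyle\prod_{j\in\cup\cJ}\Lambda_j\;\cong\;\prod_{J\in\cJ}\bigl(G_J/\textstyle\prod_{j\in J}\Lambda_j\bigr),
\end{equation*}
this gives
\begin{equation*}
\cl{U_{\cJ}x}\;\cong\;\Bigl(\prod_{J\in\cJ}\cl{U_J\,x_J}\Bigr)\times\{x_{\mathrm{rest}}\},
\end{equation*}
where $x_J$ denotes the projection of $x$ to $G_J/\prod_{j\in J}\Lambda_j$ and $x_{\mathrm{rest}}$ collects the coordinates $j\notin\cup\cJ$.

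Next, for each $J\in\cJ$ I would observe that the action of $U_J$ on $G_J/\prod_{j\in J}\Lambda_j$ is precisely the setup of Theorem~\ref{thm:closure} with $\num{J}$ in place of $n$ and the cocompact lattices $\{\Lambda_j:j\in J\}$ in place of $\Gamma_1,\dots,\Gamma_n$. Since $\num{J}<n$, the inductive hypothesis supplies a partition $\mathcal{K}_J$ of $J$ and an element $v_J\in W\cap G_J$ with $\cl{U_J x_J}=(v_J H_{\mathcal{K}_J}v_J\inv)x_J$.

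Finally I would reassemble: set $\cJ'=\bigsqcup_{J\in\cJ}\mathcal{K}_J\in\cC$ and $w=\prod_{J\in\cJ}v_J\in W$. Because the factors $v_J H_{\mathcal{K}_J}v_J\inv$ lie in commuting disjoint coordinate blocks, $wH_{\cJ'}w\inv=\prod_{J\in\cJ}v_JH_{\mathcal{K}_J}v_J\inv$, and applying this product to $x$ recovers the product of block closures computed above. The hard part is essentially bookkeeping: verifying the product-of-closures identity (which is automatic for a finite Cartesian product of Hausdorff spaces) and checking that Theorem~\ref{thm:closure} applied with the conjugate lattices $\Lambda_j$ in place of $\Gamma_j$ is legitimate, which it is because the theorem is stated for an arbitrary collection of cocompact discrete subgroups.
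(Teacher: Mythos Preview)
Your argument is correct and follows the same underlying idea as the paper---reduce to the blocks of $\cJ$ and invoke the case $k<n$ of Theorem~\ref{thm:closure}---but your execution is cleaner. The paper proceeds by an inner induction on $n$: it peels off a subcollection $\cJ_1\subsetneq\cJ$ with $\num{\cup\cJ_1}<n$, handles $U_{\cJ_1}$ either by Theorem~\ref{thm:closure} (if $\cJ_1$ is a singleton) or by the inductive hypothesis of Theorem~\ref{thm:std} itself (if not), and then repeats for $\cJ_2=\cJ\setminus\cJ_1$ when $\cJ\in\cC_0$. You instead observe directly that every $J\in\cJ$ has $\num{J}<n$, decompose $U_\cJ=\prod_{J\in\cJ}U_J$ acting on a genuine product in one stroke, and apply Theorem~\ref{thm:closure} to each $U_J$ on $G_J/\prod_{j\in J}\Lambda_j$. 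This bypasses the paper's inner induction and its case distinction on whether $\cJ_1$ is a singleton; the price is only the (routine) verification that the closure of a product orbit in a finite product of compact Hausdorff spaces is the product of the closures, which you note.
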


\begin{proof} 
  We intend to prove this result by induction on $n$.
  
  By our choice of $\cJ$ there exists $\cJ_1\subset \cJ$, where
  $\cJ_1\in \cC$ and $1\leq n_1:=\num{\cup\cJ_1}<n$. Put $G_1=G_{\cJ_1}$
  and $U_1=U_{\cJ_1}$.
  
  By Remark~\ref{rem:prod}, for any $y\in G/\Gamma$,
  \begin{equation*}
    G_1 y\cong \prod_{j\in\cup\cJ_1} G_{\{j\}}/(G_{\{j\}})_y.
  \end{equation*} 
  
  We claim that there exists $\cJ_1'\in\cC$ and $w_1\in W_{\cJ_1}$ such
  that, if we put $H_1=w_1H_{\cJ_1'}w_1\inv$ then
  \begin{equation} \label{eq:J1}
    \cl{U_1x}=H_1 x.
  \end{equation}
  Here $U_1\subset H_1\subset G_1$.
  
  If $\cJ_1=\{\{\cup \cJ_1\}\}$, then the claim follows by applying the
  assumption that Theorem~\ref{thm:closure} is valid for $n_1<n$, $G_1$
  in place of $G$, and $U_1$ in place of $U$.
  
  If $\cJ_1\neq \{\{\cup \cJ_1\}\}$ then the claim follows by applying
  the induction hypothesis of this theorem to $n_1<n$ in place of $n$,
  $G_1$ in place of $G$, and $\cJ_1$ in place of $\cJ$. Thus the claim
  is proved in all the cases.
  
  If $\cJ\not\in\cC_0$, then $\num{\cJ}<n$, and hence if we choose
  $\cJ_1=\cJ$ then the conclusion of the theorem follows from
  \eqref{eq:J1}.
  
  Therefore we can assume that $\cJ\in\cC_0$. Let $\cJ_2=\cJ\setminus
  \cJ_1\neq\emptyset$. Then
  $n_2=\num{\cJ_2}=\num{\cJ}-\num{\cJ_1}=n-n_1<n$. By the same argument
  as above for $J_2$ in place of $J_1$ the following holds: there exists
  $\cJ_2'\in\cC$ and $w_2\in W_{\cJ_2'}$ such that, if we put
  $G_2=G_{\cJ_2}$, $U_2=U_{\cJ_1}$ and $H_2=w_2H_{\cJ_2'}w_2\inv$, then
  $U_2\subset H_2\subset G_2$ and
  \begin{equation} \label{eq:J2}
    \cl{U_{\cJ_2}x}=H_2 x.
  \end{equation}
  
  Since $(\cup\cJ_1)\cap(\cup\cJ_2)=\emptyset$, we have that
  $G_{\cJ_1}\subset Z_G(G_{\cJ_2})$. Therefore for any $g_2\in
  G_{\cJ_2}$, we have
  \begin{equation} \label{eq:g2}
    \cl{U_{\cJ_1}g_2x}=\cl{g_2 U_{\cJ_1} x}=g_2(w_1H_{\cJ_1'}w_1\inv x)
    =w_1H_{\cJ_1'}w_1\inv (g_2x).
  \end{equation}
  Moreover
  \begin{equation} \label{eq:H1H2}
    H_1H_2x\cong H_1/(H_1)_x\times H_2/(H_2)_x
  \end{equation}
  which is compact. Hence by \eqref{eq:J1}--\eqref{eq:H1H2},
  \begin{equation*}
    \cl{U_{\cJ}x}=\cl{U_1U_2x}=\cl{U_1H_2 x}=H_1H_2x=wH_{\cJ'}w\inv x,
  \end{equation*}
  where $w=w_1w_2$ and $\cJ'=\cJ_1'\cup \cJ_2'$. This completes the
  proof of the theorem.  
\end{proof}  

\begin{rem}
  By the condition of Theorem~\ref{thm:std}, $n\geq 2$. Therefore to
  begin the induction, we have $n=2$ and for this case
  $\cJ=\{\{1\},\{2\}\}$, $\cJ_1=\{\{1\}\}$ and $\cJ_2=\{\{2\}\}$, and
  the result follows from the assumption that
  Theorem~\ref{thm:closure} is valid for $n=1$; in fact, this
  assumption was verified in Corollary~\ref{cor:n1}.
\end{rem}

\begin{theo}  \label{thm:multi}
  Assume that for all $k<n$, the Theorem~\ref{thm:closure} is true for
  $k$ in place of $n$. Let $V$ be a multi-parameter subgroup of $W$ of
  dimension at least $2$ and containing $U$. Let $X$ be a compact
  $V$-minimal subset of $G/\Gamma$. Then there exists $\cJ\in\cC_0$
  and $w\in W$ such that $X=(wH_{\cJ}w\inv)x$.
\end{theo}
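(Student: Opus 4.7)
The plan is to pick a compact $U$-minimal subset $Y \subset X$ (which exists by Zorn's lemma, since $X$ is compact and $U$-invariant) and analyze $Y$ via Theorem~\ref{thm:DU-minimal}, yielding two cases: (A) $Y = w H w\inv y_0$ is a closed $w H w\inv$-orbit for some $w \in U^\perp$ and $y_0 \in Y$; or (B) $Y$ is invariant under a nontrivial one-parameter subgroup $V_0 \subset U^\perp$. In Case (B), $U$-minimality of $Y$ together with $V_0$-invariance forces $Y$ itself to be $UV_0$-minimal; I would then induct on $\dim V$, applying Theorem~\ref{thm:multi} to the smaller multi-parameter subgroup $UV_0$ in place of $V$ to conclude $Y = w H_{\cJ_0} w\inv y_0$ for some $\cJ_0 \in \cC_0$ and $w \in W$. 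This reduces us to a refined version of Case (A).

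After conjugating $X$ by $w\inv$ (which preserves $V$-minimality since $W$ is abelian, so $w\inv V w = V$), assume $Y = H_{\cJ_0} y_0$. Then $X = \overline{V y_0}$, and for each $v \in V' := V \cap U^\perp$ we have $vY = (v H_{\cJ_0} v\inv)(v y_0) \subset X$, so $X$ contains a whole family of closed orbits of the conjugates $v H_{\cJ_0} v\inv$. The closed subgroup $\tilde H$ of $G$ generated by $H_{\cJ_0}$ and all the conjugates $\{v H_{\cJ_0} v\inv : v \in V'\}$ can be identified, via a Lie-bracket computation with $[\Lie(V'), \Lie(H_{\cJ_0})]$, as $H_{\cJ'}$ for the unique coarsening $\cJ' \in \cC_0$ of $\cJ_0$ obtained by amalgamating blocks $J, J'' \in \cJ_0$ whenever some element of $V'$ has nonzero components in both $J$ and $J''$.

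To conclude $X = H_{\cJ'} y_0$, I would apply Lemma~\ref{lema:top1} (with $F = U$, $P = H_{\cJ_0}$, $P'$ the setwise stabilizer of $X$ in $G$, $Y' = X$, and $M = \{g \in G : gY \cap X \neq \emptyset\} \supset V$) together with a variant of Proposition~\ref{prop:UMU} adapted to $H_{\cJ_0}$ in place of $H$ to extract the extra invariance and show $\tilde H y_0 \subset X$. Since $\tilde H y_0 = H_{\cJ'} y_0$ is closed (as a compact orbit inside $G/\Gamma$) and $V$-invariant, $V$-minimality of $X$ forces the desired equality $X = H_{\cJ'} y_0$.

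The main obstacle will be the Lie-algebraic identification of $\tilde H$ as precisely $H_{\cJ'}$, and not some larger subgroup containing spurious $\qp^\times$-dilations or extra $W$-directions; this requires careful bookkeeping of iterated brackets between the nilpotent $\Lie(V')$ and the semisimple $\Lie(H_{\cJ_0}) \subset \Lie(G)$. A secondary difficulty is managing the induction on $\dim V$ in Case (B), where the base case $\dim V = 2$ must be handled so that the reduction from $V$ to $UV_0$ does not require an infinite descent --- in particular, one must allow the inductive application of Theorem~\ref{thm:multi} to operate on multi-parameter subgroups that may differ from $V$ itself but share the property $\supset U$ with $\dim \geq 2$.
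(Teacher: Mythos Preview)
Your strategy diverges from the paper's and has two real gaps. First, the induction on $\dim V$ in Case~(B) does not terminate: applying the theorem to the pair $(UV_0, Y)$ always lands at $\dim(UV_0) = 2$, so when $\dim V = 2$ you are invoking the statement being proved (for a possibly different two-dimensional subgroup, but at the same level of the induction). Iterating Theorem~\ref{thm:DU-minimal} does not help either, since its Case~(B) only ever produces one $U^\perp$-direction and gives no further structure on $Y$. Second, even granting $Y = H_{\cJ_0} y_0 \subset X$, your passage to $\tilde H y_0 \subset X$ is not supplied by Lemma~\ref{lema:top1} in the form you invoke it: with $Y' = X \neq Y$ the lemma only yields $gY \subset X$ for $g \in N_G(U) \cap \overline{P'MP}$, and for your choice of $P = H_{\cJ_0}$, $P' \supset V$, $M \supset V$ this produces nothing beyond $g \in V U_{\cJ_0}$, which is already known. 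The conjugated orbits $(v H_{\cJ_0} v^{-1})(v y_0)$ pass through \emph{different} basepoints $v y_0$ and do not assemble into a single $\tilde H$-orbit through $y_0$ without further input; nor have you shown $V \subset \tilde H$, which you need for the final $V$-minimality step.

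The paper's route avoids both issues. It begins by replacing $V$ with the \emph{maximal} multi-parameter subgroup of $W$ preserving $X$; this maximality is the key lever. It then sets $V_1 = V \cap G_{\{1,\dots,n-1\}}$ and takes $Y$ to be $V_1$-minimal (not $U$-minimal). Since $V_1 \subset G_J$ with $|J| < n$, the standing hypothesis together with induction on $n$ (not on $\dim V$) identifies $\overline{(aV_1a^{-1}) y_1}$ as a closed $w_1 H_{\cJ} w_1^{-1}$-orbit inside $G_J y$. This places $w D_{\cJ} w^{-1} y$ inside $X$, and now Lemma~\ref{lema:top1} is applied with $Y = Y' = X$, $F = V$, and $M = w D_{\cJ} w^{-1}$: since nontrivial elements of $D_{\cJ}$ do not centralize $U$, Lemma~\ref{lema:conj:u} shows that the closed group generated by $\overline{U D_{\cJ} U} \cap W$ contains $U_{\cJ}$, whence $U_{\cJ} X = X$. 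Maximality of $V$ then forces $U_{\cJ} \subset V$, and a short comparison yields $V = U_{\cJ'}$ exactly, after which Theorem~\ref{thm:std} finishes. The missing idea in your approach is to use the diagonal torus $D_{\cJ}$---rather than $H_{\cJ_0}$ or conjugates thereof---as the seed $M$ in Lemma~\ref{lema:top1}, combined with the maximality trick that converts ``$X$ is $U_{\cJ}$-invariant'' into an exact identification of $V$.
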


\begin{proof} 
  Without loss of generality we may assume that $V$ is the
  largest multi-parameter subgroup of $W$ whose action preserves $X$.
  
  If $n=2$ then $V=W$ and the theorem follows from
  Theorem~\ref{thm:std}. We intend to prove this theorem by induction on
  $n$.
  
  Let $V_1=V\cap G_{\{1,\ldots,n-1\}}$. Then $V=V_1U$, and $\dim V_1\geq
  1$ (see Definition~\ref{def:multi-para}). Let $J$ be the smallest
  subset of $\{1,\ldots,n-1\}$ such that $V_1\subset G_J$. Then there
  exists $a\in A\cap G_J$ such that $U_J \subset aV_1a\inv$.
  
  Let $Y$ be a compact $V_1$-minimal subset of $X$. Take $y\in Y$. Then
  by Remark~\ref{rem:prod}, $G_Jy$ is compact, and 
  \begin{equation*}
    G_Jy\cong G_J/\prod_{j\in J} (G_{\{j\}})_y.  
  \end{equation*}
  In particular, $Y\subset G_Jy$.
  
  Let $y_1=ay$. We claim that there exists $\cJ\in\cC$ and $w_1\in W\cap
  G_J$ such that $H_\cJ\subset G_J$ and
  \begin{equation} \label{eq:multi2}
    \cl{(a V_1 a\inv) y_1}=w_1H_{\cJ}w_1\inv y_1.
  \end{equation}
  
  If $\dim V_1=1$, then $U_J=aV_1a\inv$. Since $\num{J}<n$, the claim
  follows from our first hypothesis that Theorem~\ref{thm:closure} is
  valid for $\num{J}$ in place of $n$, $G_J$ in place of $G$, and $U_J$ in
  place of $U$.
  
  If $\dim V_1\geq 2$, then the claim follows from the induction
  hypothesis of this theorem applied to $G_J$ in place of $G$ and
  $aV_1a\inv$ in place of $V$ in the statement. This completes the proof
  of the claim in both the cases.
  
  From \eqref{eq:multi2} we have that
  \begin{equation*}
    Y\supset \cl{V_1y}=\cl{V_1 a\inv y_1} = a\inv
    \cl{(aV_1a\inv)y_1}=a\inv w_1H_{\cJ}w_1\inv a y.
  \end{equation*}
  Let $w=a\inv w_1 a\in W\cap G_J$. Then $w D_{\cJ}w\inv y\subset
  Y\subset X$, where $D_{\cJ}=A\cap H_\cJ=\prod_{I\in\cJ}D_I$.
  
  Therefore by Lemma~\ref{lema:top1},
  \begin{equation} \label{eq:mult1}
    gX=X, \quad \forall g\in N_G(V)\cap \cl{V(wD_\cJ w\inv)V}.
  \end{equation}
  
  We have
  \begin{equation} \label{eq:mult2}
    N_G(V)\cap\cl{V(wD_\cJ w\inv)V}\supset W\cap \cl{UD_{\cJ}U},
  \end{equation}
  and
  \begin{equation} \label{eq:mult4}
    \cl{UD_{\cJ}U}\supset \cl{\{uD_{\cJ}u\inv:u\in
      U\}}=\prod_{I\in\cJ} \cl{\{uD_{I}u\inv:u\in U_{I}\}}.
  \end{equation}
  Take any $I\in\cJ$. Let $\{X_i\}$ be a sequence in $D_I\setminus\{e\}$
  such that $X_i\neq e$ as $i\to \infty$. In view of the identification
  $D_I\subset H_I\cong\SL_2(\qp)\subset \M_2(\qp)=E_1$, we have that
  \begin{equation*}
    \{X_i\}\subset E_1\setminus L_1
  \end{equation*}
  (recall \eqref{eq:E_m}).  We apply Lemma~\ref{lema:conj:u} to conclude
  the following: The subgroup generated by $\cl{\{uD_{I}u\inv:u\in
    U_{I}\}}$ contains $U_I$ (see Lemma~\ref{st-q}). Therefore by
  \eqref{eq:mult4}, the subgroup generated by $\cl{UD_{\cJ}U}$ contains
  $U_{\cJ}$. Therefore by \eqref{eq:mult1} and \eqref{eq:mult2},
  $U_{\cJ}X=X$. By the maximality of $V$, assumed in the beginning of
  the proof, $U_{\cJ}\subset V$. Thus $U_\cJ\subset G_J\cap V=V_1$.
  Therefore
  \begin{equation*}
    U_\cJ\subset V_1 \subset a\inv H_{\cJ}a\cap W=a\inv(H_\cJ \cap W) a=a
    U_\cJ a\inv.
  \end{equation*}
  Therefore $U_\cJ=a\inv U_\cJ a$, and hence $V_1=U_\cJ$. Thus $V=U_\cJ
  U=U_{\cJ'}$, where $\cJ'=\cJ\cup\{\{1,\ldots,n\}\}$. Now the theorem
  follows from Theorem~\ref{thm:std}.
\end{proof}

\section{Proof of Theorem~\ref{thm:closure}:}

We intend to prove Theorem~\ref{thm:closure} by induction on $n$.

The case of $n=1$ is proved in Corollary~\ref{cor:n1}.

As an induction hypothesis, we assume that Theorem~\ref{thm:closure}
is valid for all $k$ in place of $n$ in its statement, where $k\leq
n-1$. In particular, the hypothesis of Theorem~\ref{thm:multi} is
satisfied.

Let $X=\cl{Ux}$. Let $V$ denote a maximal multi-parameter subgroup of
$W$ such that $Vx'\subset X$ for some $x'\in X$. Let $Z$ be a compact
$V$-minimal subset contained in $\cl{Vx'}$. Therefore by
Theorem~\ref{thm:DU-minimal} and by Theorem~\ref{thm:multi}, there
exists $\cJ\in\cC_0$ and $w\in U^\perp$ such that $Z=wH_{\cJ} w\inv
z'$, where $z'\in Z$ and $V\subset wH_{\cJ}w\inv$.

Note that $w\inv X=\cl{Uw\inv x}$. Now if we can show that $w\inv
X=H_{\cJ'}(w\inv x)$, then $X=wH_{\cJ'}w\inv x$ and the conclusion of
the theorem follows. Therefore without loss of generality, we replace
$X$ by $w\inv X$, $Z$ by $w\inv Z$, and $z'$ by $w\inv z'$, and assume
that $Z=H_{\cJ} z'$.

If $H_{\cJ}=G$, then $X=G/\Gamma$ and the theorem is proved.

Therefore we can assume that $H_{\cJ}\cong \SL_2(\qp)^m$ for some
$m\leq n-1$. In view of \eqref{eq:GJ_x}, we have
\begin{equation} \label{eq:HJ_z}
  \Lambda:=(H_\cJ)_{z'}=\prod_{J\in\cJ} H_J/(H_J)_{z'}.
\end{equation}
We note that
\begin{equation} \label{eq:H/Lam-Z}
  H_\cJ/\Lambda\cong H_{\cJ}z'=Z
\end{equation} 
is compact. Therefore $H_J/(H_J)_{z'}$ is compact for all $J\in\cJ$.
Therefore by Proposition~\ref{prop:generic} applied to $H_{\cJ}$ in
place of $G$ and $\Lambda$ in place of $\Gamma$,
\begin{equation} \label{eq:z:generic}
  \exists z\in
  H_{\cJ}/\Lambda \setminus \cS(U,\Lambda).
\end{equation}
In view of \eqref{eq:H/Lam-Z} we treat $z$ as an element of $Z$, and
hence
\begin{equation} \label{eq:HJz-Z}
  H_{\cJ} z'=Z=H_{\cJ}z\subset X.
\end{equation}
We have made such a choice of $z\not\in\cS(U,\Lambda)$ because later in
the proof we intend to apply Theorem~\ref{thm:avoid2} for the
$U$-action on $H_{\cJ}/\Lambda$.

We define
\begin{equation*}
  \cJ^\ast=\{J\setminus \max\{J\}:J\in \cJ,\,\num{J}>1\}.
\end{equation*}
Now $\cJ\in\cC_0$. Therefore $G=G_{\cJ^\ast}\cdot H_{\cJ}$.

Since $z\in Z\subset \cl{Ux}$, there exists a sequence $g_i\to e$ in
$G$ such that $g_iz\in Ux$ for all $i$. We can express $g_i=X_ih_i$
such that $X_i\in G_{\cJ^\ast}$, $h_i\in H_{\cJ}$, and $X_i\to e$ and
$h_i\to e$. 

If $X_{i_0}\in W$ for some $i_0$, then
\begin{equation} \label{eq:Xi:nor(HJ)}
  H_{\cJ} z\subset X = \cl{Ug_iz}=\cl{X_{i_0}Uh_iz}\subset
  X_{i_0}H_{\cJ}
  z=X_{i_0}H_{\cJ}X_{i_0}\inv(X_{i_0}z).
\end{equation}
In particular, $z$ belongs to the closed orbit
$(X_{i_0}H_{\cJ}X_{i_0}\inv)(X_{i_0}z)$. Therefore
\begin{equation*}
  H_{\cJ}z\subset
  X_{i_0}H_{\cJ}X_{i_0}\inv(X_{i_0}z)=(X_{i_0}H_{\cJ}X_{i_0}\inv) z.
\end{equation*}
Hence $H_{\cJ}$ is an open subgroup of $X_{i_0}H_{\cJ}X_{i_0}\inv$.
Since $H_{\cJ}$ is Zariski closed, we have that
$H_{\cJ}=X_{i_0}H_{\cJ}X_{i_0}\inv$. Therefore the inclusions in
\eqref{eq:Xi:nor(HJ)} are equalities. Hence $X=H_{\cJ} z$, and the
conclusion of the theorem holds.

Now we may assume that $\{X_i\}\subset G_{\cJ^\ast}\setminus W$.  Put
$m=\num{\cup\cJ^\ast}$. In view of the identification,
$G_{\cJ^\ast}\cong \SL_2(\qp)^m$, we have that
\begin{equation*}
  \{X_i\}\subset M_2(\qp)^m\setminus L_m
\end{equation*}
(recall \eqref{eq:E_m}). Also the conjugation action of $u(t)$ on
$G_{\cJ^\ast}$ corresponds to the conjugation action of $w_m(t)$ on
$M_2(\qp)^m$. Therefore by Lemma~\ref{lema:conj:u}, there exists a
sequence $t_i\to\infty$ and a non-constant polynomial map $\psi:\qp\to
\qp^n$ such that for any sequence $s_i\to s$ in $\qp$,
\begin{equation} \label{eq:sti}
  \lim_{i\to\infty} u(s_it_i)X_iu(-s_it_i)=w(\psi(s))\in W_{\cJ^\ast}.
\end{equation}

If $Z$ were $U$-minimal, which would be the case if $H_\cJ\cong
\SL_2(K)$, or if $n=2$ and $m\leq n-1=1$. We would then apply
Lemma~\ref{lema:top1} for $Y'=X$, $Y=Z$, $P'=U$, $P=H_\cJ$ and $F=U$;
and conclude that $\Psi(s)X\subset X$. 

In general, we will have to go deeper into the proof of
Lemma~\ref{lema:top1} to see what is exactly required; and that turns
out to be Theorem~\ref{thm:avoid2} as shown below. 

In view of \eqref{eq:HJ_z} and \eqref{eq:HJz-Z}, we apply
Theorem~\ref{thm:avoid2} to $H_{\cJ}$ and $\Lambda$ in places of $G$
and $\Gamma$, respectively, and to the sequence
$\{x_i:=h_iz\}_{i\in\N}\subset Z$. Since $x_i\to z$ and $z\not\in
\cS(U,\Lambda)$ (see \eqref{eq:z:generic}), we conclude the following:
given any compact neighbourhood $\frakO$ of $0$ in $\qp$ and $s\in
\qp$, there exists a sequence $t_i'\in st_i(1+\frakO)$ such that,
after passing to a subsequence, $u(t_i')x_i\to y$ as $i\to\infty$,
where $y\in Z\cong H_{\cJ}/\Lambda$ and $y\not\in \cS(U,\Lambda)$.

Since $H_{\cJ}\cong \SL_2(\qp)^m$ for some $m\leq n-1$, by our
induction hypothesis, Theorem~\ref{thm:closure} is valid for $H_{\cJ}$
in place of $G$. Therefore, since $y$ is nonsingular for the $U$
action on $Z$, we conclude that
\begin{equation}
  \label{eq:Uy}
  \cl{Uy}=Z.
\end{equation}
Note that this is the second instance of the use of the induction
hypothesis in this proof.

We put $s_i=t_i'/t_i\in s(1+\frakO)$ for all $i$. Then $t_i'=s_it_i$,
and after passing to a subsequence, we may assume that $s_i\to s'$ and
$s'\in s(1+\frakO)$. Now by \eqref{eq:sti},
\begin{equation*}
  \begin{array}{ll}
    u(s_it_i)g_iz & =u(s_it_i)X_ix_i \\
    { } & =[u(s_it_i)X_iu(-s_it_i)]u(s_it_i)x_i \\
    { }           & \to w(\psi(s'))y.
  \end{array}
\end{equation*}
Thus $w(\psi(s'))y\in X$, and hence by \eqref{eq:Uy}
\begin{equation*}
  X\supset \cl{Uw(\psi(s'))y}=w(\psi(s'))\cl{Uy} = \Psi(s')Z.
\end{equation*}
Since $\frakO$ was an arbitrarily chosen neighbourhood of $0$, and $s'\in
s(1+\frakO)$, we conclude that
\begin{equation} \label{eq:X-wZ}
  X\supset w(\psi(s))Z,\qquad \forall s\in\qp.
\end{equation}

This finishes a major step in the proof, as we have obtained a
nontrivial trajectory of a polynomial set in $W_{\cJ^\ast}$. Now we
will use an idea from \cite{Dani+Mar:Oppen-Invent} to show that $X$
contains a trajectory of a nontrivial one-parameter subgroup of
$W_{\cJ^\ast}$.

Since $X$ is compact, there exists a sequence $T_i\to\infty$ in $\qp$
and $x'\in X$ such that
\begin{equation}
  \label{eq:x'}
  w(\psi(T_i))z\to x'.
\end{equation}
Then by Lemma~\ref{st-q},
\begin{equation*}
  \psi(T_i+sT_i^{-q})-\psi(T_i)\to s\vv, \qquad \forall s\in\qp,
\end{equation*}
where $q=\deg(\psi)-1\in\{0,1\}$ and $\vv\in\qp^n\setminus\{0\}$.
Therefore by \eqref{eq:x'}
\begin{equation*}
  w(\psi(T_i+sT_i^{-q}))z = w(\psi(T_i+sT_i^{-q}-\psi(T_i))w(\psi(T_i))z
  \to w(s\vv)x'.
\end{equation*}
Therefore, since $VZ=Z$, for any $u\in V$, by \eqref{eq:X-wZ},
\begin{equation*}
  X\ni w(\psi(T_i+sT_i^{-q}))uz=uw(\psi(T_i+sT_i^{-q}))z\to
  uw(s\vv)x'.
\end{equation*}
Thus $VV_1x'\subset X$, where $V_1=\{w(s\vv):s\in\qp\}$. We note that
$V\subset H_\cJ$ and $\psi(s)\in W_{\cJ^\ast}$. Therefore $V_1$ is a
nontrivial one-parameter subgroup of $W_{\cJ^\ast}$, which is not
contained in $V$. Thus $VV_1$ is a multi-parameter subgroup of $W$
which is strictly larger than $V$, and $VV_1x'\subset X$. This
contradicts the maximality property of $V$ assumed at the beginning of
the proof. This completes the proof of the theorem.
\qed

\section{$H$-orbit closures}

\begin{lema} \label{lema:wHw}
  If $D\subset wH_{\cJ}w\inv$ for some $w\in W$ and $\cJ\in\cC_0$ then
  $w\in H_{\cJ}$. 
\end{lema}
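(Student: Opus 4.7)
The plan is to reduce the statement to a direct matrix computation. Since $D \subset w H_{\cJ} w^{-1}$ is equivalent to $w^{-1} D w \subset H_{\cJ}$, and since $\cJ \in \cC_0$ is (by definition of $\cC$) a partition of $\{1,\dots,n\}$, I would start by parametrizing $w = w(\vt)$ for some $\vt = (t_1,\dots,t_n) \in \qp^n$ and computing the coordinates of $w^{-1} d(\alpha) w$ explicitly.

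The key computation is coordinate-wise: for each $j$,
\begin{equation*}
w_1(-t_j)\, d_1(\alpha)\, w_1(t_j) = \smat{\alpha & t_j(\alpha - \alpha^{-1}) \\ 0 & \alpha^{-1}} = d_1(\alpha)\, w_1\bigl(t_j(1-\alpha^{-2})\bigr).
\end{equation*}
Hence $w^{-1} d(\alpha) w = d(\alpha)\, w\bigl((1-\alpha^{-2})\vt\bigr)$ for every $\alpha \in \qp^\times$.

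Now I invoke the structure of $H_{\cJ}$: an element $(g_1,\dots,g_n) \in G$ lies in $H_{\cJ} = H_{J_1}\cdots H_{J_m}$ precisely when $g_i = g_j$ whenever $i,j$ lie in the same block $J_k$ of the partition $\cJ$ (there is no vanishing condition on components because $\cup\cJ = \{1,\dots,n\}$). Applied to $w^{-1} d(\alpha) w$, this forces the $j$-th coordinates $d_1(\alpha)\, w_1(t_j(1-\alpha^{-2}))$ to agree for all $j$ in a common block. Choosing any $\alpha \in \qp^\times$ with $\alpha^2 \neq 1$, so that $1-\alpha^{-2} \neq 0$, I conclude $t_i = t_j$ for all $i,j$ belonging to a common block of $\cJ$.

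Finally, writing $\vt = \sum_{k=1}^m \tau_k \chi_{J_k}$ for the common values $\tau_k$ on each block (with $\chi_{J_k}$ the characteristic vector of $J_k$), we get $w = \prod_{k=1}^m w(\tau_k \chi_{J_k})$, where each factor $w(\tau_k \chi_{J_k})$ has $w_1(\tau_k)$ in the coordinates indexed by $J_k$ and $e$ elsewhere, hence belongs to $H_{J_k}$. Therefore $w \in H_{J_1}\cdots H_{J_m} = H_{\cJ}$, as required. There is essentially no obstacle here; the only non-trivial point is remembering that $\cJ \in \cC_0$ gives a genuine partition of $\{1,\dots,n\}$, so no coordinate of the element $w^{-1}d(\alpha)w$ is constrained to equal $e$, only the intra-block equalities are needed.
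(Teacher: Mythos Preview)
Your proof is correct and follows essentially the same route as the paper's: both hinge on the conjugation identity $d(a)w(\vt)d(a)^{-1}=w(a^2\vt)$ (equivalently your formula $w^{-1}d(\alpha)w=d(\alpha)\,w((1-\alpha^{-2})\vt)$), from which the block-constancy of $\vt$ and hence $w\in H_{\cJ}$ follow immediately. The paper's one-line sketch also cites the fact $\nor(H_{\cJ})=Z(G)H_{\cJ}$, which you bypass by comparing coordinates directly; this is a cosmetic difference, not a different method.
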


\begin{proof}
  It easily follows from the facts that $\nor(H_{\cJ})=Z(G)H_{\cJ}$, and that
  $d(a)w(\vt)d(a)\inv=w(a^2\vt)$ for any $t\in\qp^n$ and
  $a\in\qp^\ast$.
\end{proof}

Define $\frakF$ to be the collection of closed subgroups $F$ of $G$
with the following properties: $F/F\cap\Gamma$ is compact, and 
$F=gH_{\cJ}g\inv$ for some $g\in G$ and $\cJ\in\cC$.

\begin{lema} \label{lema:countable}
  $\frakF$ is countable.
\end{lema}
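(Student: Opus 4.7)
The strategy is to injectively map each $F \in \frakF$ to the lattice $F \cap \Gamma$ inside the countable group $\Gamma$, and combine this with the finiteness of $\cC$.

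Since $\cC$ is finite (its elements are families of pairwise disjoint nonempty subsets of $\{1,\ldots,n\}$), it suffices to show that for each fixed $\cJ \in \cC$ the subcollection
\[
\frakF_{\cJ} := \{F \in \frakF : F = g H_{\cJ} g\inv \text{ for some } g \in G\}
\]
is countable. For each $F \in \frakF_{\cJ}$, put $\Lambda_F := F \cap \Gamma$; by hypothesis $\Lambda_F$ is a cocompact discrete subgroup of $F \cong H_{\cJ} \cong \SL_2(\qp)^{\num{\cJ}}$. Since the latter group is compactly generated, $\Lambda_F$ is finitely generated (a standard fact: cocompact discrete subgroups of compactly generated locally compact groups are finitely generated). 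Moreover, viewing $G = \SL_2(\qp)^n$ as the $\qp$-points of the $\qp$-algebraic group $\SL_2^n$ and $F$ as the $\qp$-points of a semisimple $\qp$-algebraic subgroup with no compact $\qp$-factors, Borel density guarantees that $\Lambda_F$ is Zariski dense in $F$. Consequently the $\qp$-algebraic subgroup $\Zcl(\Lambda_F)$ coincides with the $\qp$-algebraic subgroup whose $\qp$-points give $F$, so $F = \Zcl(\Lambda_F)(\qp)$ is determined by $\Lambda_F$. This proves that $F \mapsto \Lambda_F$ is injective on $\frakF_{\cJ}$.

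Finally, $\Gamma$ is countable (being a discrete subgroup of the second countable group $G$), so $\Gamma$ contains only countably many finitely generated subgroups: each such subgroup is specified by some finite tuple of elements of $\Gamma$, and the set of all finite tuples from a countable set is countable. Thus $\frakF_{\cJ}$ is countable, and summing over the finitely many $\cJ \in \cC$ yields that $\frakF$ is countable.

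The main delicate point is the appeal to Borel density in the $p$-adic setting; this, however, is a well-established result for lattices in semisimple algebraic groups over local fields of characteristic zero, and for the concrete groups $F \cong \SL_2(\qp)^m$ at hand it follows from the classical arguments: no proper $\qp$-algebraic subgroup of $\SL_2^m$ can contain a cocompact lattice of the ambient group, as such a proper subgroup would project to a proper $\qp$-subgroup in at least one factor, contradicting the non-amenability (and in particular non-solvability) inherited by $\Lambda_F$ from the semisimple group $F$ via its cocompactness.
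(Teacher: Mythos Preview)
Your proof is correct and follows essentially the same route as the paper: both arguments use Borel density to conclude $F=\Zcl(F\cap\Gamma)$ and then reduce countability of $\frakF$ to countability of finite data drawn from the countable group $\Gamma$. The only variation is in how the finite witness is produced: the paper invokes Noetherianity of the Zariski topology to find a finite set $S\subset F\cap\Gamma$ with $\Zcl(\langle S\rangle)=F$, whereas you use the fact that a cocompact discrete subgroup of a compactly generated group is finitely generated. Either device works.

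One caution about your closing paragraph: the sketched justification of Borel density is not quite right, since a proper $\qp$-algebraic subgroup of $\SL_2^m$ need not project to a proper subgroup in any factor (the diagonal $\SL_2$ in $\SL_2\times\SL_2$ surjects onto both). This does not affect the validity of your proof, since you correctly cite Borel density as a known result; and in fact in the present setting the paper observes (via Remark~\ref{rem:prod}) that $F\cap\Gamma$ is itself a product of cocompact lattices in copies of $\SL_2(\qp)$, so the Zariski density reduces to the single-factor case, which is elementary.
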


\begin{proof}
  Let $F\in\frakF$. In view of Remark~\ref{rem:prod}, 
  \begin{equation*}
  F/F\cap\Gamma\cong \prod_{i=1}^r \SL_2(\qp)/\Lambda_i,
  \end{equation*}
  where $\Lambda_i$ is a cocompact discrete subgroup of $\SL_2(\qp)$ and
  $1\leq r\leq n$. It is straightforward to verify that each $\Lambda_i$
  is Zariski dense in $\SL_2(\qp)$ (this is a very special easy case of
  the Borel's density theorem (see~\cite{Dani:simple,Dani:proj-lin} or
  \cite{Rag:book}. Therefore $\Zcl(F\cap\Gamma)=F$, where $\Zcl(X)$
  denotes the Zariski closure of a set $X$ in $\M_2(\qp)^n$. Now there
  exists a finite set $S\subset F\cap\Gamma$ such that if $\langle
  S\rangle$ denotes the subgroup generated by $S$ then
  \begin{equation*}
  \Zcl(\langle S \rangle)=\Zcl(F\cap\Gamma)=F.
  \end{equation*}
  
  Thus
  \begin{equation*}
  \frakF\subset \{\Zcl(\langle S \rangle):\mbox{$S$ is a finite subset of
    $\Gamma$}\}. 
  \end{equation*}
  Since $\Gamma$ is countable, $\frakF$ is countable.
\end{proof}

\begin{proof}[Proof of Corollary~\ref{cor:H}]

  For any $h\in H$, by Theorem~\ref{thm:closure}, there exist $w\in W$
  and $\cJ\in\cC_0$ such that
  \begin{equation*}
  \cl{hUh\inv}x=h\cl{U(h\inv x)}=h(wH_{\cJ} w\inv) (h\inv x)=F_hx,
  \end{equation*}
  where $F_h:=hwH_{\cJ} w\inv h\inv$.
  
  Suppose if $H\subset F_h$ then $H\subset wH_{\cJ}w\inv$, and by
  Lemma~\ref{lema:wHw}, we have $w\in H_{\cJ}$ and $F_h=H_{\cJ}$.
  Hence $H_{\cJ}x$ is compact, and
  \begin{equation*}
  H_{\cJ} x\supset \cl{Hx}\supset (hUh\inv)x=H_{\cJ} x.
  \end{equation*}
  Thus $\cl{Hx}=H_{\cJ} x$, and we are through.
 
  Suppose that $H\not\subset F_h$, then $hUh\inv \subset F_h\cap H$,
  which is a proper algebraic subgroup of $H\cong \SL_2(K)$. Therefore
  $F_h\cap H$ at most $2$ dimensional, and any nontrivial algebraic
  unipotent subgroup of $F_h\cap H$ equals $hUh\inv$. Hence for any
  $h_1\in H$, if $F_{h_1}=F_h$ then $h_1 Uh_1\inv=hUh\inv$. Thus, 
  \begin{equation} \label{eq:h1:hNU} \text{for any $h,h_1\in H$: if
      $H\not\subset F_h$ and $F_h=F_{h_1}$, then $h_1\in hN_H(U)$}.
\end{equation}

Now fix $g\in G$ such that $x=g[\Gamma]\in G/\Gamma$. Since $F_hx$ is
compact, we have $gF_hx=gF_hg\inv\Gamma/\Gamma$ is compact.  Therefore
$gF_hg\inv\in \frakF$. Since $\frakF$ is countable, the collection
$\{F_h:h\in H\}$ is countable. Hence due to \eqref{eq:h1:hNU}, since
$H/N_H(U)$ is uncountable, there exists $h\in H$ such that $F_h\supset
H$, and we are back to the case considered earlier.
\end{proof}

\subsection*{Proof of Proposition~\ref{prop:H:comm}}
Since $H_J=G_J\cap H_{\cJ}$, $Y:=G_Jx_0\cap H_{\cJ} x_0$ is compact,
and the stabilizer of $x_0$, which is $\Gamma$, is discrete, we
conclude that every orbit of $H_J$ in $Y$ is open. Therefore every
orbit of $H_J$ in $Y$ is closed. In particular, $H_Jx_0$ is compact.

Therefore replacing $G$ by $G_J$, $H_{\cJ}$ by $H_J$, and $\Gamma$ by
$G_J\cap\Gamma$, without loss of generality we may assume that $Hx_0$
is compact.
  
  In view of Remark~\ref{rem:prod}, we define the natural projection
  maps $q_j:G\to G_{\{j\}}$ and $\bar q_j:G/\Gamma\to
  G_{\{j\}}/\Gamma_j$. Now ${\bar q_j}\inv(e\Gamma_j)\cap Hx_0$ is a
  compact subset of $G/\Gamma$. Since it is countable, it is
  finite. Therefore 
  \begin{equation*}
  {\bar q_j}\inv(e\Gamma_j)\cap Hx_0 \cong q_j\inv(\Gamma_j)\cap
  H/q_j\inv(\Gamma_j)\cap H\cap \Gamma
  \end{equation*}
  is finite. Now 
  \begin{equation*}
  q_j\inv(\Gamma_j)\cap H=\{(\gamma,\ldots,\gamma)\in G:\gamma\in
  \Gamma_j\}
  \end{equation*} and 
  \begin{equation*}
  q_j\inv(\Gamma_j)\cap H\cap\Gamma=\{(\gamma,\ldots,\gamma)\in G:
  \gamma\in \cap_{i=1}^n \Gamma_i\}.
  \end{equation*}
  Therefore $\cap_{i=1}^n\Gamma_i$ is a subgroup of finite index in
  $\Gamma_j$. Therefore $\Gamma_i$ and $\Gamma_j$ are commensurable for
  all $i$ and $j$. 
\qed

\subsection*{Proof of Corollary~\ref{cor:H:closure} }
Let $J\in\cJ$, and $\Lambda_J=\cap_{j\in J}p_j(\Gamma_j)$.  Then by
definition $\Lambda_J$ is a subgroup of finite index in
$p_j(\Gamma_j)$ for each $j\in J$, and hence $\Lambda_J$ is a
cocompact lattice in $\SL_2(K)$. Clearly, $H_J/(H_J\cap\Gamma)\cong
SL_2(K)/\Lambda_J$ is compact. Therefore $H_Jx_0$ is compact.

  From this we obtain that $H_{\cJ}x_0$ is compact. Now for any
  $J_1,J_2\in\cJ$ with $J_1\neq J_2$, we have that the lattices
  $\Lambda_{J_1}$ and $\Lambda_{J_2}$ are noncommensurable. Therefore
  applying Corollary~\ref{cor:H:noncomm} to $H_{\cJ}$ in place of $G$,
  we conclude that $Hx_0$ is dense in $H_{\cJ}x_0$.
\qed

\section{Limiting distributions of sequences of unipotent orbits}

As noted in the introduction, we start the second half of the article.
First we give the statement of the main result, which says that a
unipotent trajectory starting from a non-singular point attaches zero
measure on its singular set $\cS(U,\Gamma)$ in the limiting
distribution. 

\subsubsection*{Notation} 
Let $\cM=\cM(G/\Gamma)$ denote the space of probability measures on
$G/\Gamma$, which is compact. Then $\cM$ is compact with respect to
the topology of weak-$\ast$ convergence; here by definition, a
sequence $\mu_i\to\mu$ in $\cM$ if $\int f\,d\mu_i\to\int f\,d\mu$ as
$i\to\infty$, for all $f\in C(G/\Gamma)$.

Let $\theta$ denote a Haar measure on $\qp$. 

\begin{theo} \label{thm:avoid}
  Let $x_i\to x$ be a sequence in $G/\Gamma$ and $t_i\to\infty$ be a
  sequence in $\qp$. Fix any measurable set $\frakO\subset \qp$ with
  $0<\theta(\frakO)<\infty$. Let $\mu_i=\mu^\frakO_i\in\cM(G/\Gamma)$
  be defined as
  \begin{equation} \label{eq:mu_i}
    \mu^\frakO_i(E)=
    \frac{\theta(\{t\in t_i\frakO:u(t)x_i\in E\})}{\theta(t_i\frakO)}, 
    \quad \text{for all Borel sets $E\subset G/\Gamma$}.
  \end{equation}
  Let $\mu\in\cM$ be a limit of any subsequence of
  $\{\mu_i\}_{i=1}^\infty$ in $\cM$.  Further suppose that $x\not\in
  \cS(U,\Gamma)$. Then $\mu(\cS(U,\Gamma))=0$.
\end{theo}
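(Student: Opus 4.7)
The plan is to adapt the Dani--Margulis linearization technique to the $p$-adic setting, in the spirit of \cite{Mar+Tom:Invent}.

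First I would reduce to a single compact orbit. By Lemma~\ref{lema:countable} the family $\frakF$ is countable, so the collection of compact orbits $Y=(wH_\cJ w\inv)y_0$ with $w\in U^\perp$, $\cJ\in\cC_0$ and $H_\cJ\neq G$ is countable. Since $\cS(U,\Gamma)$ is by definition the union of such orbits, countable subadditivity of $\mu$ reduces the task to showing $\mu(Y)=0$ for a single compact homogeneous orbit $Y=Fy_0$ of a proper subgroup $F=wH_\cJ w\inv$.

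Next I would linearize. For each such $F$, choose a finite-dimensional $\qp$-rational representation $\rho:G\to\GL(V)$ and a vector $p_F\in V$ (obtained from $\wedge^{\dim F}\Lie(F)\subset\wedge^{\dim F}\Lie(G)$, together with an additional construction to ensure the stabilizer is exactly $F$ rather than $\nor(F)$) so that the orbit $\rho(G)p_F$ is locally closed in $V$. Writing $y_0=g_0\Gamma$, compactness of $Y$ gives that $g_0\inv\Gamma\cdot p_F$ is discrete in $V$. A shrinking system of neighbourhoods $\Omega_\epsilon$ of $Y$ in $G/\Gamma$ then corresponds to a shrinking system of $\Gamma$-invariant open sets $\Phi_\epsilon$ around $\Gamma p_F$ in $V$, characterized by: $x=g\Gamma\in\Omega_\epsilon$ iff $\rho(g)\inv\gamma p_F\in\Phi_\epsilon$ for some $\gamma\in\Gamma$.

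For fixed $g$ and $\gamma$, the map $t\mapsto\rho(u(t)g)\gamma p_F$ is a polynomial $\qp\to V$ of degree bounded purely in terms of $V$. The key analytic input is the $(C,\alpha)$-good property of such polynomial maps on $\qp$ (the $p$-adic Dani--Margulis estimate): for any ball $B\subset\qp$,
\begin{equation*}
\theta\bigl(\{t\in B:\|\rho(u(t)g)\gamma p_F\|<\epsilon\sup_{s\in B}\|\rho(u(s)g)\gamma p_F\|\}\bigr)\leq C\epsilon^\alpha\theta(B).
\end{equation*}
Applying this on $B=t_i\frakO$ produces a dichotomy: either the entire polynomial trajectory on $B$ stays inside $\Phi_\epsilon$ (\emph{trapped} alternative), or the relative measure of the set of $t$ for which it lies in $\Phi_\epsilon$ is $O(\epsilon^\alpha)$.

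The main obstacle, and the deep point of the Dani--Margulis strategy, is to rule out the trapped alternative using the hypothesis $x\notin\cS(U,\Gamma)$. Discreteness of $g_0\inv\Gamma p_F$ ensures that on any fixed bounded region in $V$ only finitely many $\gamma$ can contribute, and by a compactness argument one may pass to a subsequence along $i\to\infty$ in which a fixed $\gamma_\infty$ traps every trajectory. The polynomial nature of the trajectory together with Zariski closedness of $F$ then forces the limit trajectory $u(\qp)x$ to be contained in a compact $F$-orbit, placing $x\in\cS(U,\Gamma)$, contrary to assumption. Combining this with the $O(\epsilon^\alpha)$ bound on the untrapped part, summed over the finitely many relevant $\gamma$, and letting $\epsilon\to 0$ yields $\mu(Y)=0$ and hence $\mu(\cS(U,\Gamma))=0$.
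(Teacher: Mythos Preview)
Your countability reduction has a genuine gap. You claim the collection of compact orbits $Y=(wH_\cJ w\inv)y_0$ is countable, but Lemma~\ref{lema:countable} only gives countably many subgroups $F$ with $F/(F\cap\Gamma)$ compact; it does not bound the number of compact orbits of conjugates $wH_\cJ w\inv$ through varying basepoints. Concretely, for $n=2$: if $Hy$ is one fixed compact $H$-orbit, then for every $w\in W_1=U^\perp$ the translate $wHy=(wHw\inv)(wy)$ is a compact orbit of $wHw\inv$, and by Proposition~\ref{prop:disjoint} these are pairwise disjoint as $w$ ranges over the uncountable group $W_1$. Hence $\cS(U,\Gamma)$ is an \emph{uncountable} union of such compact orbits, and proving $\mu(Y)=0$ for each individual $Y$ (which is essentially automatic) does not yield $\mu(\cS(U,\Gamma))=0$.

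What is genuinely countable is the family of \emph{tubes}: by Lemmas~\ref{lema:S} and~\ref{lema:X(F)} one has $\cS(U,\Gamma)=\bigcup_{F\in\cH}\pi(X(F))$ with $\cH$ countable, and each piece has the form $WG_{J^c}H_Jy$ with $\abs{J}=2$. The paper projects via $\bar\rho_J$ to reduce everything to $n=2$, where the task becomes $\bar\mu(CH_J\bar y)=0$ for every compact $C\subset W_J$ (Theorem~\ref{thm:avoid:2}). The linearization is then carried out concretely on $E=\M_2(\qp)$ with $H$ the stabilizer of $I$, so that the whole tube $W_1Hy$ corresponds to the line $\cW=W_1\cdot I\subset E$; the polynomial dichotomy (Proposition~\ref{prop:avoid}) together with the injectivity of representatives near $\cW$ (Lemma~\ref{lema:inject}, which rests on the non-self-intersection Proposition~\ref{prop:disjoint}) controls the time the trajectory spends near a compact piece of the tube, and the hypothesis $x\notin WHy$ rules out the trapped alternative at the level of the tube. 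Your wedge-power linearization could in principle be adapted, but the stabilizer of $p_F$ is $\nor(F)$, the relevant set is $\{g:g\inv Ug\subset F\}$, and both the injectivity step and your ``trapped'' argument must be formulated for the tube, not for a single orbit.
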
 

As a first consequence of this result, we deduce the result required
in the proof of Theorem~\ref{thm:closure}.

\subsection{Proof of Theorem~\ref{thm:avoid2}.}
\label{subsec:thm:avoid2} Given a compact neighbourhood $\frakO$ of
$0$ in $K$, we apply Theorem~\ref{thm:avoid} for $1+\frakO$ in place
of $\frakO$ in the statement above. Since
$\mu(\cS(U,\Gamma)=\emptyset$, we can choose $y\in \supp(\mu)\setminus
\cS(U,\Gamma)$. Let $\Omega_i$ be a sequence of open neighbourhoods of
$y$ in $G/\Gamma$ such that $\cap_i\Omega_i=\{y\}$.  Now by the
definition of $\mu_i=\mu_i^{1+\frakO}$, by passing to a subsequence of
$i$, we may assume that
$\supp(\mu_i^{1+\frakO})\cap\Omega_i\neq\emptyset$. Then there exists
$t_i'\in (1+\frakO)t_i$ such that $u(t_i')x_i\in \Omega_i$. Therefore
$u(t_i')x_i\to y$ as $i\to\infty$. \qed

\subsection{Uniform distribution of $U$-orbits}

As another main consequence of Theorem~\ref{thm:avoid} we will deduce
the uniform distribution of $U$-orbits using Ratner's measure
classification result. We first give an idea of the connection of both
the results.

\begin{lema} \label{lema:inv} Any limit measure $\mu$ as obtained in
  Theorem~\ref{thm:avoid} is $U$-invariant.
\end{lema}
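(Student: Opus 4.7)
My plan is to check $U$-invariance of $\mu$ directly on test functions: for every $f\in C_c(G/\Gamma)$ and every $s\in\qp$, I will verify that $\int f\circ u(s)\,d\mu = \int f\,d\mu$. Starting from the definition \eqref{eq:mu_i} and viewing $\mu_i$ as the push-forward of normalized Haar measure on $t_i\frakO$ under $t\mapsto u(t)x_i$, I would compute
\begin{equation*}
\int f(u(s)y)\,d\mu_i(y)
= \frac{1}{\theta(t_i\frakO)}\int_{t_i\frakO} f(u(s+t)x_i)\,dt
= \frac{1}{\theta(t_i\frakO)}\int_{s+t_i\frakO} f(u(t')x_i)\,dt',
\end{equation*}
using translation-invariance of $\theta$ under the substitution $t' = s+t$. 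Comparing with $\int f\,d\mu_i$ gives the pointwise estimate
\begin{equation*}
\Bigl|\int f\circ u(s)\,d\mu_i - \int f\,d\mu_i\Bigr|
\le \|f\|_\infty\cdot \frac{\theta\bigl((s+t_i\frakO)\triangle t_i\frakO\bigr)}{\theta(t_i\frakO)}.
\end{equation*}

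The key quantitative step is to show this ratio tends to $0$ as $i\to\infty$. After the dilation change of variable $t = t_i u$, which scales $\theta$ by $\abs{t_i}$, the ratio rewrites as $\theta\bigl((st_i\inv+\frakO)\triangle \frakO\bigr)/\theta(\frakO)$. Since $t_i\to\infty$ in $\qp$ forces $\abs{st_i\inv}\to 0$, the standard $L^1$-continuity of translation on a locally compact abelian group, applied to the indicator $\chi_\frakO\in L^1(\qp)$, makes this quantity tend to $0$.

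To conclude, I would pass to the limit along the subsequence realizing $\mu_i\to\mu$. Since $f$ and $f\circ u(s)$ both lie in $C_c(G/\Gamma)$, weak-$\ast$ convergence yields
\begin{equation*}
\int f\circ u(s)\,d\mu = \lim_{i\to\infty}\int f\circ u(s)\,d\mu_i = \lim_{i\to\infty}\int f\,d\mu_i = \int f\,d\mu.
\end{equation*}
Ranging over $s\in\qp$ and $f\in C_c(G/\Gamma)$ gives $U$-invariance of $\mu$.

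The only real obstacle is the small-symmetric-difference estimate in the second step, but this reduces to translation-continuity in $L^1(\qp)$ for a set of finite measure, which is a classical fact valid uniformly in both the archimedean and non-archimedean cases as soon as $st_i\inv\to 0$ in the local-field topology.
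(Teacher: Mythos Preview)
Your proof is correct and follows essentially the same route as the paper's: both reduce to bounding $\theta\bigl((s+t_i\frakO)\triangle t_i\frakO\bigr)/\theta(t_i\frakO)$, rescale by $t_i$ to rewrite this as $\theta\bigl((st_i^{-1}+\frakO)\triangle\frakO\bigr)/\theta(\frakO)$, and then invoke the $L^1$-continuity of translation (the paper phrases this as regularity of $\theta$, and first reduces to compact $\frakO$). Your version, working with $f\in C(G/\Gamma)$ rather than Borel sets, is if anything slightly cleaner in the passage to the weak-$\ast$ limit; one small remark is that the dilation $t\mapsto t_i u$ scales $\theta$ by a power of $\abs{t_i}$ rather than $\abs{t_i}$ itself, but since the same factor appears in numerator and denominator this is immaterial.
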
 

Since invariant measures decompose into its ergodic components, using
the description of ergodic $U$-invariant measures
\cite{R:p-adic,Mar+Tom:Invent} and Theorem~\ref{thm:avoid}, we will
obtain the following uniform distribution result.

\begin{theo} \label{thm:uniform}
  Let $\frakO$ be a measurable subset of $\qp$ such that
  $0<\theta(\frakO)<\infty$. Fix any $x\in G/\Gamma$ then there exists
  $w\in W$ and $\cJ\in\cC$ such that $\cl{Ux}=wH_{\cJ}w\inv x$ and the
  following holds: For $T\in\qp\setminus\{0\}$ define $\mu_T\in\cM$ as
  \begin{equation}
    \label{eq:mu_Ti}
    \mu_T(E)=\frac{\theta(\{t\in T\frakO: u(t)x\in E\})}{\theta(T\frakO)},
    \quad
    \text{for all Borel sets $E\subset G/\Gamma$}.
  \end{equation}
  Then for any continuous function $f$ on $G/\Gamma$, we have
  \begin{equation*}
    \int f\, d\mu_T \to \int f\,d\mu \quad \text{as $T\to\infty$ in $\qp$,} 
  \end{equation*}
  where $\mu$ denotes the unique $wH_{\cJ}w\inv$-invariant probability
  measure on the space
  \begin{equation*}
    wH_{\cJ}w\inv x\cong wH_{\cJ}w\inv/(wH_{\cJ}w\inv\cap G_x),
  \end{equation*}
  where $G_x$ denotes the stabilizer of $x$ in $G$. 
\end{theo}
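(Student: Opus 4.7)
The plan is to combine Theorem~\ref{thm:closure}, Lemma~\ref{lema:inv}, Ratner's classification of ergodic $U$-invariant measures, and Theorem~\ref{thm:avoid}: the classification supplies the possible ergodic components, and Theorem~\ref{thm:avoid} rules out all but the largest one. First I would use Theorem~\ref{thm:closure} to pick $w\in W$ and $\cJ\in\cC_0$ with $\cl{Ux}=H'x$, where $H':=wH_{\cJ}w\inv$, and set $\Gamma':=H'\cap G_x$, so that $H'x\cong H'/\Gamma'$ is compact. Each $\mu_T$ is supported on $Ux\subseteq H'x$, hence so is any weak-$\ast$ limit. Since $\cM(G/\Gamma)$ is compact, it suffices to show that the unique $H'$-invariant probability measure $\mu^\ast$ on $H'/\Gamma'$ is the only subsequential limit of $\{\mu_T\}$ as $T\to\infty$, so I fix such a limit $\mu=\lim_i\mu_{T_i}$ and aim to show $\mu=\mu^\ast$.

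By Lemma~\ref{lema:inv}, $\mu$ is $U$-invariant: for fixed $s\in\qp$, the total variation of $(u(s))_\ast\mu_T-\mu_T$ is controlled by $\theta(T\frakO\triangle(s+T\frakO))/\theta(T\frakO)$, which tends to zero as $T\to\infty$. Working inside $H'/\Gamma'$, the point $x$ is non-singular for the $U$-action because $\cl{Ux}=H'/\Gamma'$; Theorem~\ref{thm:avoid} applied to $H'/\Gamma'$ with the constant sequence $\{x_i=x\}$ then gives $\mu(\cS(U,\Gamma'))=0$. Decomposing $\mu$ into ergodic $U$-invariant components and invoking the Ratner--Margulis--Tomanov measure classification~\cite{R:p-adic,Mar+Tom:Invent}, each ergodic component $\mu_\alpha$ is the Haar measure on a closed orbit $L_\alpha y_\alpha\subseteq H'/\Gamma'$ for some closed subgroup $L_\alpha\supseteq U$ of $H'$.

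The heart of the argument is then the claim that if $L_\alpha\subsetneq H'$, then $y_\alpha\in\cS(U,\Gamma')$. To verify it I would apply Theorem~\ref{thm:closure} inside $H'/\Gamma'$: $\cl{Uy_\alpha}=vH_{\cJ'}v\inv y_\alpha$ for some $v\in W\cap H'$ and some $\cJ'\in\cC_0$ relative to $H'$. Since $\cl{Uy_\alpha}\subseteq L_\alpha y_\alpha\subsetneq H'/\Gamma'$, we have $vH_{\cJ'}v\inv\neq H'$, and compactness of the orbit is automatic. Writing $v=v_1v_2$ along the direct sum $W\cap H'=(U^\perp\cap H')\oplus U$ and using $U\subseteq H_{\cJ'}$, one gets $vH_{\cJ'}v\inv=v_1H_{\cJ'}v_1\inv$ with $v_1\in U^\perp$, placing $y_\alpha$ in $\cS(U,\Gamma')$ by definition. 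Granting the claim, $\mu$-almost every ergodic component has $L_\alpha=H'$, so $\mu_\alpha=\mu^\ast$ and hence $\mu=\mu^\ast$.

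The main obstacle is the ergodic-decomposition step: Ratner's (and Margulis--Tomanov's) measure classification is the nontrivial external input, and the bridge to Theorem~\ref{thm:avoid} requires identifying every proper $U$-orbit closure in $H'/\Gamma'$ as a $U^\perp$-conjugate of some $H_{\cJ'}$, which the splitting $W=U^\perp\oplus U$ provides. Once these pieces are in place, everything else is bookkeeping.
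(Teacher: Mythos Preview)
Your proposal is correct and follows essentially the same strategy as the paper: reduce to the non-singular situation, use Lemma~\ref{lema:inv} for $U$-invariance of any weak-$\ast$ limit, apply Theorem~\ref{thm:avoid} to kill mass on the singular set, and then combine ergodic decomposition with Ratner's classification to force the limit to be the Haar measure. The only differences are organizational: the paper frames the reduction as an explicit induction on $n$ (singular $x$ drops to fewer factors) rather than your one-shot passage to $H'x=\cl{Ux}$, and the paper identifies proper ergodic components with the singular set via the group-theoretic Proposition~\ref{prop:wHw}/Corollary~\ref{cor:singular}, whereas you re-invoke Theorem~\ref{thm:closure} inside $H'/\Gamma'$ for the same purpose---both routes are valid and yield the same conclusion.
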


\section{A countability theorem and the singular set}

Note that for any $g\in G$, and $x=gx_0$, the orbit
$G_jx=gG_{\{j\}}x_0$ is compact for any $j=1,\dots,n$, where $x_0\in
G/\Gamma$ denotes the coset of the identity. Similarly, $G_J x$ is
compact for any nonempty $J\subset\{1,\dots,n\}$.
  
Let $\Gamma_J=G_J\cap\Gamma$, and $\bar\rho_J:G/\Gamma \to
G_J/\Gamma_J$ denotes the natural projection in view of
\eqref{eq:Gamma-prod}. Note that every fiber of $\bar\rho_J$ is a
compact orbit of the group $G_{J^c}$, where
$J^c=\{1,\ldots,n\}\setminus J$. Therefore $\bar\rho_J$ is a proper
map; namely, the inverse images of compact sets are compact. 
 
We assume that $n\geq 2$. Let $\cH$ denote the collection of all
subgroups $F$ of $G$ with the following properties: (i) $F/F\cap
\Gamma$ is compact, and (ii) $F=f\inv G_{J^c}H_Jf$ for some $f\in
G_J$, where $J\subset \{1,\ldots,n\}$, $\abs{J}=2$.  Note that $Z(G)F$
is a proper maximal subgroup of $G$, where $Z(G)=\{(\pm I,\dots,\pm
I)\}$ denotes the center of $G$, and $I=\smat{1& \\ &1}$.

Note that $F\cap\Gamma=\Gamma_{J^c}(fH_{J}f\inv \cap \Gamma_J)$. Let
$\Lambda=fH_Jf\inv\cap\Gamma_J$. Then $fH_Jf\inv/\Lambda$ is compact
and admits an $fH_Jf\inv$-invariant probability measure. This measure
projects onto an $fH_Jf\inv$-probability measure on $fH_Jf\inv/L$,
where $L$ denotes the Zariski closure of $\Lambda$ in $fH_Jf\inv$.
Since $H_J\cong \SL_2(\qp)$, if $L$ is one dimensional then the
quotient cannot be compact, and if $L$ is two dimensional then the
quotient is a projective line and does not admit an invariant measure.
Therefore $L=fH_Jf\inv$; we remark that this conclusion is also a
special case of Borel's density theorem~\cite{Rag:book,Dani:simple}.
Therefore $fH_{J}f\inv$ is the Zariski closure of the subgroup
generated by a finite subset of $\Gamma_J$. Hence $F\in\cH$ is
determined by $J$ and a finite subset of $\Gamma$. Since $\Gamma$ is
countable, we conclude the following:

\begin{lema} \label{lema:cH}
  The collection $\cH$ is countable.
\end{lema}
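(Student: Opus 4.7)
The plan is to extract each $F\in\cH$ from a piece of countable combinatorial data, namely the two-element subset $J\subset\{1,\ldots,n\}$ together with a finite subset of $\Gamma$. Countability of $\cH$ will then drop out because $\Gamma$ is countable and there are only finitely many such $J$.

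To execute this, I would first reduce classification of $F$ to classification of a conjugate of $H_J$ sitting inside $G_J$. Since $f\in G_J$ commutes elementwise with $G_{J^c}$, any $F=f\inv G_{J^c}H_Jf\in\cH$ factors as $F=G_{J^c}\cdot L$ with $L:=f\inv H_J f\subset G_J$; so for each fixed $J$, specifying $F$ amounts to specifying $L$.

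Next I would invoke a Borel density style argument to show that $L$ is determined by a finite subset of $\Gamma$. Compactness of $F/F\cap\Gamma$, together with compactness of $G_{J^c}/\Gamma_{J^c}$, forces $L/\Lambda$ to be compact, where $\Lambda:=L\cap\Gamma_J\subset\Gamma$. The $L$-invariant probability measure on $L/\Lambda$ then pushes forward to an $L$-invariant probability measure on $L/\Zcl(\Lambda)$, with the Zariski closure computed inside $L\cong\SL_2(\qp)$. Running through the closed algebraic subgroups of $\SL_2(\qp)$: if $\Zcl(\Lambda)$ is one-dimensional the quotient $L/\Zcl(\Lambda)$ is non-compact, and if $\Zcl(\Lambda)$ is two-dimensional (a Borel subgroup) the quotient is a projective line, which admits no $L$-invariant probability measure. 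Hence $\Zcl(\Lambda)=L$, so $L=\Zcl(\langle S\rangle)$ for some finite $S\subset\Lambda\subset\Gamma$.

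Countability now drops out: $\Gamma$ is countable, so is the collection of its finite subsets, and therefore for each $J$ only countably many $L$'s can arise; taking the union over the finitely many two-element $J\subset\{1,\ldots,n\}$ gives countability of $\cH$. The only substantive step is the Borel density argument ruling out the one- and two-dimensional possibilities for $\Zcl(\Lambda)$; once that is in hand everything else is bookkeeping, and indeed one can imitate verbatim the proof of Lemma~\ref{lema:countable}, where the same device produces countability of $\frakF$.
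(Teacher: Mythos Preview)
Your proposal is correct and follows essentially the same argument as the paper: the paper likewise writes $F=G_{J^c}\cdot(fH_Jf^{-1})$ (up to a notational slip in the direction of conjugation), shows $fH_Jf^{-1}/\Lambda$ is compact with $\Lambda=fH_Jf^{-1}\cap\Gamma_J$, runs the identical Borel density argument ruling out the one- and two-dimensional Zariski closures, and concludes that $F$ is determined by $J$ together with a finite subset of $\Gamma$. Your explicit reference to the proof of Lemma~\ref{lema:countable} as the template is apt---the two arguments are indeed parallel.
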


For any $F\in \cH$, we define (the algebraic variety)
\begin{equation*}
  X(F)=\{g\in G: U\subset gFg\inv\}.
\end{equation*}
Note that for any $F\in\cH$ and $g\in G$: 
\begin{equation} \label{eq:XF}
  g\in X(F) \Leftrightarrow 
  \cl{Ugx_0}\subset gFx_0=(gFg\inv )gx_0,
\end{equation}
where $x_0=\pi(e)$ and $\pi:G\to G/\Gamma$ is the natural quotient
map. 

\begin{lema} \label{lema:S}
  $\cS(G/\Gamma)=\bigcup_{F\in\cH} \pi(X(F))$.
\end{lema}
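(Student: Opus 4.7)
The plan is to establish both inclusions directly, relying in both directions on the centralizer computation $C_{\SL_2(\qp)}(U)=\pm U$; throughout, let $p_J:G\to G_J$ denote the natural projection. For $(\supseteq)$, I will start with $g\in X(F)$ for some $F=f\inv G_{J^c}H_J f\in\cH$, $f\in G_J$, $J=\{i,j\}$. Since $f$ commutes with $G_{J^c}$, I can factor $gFg\inv=G_{J^c}\cdot vH_Jv\inv$ with $v:=gf\inv$, and applying $p_J$ to $U\subset gFg\inv$ (with $p_J(G_{J^c})=\{e\}$) reduces the condition to $p_J(v)\inv p_J(U)p_J(v)\subset H_J$, which by the centralizer identity forces $v_jv_i\inv=\epsilon u(t_0)\in\pm U$. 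I will then choose $w\in U^\perp$ supported on positions $\{i,j\}$ with $w_jw_i\inv=u(t_0)$ (handling the subcase $j=n$ separately because $w_n=e$); the sign $\epsilon$ is absorbed by centrality of $-I$, so $wH_Jw\inv=vH_Jv\inv$ as subgroups of $G$. Hence $gFg\inv=wH_{\cJ^*}w\inv$ with $\cJ^*=\{J\}\cup\{\{k\}:k\in J^c\}\in\cC_0$ and $H_{\cJ^*}\neq G$, and compactness of $F/F\cap\Gamma$ yields compactness of $(wH_{\cJ^*}w\inv)\pi(g)=gFx_0$, placing $\pi(g)\in\cS(G/\Gamma)$.

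For $(\subseteq)$, starting from $x=\pi(g_0)\in\cS(G/\Gamma)$ witnessed by $(wH_\cJ w\inv)x$ compact with $w\in U^\perp$, $\cJ\in\cC_0$, $H_\cJ\neq G$, I will pick $J_0\in\cJ$ with $|J_0|\geq 2$ (which exists since $H_\cJ\neq G$) and $J=\{i,j\}\subset J_0$. Disjointness of the parts of $\cJ$ gives $p_J(H_\cJ)=p_J(H_{J_0})=H_J$, so the $p_J$-equivariance and properness of $\bar\rho_J:G/\Gamma\to G_J/\Gamma_J$ yields that $v_JH_Jv_J\inv\cdot\bar\rho_J(x)$ is compact in $G_J/\Gamma_J$, where $v_J:=p_J(w)$. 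Setting $f:=v_J\inv p_J(g_0)\in G_J$ rewrites this orbit as $v_J\cdot H_J\cdot f\Gamma_J$, so $H_J\cdot f\Gamma_J$ is compact; the product decomposition~\eqref{eq:Gamma-prod} then promotes this to compactness of $H_J\cdot fx_0$ and hence of $Fx_0$ for $F:=f\inv G_{J^c}H_Jf$, so $F\in\cH$. With $g=g_0$, a component-wise computation shows $v:=g_0 f\inv$ has $v_k=w_k$ for $k\in J$, so $v_jv_i\inv=w_jw_i\inv\in U\subset\pm U$ directly (trivial case split $j=n$ vs $j<n$, using $w\in U^\perp$); applying the centralizer criterion in reverse gives $U\subset gFg\inv$, so $g_0\in X(F)$ and $x\in\pi(X(F))$.

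The main technical obstacle will be reconciling the two parametrizations: in $\cS(G/\Gamma)$ one conjugates $H_\cJ$ by $w\in U^\perp\subset W$, whereas in $\cH$ one conjugates $G_{J^c}H_J$ by an arbitrary $f\in G_J$. The centralizer computation $C_{\SL_2(\qp)}(U)=\pm U$ provides the bridge: both parametrizations ultimately pin down the relevant two-coordinate conjugation data to an element of $\pm U$, with the sign absorbed by the center of $\SL_2(\qp)$ and the constraint $w_n=e$ reconciled through the projection $p_J$.
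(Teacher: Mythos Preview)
Your argument is correct and follows the same overall strategy as the paper. For the inclusion $(\subseteq)$, your choice $f=p_J(w)^{-1}p_J(g_0)$ is the inverse of the paper's $f$ (determined by $g^{-1}w\in G_{J^c}f$), so the two constructions yield the same $F\in\cH$; the paper then concludes $g^{-1}Ug\subset F$ directly from the chain $g^{-1}Ug\subset g^{-1}wH_{\cJ}w^{-1}g\subset G_{J^c}(g^{-1}w)H_{\cJ}(w^{-1}g)=F$, whereas you route through the centralizer identity, but the substance is identical. For $(\supseteq)$ the paper is terser than you: it simply invokes~\eqref{eq:XF} and does not spell out why $gFg^{-1}$ has the specific form $wH_{\cJ^*}w^{-1}$ with $w\in U^\perp$ demanded by the definition of $\cS(U,\Gamma)$. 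Your explicit construction of such a $w$ from $v_jv_i^{-1}\in\{\pm w_1(t):t\in\qp\}$ supplies this step, and is in effect the computation the paper carries out in the very next result, Lemma~\ref{lema:X(F)}, where $X(F)=WG_{J^c}H_JZ(G)f$ is established.
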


\begin{proof}
  By (\ref{eq:XF}), $\pi(X(F))\subset \cS(G/\Gamma)$. 
  
  Now let $g\in G$ such that $gx_0\in\cS(G/\Gamma)$. Then there exists
  $\cJ\in\cC$ and $w\in W$ such that $\cup\cJ=\{1,\dots,n\}$,
  $H_\cJ\neq G$, $U \subset wH_{\cJ}w\inv$ and $H_{\cJ}w\inv g x_0$ is
  compact.  
  
  Therefore there exists $1\leq j_1<j_2\leq n$ such that, if
  $g=(g_1,\ldots,g_n)$ and $g\in H_{\cJ}$, then $g_{j_1}=g_{j_2}$. Put
  $J=\{j_1,j_2\}$. Since $G=G_{J^c}G_J$, there exists $f\in G_J$ such
  that $g\inv w\in G_{J^c}f$.
  
  If we put $F=G_{J^c}fH_J f\inv$, then $F=G_{J^c}(g\inv
  w)H_{\cJ}(w\inv g)$. Since $G_{J^c}z$ is compact for all $z\in
  G/\Gamma$, $Fx_0$ is compact. Hence $\cl{g\inv U gx_0} \subset
  Fx_0$. Therefore $g\inv U g\subset F$, and hence $g\in X(F)$.
\end{proof}

\begin{lema} \label{lema:X(F)}
  Let $F\in\cH$, $J=\{j_1,j_2\}$, $1\leq j_1<j_2\leq n$, and $f\in G_J$
  such that $F=f\inv G_{J^c}H_J f$.  Then $X(F)=WG_{J^c}H_{J}Z(G)f$.
  Moreover $H_J(zfx_0)$ is compact for every $z\in Z(G)$. 
\end{lema}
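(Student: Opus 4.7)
The plan is to unravel $X(F)$ using the direct product structure $G=G_JG_{J^c}$ in which $G_{J^c}$ commutes elementwise with $G_J$. Since $f\in G_J$, I would rewrite $F=G_{J^c}\cdot f\inv H_Jf$, and for any $g\in G$ with decomposition $g=g_{J^c}g_J$ we have $gFg\inv=G_{J^c}\cdot hH_Jh\inv$, where $h:=g_Jf\inv\in G_J$. Each $u(t)\in U$ factors uniquely as $u(t)=(u(t))_{J^c}\cdot(u(t))_J$ with $(u(t))_{J^c}\in G_{J^c}$ automatic and $(u(t))_J\in U_J\subset H_J$; by uniqueness of the $G_J$/$G_{J^c}$ factorisation, the inclusion $U\subset gFg\inv$ collapses to the single condition $h\inv U_Jh\subset H_J$.

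The core computation then takes place inside $G_J\cong\SL_2(\qp)\times\SL_2(\qp)$ with $H_J$ the diagonal. Writing $h=(h_1,h_2)$, requiring $(h_1\inv u_1(t)h_1,\,h_2\inv u_1(t)h_2)$ to lie on the diagonal for every $t\in\qp$ is equivalent to $h_2h_1\inv$ centralising the one-parameter group $\{u_1(t)\}$ inside $\SL_2(\qp)$. A direct matrix computation identifies this centraliser as $\{\pm I_1\}\cdot\{u_1(s):s\in\qp\}$, so $h_2=\varepsilon u_1(s)h_1$ for some $\varepsilon\in\{\pm I_1\}$ and $s\in\qp$. The factorisation $h=(I_1,u_1(s))(h_1,h_1)(I_1,\varepsilon I_1)$ exhibits $h$ as an element of $W_JH_JZ(G_J)$, where $W_J:=W\cap G_J$, and so $g=g_{J^c}hf\in G_{J^c}W_JH_JZ(G_J)f$. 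The main bookkeeping step, and the most fiddly part of the argument, is then the set-theoretic identity $G_{J^c}W_JH_JZ(G_J)f=WG_{J^c}H_JZ(G)f$; this follows from $W=W_JW_{J^c}$ with $W_{J^c}\subset G_{J^c}$, $Z(G)=Z(G_J)Z(G_{J^c})$ with $Z(G_{J^c})\subset G_{J^c}$, and the commutation of $G_{J^c}$ with everything in $G_J$.

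For the moreover clause, the hypothesis $F\in\cH$ makes $Fx_0$ compact, and translation by $f$ makes $G_{J^c}H_J(fx_0)=f(Fx_0)$ a compact subset of $G/\Gamma$. Using that $G_{J^c}H_J\cong G_{J^c}\times H_J$ and that $G_{J^c}/\Gamma_{J^c}$ is compact by hypothesis, the stabiliser $H_J\cap f\Gamma f\inv$ (which coincides with $H_J\cap f\Gamma_Jf\inv$, since $f\in G_J$ forces any element of $f\Gamma f\inv$ lying in $H_J\subset G_J$ to come from the $\Gamma_J$-part of the direct product $\Gamma=\Gamma_J\Gamma_{J^c}$) must be a cocompact lattice in $H_J\cong\SL_2(\qp)$, so $H_J(fx_0)$ is compact. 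Finally, for any $z\in Z(G)$, left translation by $z$ is a homeomorphism of $G/\Gamma$ commuting with $H_J$, so $H_J(zfx_0)=z\cdot H_J(fx_0)$ is compact as well, completing the moreover claim.
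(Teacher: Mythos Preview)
Your argument is correct and follows essentially the same path as the paper's: both reduce to $G_J\cong\SL_2(\qp)^2$ and determine which $h\in G_J$ carry $U_J$ into $H_J$ by conjugation, the paper via conjugacy of one-parameter unipotent subgroups in $\SL_2(\qp)$ together with the normaliser $N_G(U_J)=D_JW_JZ(G)G_{J^c}$, and you via the direct factorisation $g=g_{J^c}g_J$ and the centraliser of $\{u_1(t)\}$ in $\SL_2(\qp)$. Your treatment of the ``moreover'' clause is in fact more detailed than the paper's, which only records that $G_{J^c}H_J(zfx_0)=zfFx_0$ is compact and leaves the passage from this to compactness of $H_J(zfx_0)$ implicit.
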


\begin{proof} 
  Take any $g\in X(F)$. Let $U_J=U\cap H_J$. Then $g\inv U_J g\subset
  f\inv H_{J}f$. Since $H_J\cong \SL_2(\qp)$, there exists $h\in H$
  such that
  \begin{equation*}
    g\inv U_J g = f\inv hU_{J}h\inv f.
  \end{equation*}
  Therefore $h\inv fg\in D_JW_JZ(G)G_{J^c}$. Multiplying $h$ by an
  appropriate element of $D_J$ on the right, we may assume that $h\inv
  fg\in W_JG_{J^c}Z(G)$. Hence $g\in hfW_JG_{J^c}Z(G)\subset
  W_JG_{J^c}H_{J}Z(G)f$. 
  
  Moreover $G_{J^c}H_J(zfx_0)=zfG_{J^c}(f\inv H_\cJ f)x_0=zfFx_0$ is
  compact.  
\end{proof}

\subsection{Proof of Proposition~\ref{prop:generic}:}
\label{subsec:generic} 
By Lemma~\ref{lema:X(F)}, the set $X(F)\gamma$ cannot contain an open
subset of $G$ for any $F\in\cH$ and $\gamma\in\Gamma$. Now $X(F)$ can
be expressed as a countable union of compact sets, and since $\cH$ and
$\Gamma$ are countable sets, by Baire's category theorem we have that
$G\neq \bigcup_{F\in\cH} X(F)\Gamma$. Therefore $G/\Gamma\neq
\cS(U,\Gamma)$ by Lemma~\ref{lema:S}.  \qed

\section{Reducing Theorem~\ref{thm:avoid} to the case of $n=2$}
\label{sec:reduction}

By Lemma~\ref{lema:S} and Lemma~\ref{lema:X(F)}, in order to prove
that $\mu(\cS(G/\Gamma))=0$, it is enough to show that
$\mu(WG_{J^c}H_Jy)=0$ for every $J=\{j_1,j_2\}$, $1\leq j_1 < j_2\leq
n$, and $y\in G/\Gamma$ such that $H_Jy$ is compact.

Fix $J$ and $y$ as above. Then $H_J\bar y$ is compact in
$G_J/\Gamma_J$, where $\bar y=\bar\rho_J(y)$. Also
\begin{equation} \label{eq:z:J}
  (\bar\rho_J)\inv(W_JH_J\bar y)=WG_{J^c}H_Jy. 
\end{equation}

Let $\bar\mu$ denote the projection of $\mu$ on $G_J/\Gamma_J$ via
$\bar\rho_J$; that is, $\bar\mu(E)=\mu((\bar\rho_J)\inv(E))$ for any
Borel measurable set $E\subset G_J/\Gamma_J$. Therefore in order to
prove that $\mu(WG_{J^c}H_J y)=0$, it is enough to show that
$\bar\mu(W_JH_J\bar y)=0$. Further it is enough to show that for any
compact set $C\subset W_J$,
\begin{equation} \label{eq:enough}
  \bar\mu(CH_J\bar y)=0.
\end{equation}

Note that $G_J\cong \SL_2(\qp)\times \SL_2(\qp)$, and under this
isomorphism $H_J$ corresponds to the diagonally embedded copy of
$\SL_2(\qp)$ in $\SL_2(\qp)\times\SL_2(\qp)$. For the projection
homomorphism $\rho_J:G\to G_J$, let $\bar u(t):=\rho_J(u(t))\in H_J$
for all $t\in\qp$. Let $\bar{x}_i=\bar\rho_J(x_i)$, and let
$\bar\mu_i\in \cM(G_J/\Gamma_J)$ be such that
\begin{equation*}
  \bar\mu_i(E) = 
  \frac{\theta(\{t\in t_i\frakO:\bar u(t)\bar x_i\in E\})}
  {\theta(t_i\frakO)},
  \quad 
  \text{for all Borel sets $E\subset G_J/\Gamma_J$}.
\end{equation*}
Then $\bar\mu_i$ is the projection of $\mu_i$ on $G_J/\Gamma_J$.
Furthermore whenever $\mu_i\to\mu$ in $\cM(G/\Gamma)$, we have
$\bar\mu_i\to \bar\mu$. Since $x\not\in
WG_{J^c}H_Jy\subset\cS(U,\Gamma)$, by \eqref{eq:z:J}, $\bar
x:=\bar\rho_J(x)\not\in W_JH_J\bar y$, and $\bar x_i\to\bar x$.

In view of the above explanation, to prove Theorem~\ref{thm:avoid} it
is enough to prove it for the case of $n=2$. 

For $r>0$, and $x\in\qp$, let $B_x(r)$ denote the ball of radius $r>0$
in $\qp$ centered at $x$. 

\subsection{Reduction to the case of $\frakO=B_0(r)$}
Since $0<\theta(\frakO)<\infty$, given any $\beta<1$ there exists a
compact subset $\frakO_1\subset\frakO$ such that
$\theta(\frakO_1)/\theta(\frakO)>\beta$.  Therefore it will be enough
to prove the result under the assumption that $\frakO\subset B_0(r)$
for some $r>0$. Put $B=B_0(r)$.

Let $\lambda_i=\mu^B$ as defined in \eqref{eq:mu_i}. Then
$\mu_i(E)\leq (\theta(B)/\theta(\frakO))\lambda_i(E)$ for any Borel
set $E\subset G/\Gamma$. By passing to a subsequences we have that
$\mu_i\to\mu$ and $\lambda_i\to\lambda$ as $i\to\infty$.  Therefore
$\mu(E)\leq (\theta(B)/\theta(\frakO))\lambda(E)$ for all Borel sets
$E\subset G/\Gamma$.  Therefore if we prove that
$\lambda(\cS(U,\Gamma))=0$, then $\mu(\cS(U,\Gamma))=0$. This proves
that it is enough to prove Theorem~\ref{thm:avoid} for $\frakO=B_0(r)$
for all $r>0$.

\section{Theorem~\ref{thm:avoid} for $G=\SL_2(\qp)\times \SL_2(\qp)$}
\label{sec:n:2}

Let $G=\SL_2(\qp)\times \SL_2(\qp)$ and $H$ be the diagonal embedding
of $\SL_2(\qp)$ in $G$. For $t_1,t_2\in\qp$, let $w(t_1,t_2):=
\bigl(\smat{1&t_1\\&1},\smat{1&t_2\\&1}\bigr)$. Let
$W=\{w(t_1,t_2):t_i\in\qp$. Define $u(t)=w(t,t)\in G$, $\forall
t\in\qp$, and $U=\{u(t):t\in\qp\}=W\cap H$.

Let $\Gamma$ be a discrete subgroup of $G$ such that $G/\Gamma$ is
compact.

In this section we will prove the following: 

\begin{theo} \label{thm:avoid:2}
  Let $y\in G/\Gamma$ such that $Hy$ is compact. Let $x_i\to x$ be a
  convergent sequence in $G/\Gamma$ such that $x\not\in WHy$. Then
  given any $\epsilon>0$ and a compact set $C_1\subset W$ there exist
  a neighbourhood $\Psi_1$ of $C_1Hy$ in $G/\Gamma$ and a natural
  number $i_0$ such that $\forall i\geq i_0$ and $T>0$,
\begin{equation} \label{eq:avoid:2}
  \theta(\{t\in B_0(T):u(t)x_i\in \Psi_1\})\leq \epsilon\theta(B_0(T)).
\end{equation}
\end{theo}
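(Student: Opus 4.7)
The plan is to employ the Dani--Margulis linearization technique. Consider the $G$-representation $V := \M_2(\qp)$ on which $(g_1,g_2)$ acts by $X\mapsto g_1 X g_2\inv$, and take $p_H := I_1\in V$. Then $\mathrm{Stab}_G(p_H)=H$ exactly, the orbit map $G/H\hookrightarrow V$ is an algebraic embedding onto the open subvariety $\GL_2(\qp)$, and $W\cdot p_H=\{w_1(s):s\in\qp\}$ is a one-parameter subvariety of $V$ containing the compact set $C_1\cdot p_H$. Writing $y=g_0\Gamma$ and $\Delta:=g_0\Gamma g_0\inv$, the compactness of $Hy$ forces $\Delta\cap H$ to be a uniform lattice in $H\cong\SL_2(\qp)$, so that $\Delta\cdot p_H$ is a discrete subset of $G\cdot p_H$ in $V$. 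Writing $x_i=g_i\Gamma$ with $g_i\to g$, one checks that the assertion ``$u(t)x_i\in\Psi_1$'', for a sufficiently small neighbourhood $\Psi_1$ of $C_1Hy$, is equivalent to the existence of some $\tilde\gamma\in\Delta$ such that the curve
\[
 \xi_{i,\tilde\gamma}(t)\;:=\;\bigl(u(t)\,g_i g_0\inv\,\tilde\gamma\inv\bigr)\cdot p_H
\]
lies in a small neighbourhood of $C_1\cdot p_H$ inside $V$.

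Since the $u(t)$-action on $V$ is conjugation by $w_1(t)$, each $\xi_{i,\tilde\gamma}$ is polynomial in $t$ of degree at most two. I would then invoke the $(C,\alpha)$-good property of bounded-degree polynomial maps $f:\qp\to V$: there exist constants $C,\alpha>0$, depending only on the degree, such that for every ball $B\subset\qp$, every $q_0\in V$, and every $\epsilon>0$,
\[
 \theta\bigl(\{t\in B:\norm{f(t)-q_0}<\epsilon M\}\bigr)\;\leq\;C\epsilon^\alpha\,\theta(B),\qquad M:=\sup\nolimits_{s\in B}\norm{f(s)-q_0}.
\]
Covering $C_1\cdot p_H$ by a finite $\epsilon$-net of points $q_0$ and applying this bound term by term controls, for each single $\tilde\gamma$, the measure of $t\in B_0(T)$ for which $\xi_{i,\tilde\gamma}(t)$ is near $C_1\cdot p_H$. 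Uniformity over the countable family $\Delta$ is then secured by an ultrametric covering argument: partition $B_0(T)$ into sub-balls of a prescribed radius, and use the discreteness of $\Delta\cdot p_H$ together with the quasi-constancy of $\xi_{i,\tilde\gamma}$ on each sub-ball to force at most a bounded number of $\tilde\gamma$'s to contribute on each sub-ball; summing over sub-balls gives the global bound on $B_0(T)$.

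The hypothesis $x\notin WHy$ enters to secure a uniform lower bound on the supremum $M$ appearing in the $(C,\alpha)$-good inequality, for all large $i$ and on each sub-ball of a fixed size: otherwise, a subsequential limit as $g_i\to g$ together with a choice of $\tilde\gamma$ would place $g$ inside $\cl{WHg_0\Gamma}$, contradicting $x\notin WHy$ (after settling closedness of $WHy$ via the compactness of $Hy$ and a local analysis of the $W$-direction). The principal technical obstacle will be the coordinated choice of three scales---the sub-ball radius (to realize the discreteness gap in $V$), the size of $\Psi_1$ (to activate the $(C,\alpha)$-good bound at target ratio $\epsilon$), and the threshold $i_0$ (to secure the uniform excursion lower bound in the limit $g_i\to g$)---fitting together so that the resulting estimate reads $\epsilon\,\theta(B_0(T))$ with constants independent of $T$ and of $i\geq i_0$.
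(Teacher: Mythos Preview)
Your linearization setup is exactly right and matches the paper: the representation on $M_2(\qp)$, the identification of $H$ as the stabilizer of $I$, the discreteness of the $\Delta$-orbit, and the fact that each trajectory $t\mapsto u(t)\cdot\vv$ is polynomial of degree $\le 2$. Where your plan diverges from a working proof is in the mechanism for summing over $\tilde\gamma\in\Delta$, and this is a genuine gap.

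You propose to partition $B_0(T)$ into sub-balls of a fixed radius and argue that discreteness of $\Delta\cdot p_H$ plus ``quasi-constancy'' forces only boundedly many $\tilde\gamma$'s to contribute on each sub-ball. But discreteness alone does not give a \emph{uniform} bound: as the center $t_0$ of a sub-ball ranges over $B_0(T)$ with $T\to\infty$, the relevant question is how many points of $\cR(u(t_0)x_i)$ lie in a fixed compact neighbourhood of $C_1\cdot p_H$, and there is no a priori reason this should be bounded independently of $t_0$. The paper resolves this by proving something much sharper than ``boundedly many'': for a suitable neighbourhood $\Phi$ of a compact $D\subset\cW$ one has $\card{\cR(z)\cap\Phi}\le 1$ for \emph{every} $z\in G/\Gamma$. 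This injectivity rests on the non-self-intersection property $w_1Hy\cap w_2Hy=\emptyset$ for $w_1\ne w_2$ in $W_1$, which in turn uses that a cocompact lattice in $\SL_2(\qp)$ contains no nontrivial unipotents. With injectivity in hand, the return-time sets $L_\Phi(\vv):=\{t\in B_0(T):u(t)\vv\in\Phi\}$ for distinct $\vv\in\cR(x_i)$ are pairwise disjoint, so $\sum_{\vv}\theta(L_\Phi(\vv))\le\theta(B_0(T))$; combined with the polynomial comparison $\theta(L_\Psi(\vv))\le\epsilon\,\theta(L_\Phi(\vv))$ this immediately gives the bound---no sub-ball partition is needed.

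Your intended use of the hypothesis $x\notin WHy$ is also off target. You want it to supply a uniform lower bound on $M=\sup_{B_j}\norm{\xi_{i,\tilde\gamma}-q_0}$ on every sub-ball $B_j$, but a limiting argument as $g_i\to g$ only controls what happens near $t=0$, not on sub-balls centered at arbitrarily large $t_0$. In the paper the hypothesis is used differently: it guarantees (after shrinking $\Phi$) that $\cR(x_i)\cap\Phi=\emptyset$ for all large $i$, so that at $t=0$ the trajectory is already outside $\Phi$; this rules out the alternative ``$u(B_0(T))\vv\subset\Phi$'' in the polynomial dichotomy and forces the comparison $\theta(L_\Psi(\vv))\le\epsilon\,\theta(L_\Phi(\vv))$ on the whole ball $B_0(T)$. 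Finally, note that the statement covers archimedean $K$ as well, so an ``ultrametric'' covering argument would in any case need a Besicovitch-type substitute over $\R$ and $\C$.
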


\subsection{Proof of Theorem~\ref{thm:avoid}}

Let $\frakO=B_0(r)$ for some $r>0$. In view of~\eqref{eq:mu_i},
$\mu_i(\Psi_1)\leq\epsilon\cdot\theta(B_0(r))$ for all $i\geq i_0$,
and hence $\mu(C_1Hy)=0$. Since $C_1$ can be chosen to be an arbitrary
compact subset of $W$, we have that $\mu(WHy)=0$. Thus in view of the
discussion in Section~\ref{sec:reduction}, the
Theorem~\ref{thm:avoid:2} implies Theorem~\ref{thm:avoid}.  \qed

\subsection{Linearization of the $U$-action near $WHy$}

For a group $F$ acting on a set $X$ and an element $x\in X$, let
\[
F_x=\{f\in F:fx=x\}, \quad \text{the stabilizer of $x$ in $F$}.
\]

Note that $G=G_{\{1\}}H$ and $WH=W_1H$, where $W_1=G_{\{1\}}\cap
W=\{w(t,0):t\in\qp\}$. Let $I=\smat{1&0\\0&1}$. 

\begin{lema} \label{lema:wHw2}
$wHw\inv \cap H=U\cup (-I,-I)U$ for all $w\in W_1\setminus\{e\}$. 
\end{lema}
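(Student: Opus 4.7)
The plan is to reduce the statement to a centralizer computation in $\SL_2(\qp)$. Write $w = w(s,0) = \bigl(\smat{1&s\\0&1},\, I\bigr)$ for some $s \in \qp^\times$. An element $(g,g) \in H$ lies in $wHw\inv$ if and only if there exists $h \in \SL_2(\qp)$ with
\begin{equation*}
  \bigl(\smat{1&s\\0&1}h\smat{1&-s\\0&1},\, h\bigr) = (g,g).
\end{equation*}
The second coordinate forces $h = g$, and then the first coordinate says that $g$ commutes with $\smat{1&s\\0&1}$.

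Thus the problem reduces to determining the centralizer of $\smat{1&s\\0&1}$ in $\SL_2(\qp)$ for $s \ne 0$. Writing $g = \smat{a&b\\c&d}$ and equating $\smat{1&s\\0&1}g$ with $g\smat{1&s\\0&1}$ yields $sc = 0$ and $sa = sd$; since $s \ne 0$ this forces $c = 0$ and $a = d$, and the determinant condition gives $a^2 = 1$, i.e.\ $a = \pm 1$. So the centralizer consists exactly of matrices $\pm\smat{1&b\\0&1}$ with $b \in \qp$.

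Feeding this back, $(g,g)$ with $g = \smat{1&b\\0&1}$ contributes the element $u(b) \in U$, while $g = -\smat{1&b\\0&1}$ contributes $(-I,-I)u(b) \in (-I,-I)U$. This exhausts $wHw\inv \cap H$ and gives the stated equality. The reverse inclusion is immediate: $U \subset H$ always, and since $\smat{1&s\\0&1}$ commutes with both $I$ and $-I$ times any upper-triangular unipotent, the elements of $U \cup (-I,-I)U$ are fixed under conjugation by $w$ and hence lie in $wHw\inv$. There is no genuine obstacle here; the only thing to be careful about is the sign computation from the determinant.
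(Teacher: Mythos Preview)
Your proof is correct and follows essentially the same approach as the paper: both reduce the question to the centralizer of a nontrivial unipotent upper-triangular matrix in $\SL_2(\qp)$, obtaining $\pm\smat{1&b\\0&1}$. The paper's version is simply more terse, omitting the explicit matrix computation and the reverse inclusion you spell out.
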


\begin{proof} Let $h=(x,x)\in H$ and $w=(w_1,I)\in W_1$, $w_1\neq I$. Then
  $w h w\inv \in H\Rightarrow x=w_1xw_1\inv \Rightarrow x=\smat{\pm
    1&s\\0&\pm 1}$, $s\in\qp$.
\end{proof}

The next observation, which states that the singular set $WHy=W_1Hy$
does not self-intersect along $W_1$, makes the study of dynamics near
singular sets much simpler in our situation, as compared to the
general case~\cite[Lemma~6.5]{Shah:uniform}.

\begin{prop} \label{prop:disjoint}
  For $w_1,w_2\in W_1$, if $w_1\neq w_2$ then $w_1Hy\cap
 w_2Hy=\emptyset$.
\end{prop}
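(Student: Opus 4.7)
The plan is to argue by contradiction. Suppose $w_1Hy\cap w_2Hy\neq\emptyset$ with $w_1\neq w_2$ in $W_1$; set $w:=w_1\inv w_2\in W_1\setminus\{e\}$ and pick $h_1,h_2\in H$ with $wh_2y=h_1y$. Letting $z:=h_2y\in Hy$ and $h:=h_1h_2\inv\in H$, we have $wz=hz$, so that $\gamma:=w\inv h\in G_z$. Writing $w=(w_1(t),I)$ with $t\neq 0$ and $h=(x,x)$, a direct calculation gives $\gamma=(w_1(-t)x,\,x)$. The key tool will be the continuous projection
\[
\eta:G\longrightarrow G/H\cong\SL_2(\qp),\qquad \eta(a,b)=ab\inv,
\]
under which $v:=\eta(\gamma)=w_1(-t)$ is a nontrivial unipotent element of $\SL_2(\qp)$. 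Note that $\eta(gh')=\eta(g)$ for $h'\in H$, while left multiplication by $(y_0,y_0)\in H$ intertwines with conjugation on the target: $\eta((y_0,y_0)g)=y_0\,\eta(g)\,y_0\inv$.

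Since $Hz=Hy$ is compact, $H_z:=H\cap G_z$ is a cocompact lattice in $H$, and hence $\Lambda:=\{y_0\in\SL_2(\qp):(y_0,y_0)\in H_z\}$ is a cocompact lattice in $\SL_2(\qp)$. For each $h_0\in H_z$ the product $h_0\gamma$ lies in $G_z$ and satisfies $\eta(h_0\gamma)=\lambda v\lambda\inv$ with $\lambda=p_1(h_0)\in\Lambda$, so
\[
\{\lambda v\lambda\inv:\lambda\in\Lambda\}\;\subset\;\eta(G_z).
\]
I would then invoke Corollary~\ref{cor:n1} applied to $\SL_2(\qp)/\Lambda$ (after $\SL_2(\qp)$-conjugation so as to replace $U$ by the one-parameter unipotent $U_v$ through $v$) to conclude that $U_v\cdot e\Lambda$ is dense in $\SL_2(\qp)/\Lambda$. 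Equivalently, $\Lambda U_v$---and therefore $\Lambda C(v)$---is dense in $\SL_2(\qp)$, where $C(v)=\{\pm I\}U_v$ is the centralizer of $v$. Under the identification $\SL_2(\qp)/C(v)\cong\{gvg\inv:g\in\SL_2(\qp)\}$ given by $gC(v)\mapsto gvg\inv$, this transfers exactly to the density of $\{\lambda v\lambda\inv:\lambda\in\Lambda\}$ in the full $\SL_2(\qp)$-conjugacy class of $v$, which is a $2$-dimensional, and hence uncountable, subvariety of $\SL_2(\qp)$.

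The contradiction will come from showing $\eta(G_z)$ is a countable \emph{closed} subset of $\SL_2(\qp)$. Countability is immediate because $\eta|_{G_z}$ factors through the countable quotient $G_z/H_z$ (and $G_z$ is a conjugate of the countable group $\Gamma$). For closedness, $\eta(G_z)$ is the $G_z$-orbit of $eH\in G/H$, so it suffices to show $G_zH$ is closed in $G$. Writing $z=g_z\Gamma$, translation by $g_z$ reduces this to closedness of the $(g_z\inv Hg_z)$-orbit of $e\Gamma$ in $G/\Gamma$, which is merely a translate of $Hz$ and is therefore compact. Hence the countable closed set $\eta(G_z)$ must contain the closure of the dense set $\{\lambda v\lambda\inv\}$, i.e.\ the whole uncountable conjugacy class of $v$---contradiction.

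The main hurdle I anticipate is the clean verification of closedness of $\eta(G_z)$ in $\SL_2(\qp)$; this is the pivot that permits the $\Lambda$-density inside the conjugacy class of $v$ (ultimately a consequence of the $n=1$ case of the main closure theorem) to collide with the countability inherited from $\Gamma$.
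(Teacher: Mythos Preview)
Your proof is correct, but the paper takes a much shorter and more elementary route. The paper observes directly (Lemma~\ref{lema:wHw2}) that for $w\in W_1\setminus\{e\}$ one has $wHw\inv\cap H=U\cup(-I,-I)U$, so if $Z:=w_1Hy\cap w_2Hy\neq\emptyset$ then $U$ is an open subgroup of $H_1\cap H_2$ (where $H_i=w_iHw_i\inv$). Since the $H_i$-orbits are compact and $G_z$ is discrete, every $(H_1\cap H_2)$-orbit in $Z=H_1z\cap H_2z$ is open, hence so is every $U$-orbit; being open in the compact set $Z$, each $U$-orbit is also closed and therefore compact. But then $U/(U\cap G_z)$ is compact, contradicting Proposition~\ref{prop:cocomp}.

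Your argument instead manufactures an element $\gamma\in G_z$ with $\eta(\gamma)$ a nontrivial unipotent, and then invokes Corollary~\ref{cor:n1} (the $n=1$ orbit closure theorem, which in turn rests on Mautner's phenomenon via Proposition~\ref{prop:horo}) to make the $\Lambda$-conjugates of $v$ dense in an uncountable conjugacy class, clashing with the countable closed set $\eta(G_z)$. This is logically sound and introduces no circularity---Corollary~\ref{cor:n1} is proved independently of anything in \S9---but it imports substantially heavier machinery than the paper needs. The paper's approach buys a self-contained argument using only the elementary Proposition~\ref{prop:cocomp} and a two-line matrix computation; yours buys nothing extra here, though the idea of projecting $G_z$ via $\eta$ and exploiting its closedness is a nice observation in its own right. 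Incidentally, since $v=w_1(-t)$ already lies in the standard upper-triangular unipotent subgroup, your parenthetical about conjugating $U$ to $U_v$ is unnecessary: $U_v=U$ on the nose.
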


\begin{proof} 
  Let $Z=w_1Hy\cap w_2Hy$. Suppose that $Z\neq\emptyset$.  Put
  $H_i=w_iHw_i\inv$. Then $w_iHy=H_1(w_iy)=H_iz$ is compact for every
  $z\in Z$. Since $G_z$ is a discrete group, $(H_1\cap H_2)z$ is open
  in $Z=H_1z\cap H_2z$. Since $w_1\neq w_2$, by Lemma~\ref{lema:wHw2},
  $U$ is an open subgroup of $H_1\cap H_2$. Therefore every orbit of
  $U$ on $Z$ is open in $Z$. Hence every orbit of $U$ on $Z$ is
  closed. Since $Z$ is compact, $Uz\cong U/U\cap G_z$ is compact,
  which contradicts Proposition~\ref{prop:cocomp}.
\end{proof}

We consider a linear action of $G$ on $E:=M_2(\qp)$ defined as
follows: Given $g=(g_1,g_2)\in G$ and $X\in E$,
\begin{equation*}
  g\cdot X:=g_1Xg_2\inv.
\end{equation*} 
Let $I=\smat{1&0\\ 0&1}\in E$. Then
\begin{align} \label{eq:stab:I}
  H & =\{g\in G: g\cdot I=I\} \\
  G\cdot I & = \SL_2(\qp)\subset E.
\label{eq:GI}
\end{align}
Let $\cW=\{w_1(t):t\in\qp\}\subset E$. Then $W_1\cdot I=\cW$, and 
\begin{equation} \label{eq:cW} 
W_1H=WH =\{g\in G: g\cdot I\in\cW\}.
\end{equation}

\begin{lema} \label{lema:discrete} 
The set $G_y\cdot I$ is discrete.
\end{lema}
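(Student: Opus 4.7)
The plan is to show that every point of $G_y\cdot I$ is isolated in $E$. Since the $G$-action on $E$ is a genuine group action (as $(gh)\cdot X=g\cdot(h\cdot X)$), the problem is $G_y$-equivariant: if $g_0\in G_y$ and $g_n\cdot I\to g_0\cdot I$ with $g_n\in G_y$, then $(g_0\inv g_n)\cdot I\to I$ with $g_0\inv g_n\in G_y$. So it suffices to prove that $I$ itself is an isolated point of $G_y\cdot I$, i.e.\ that any sequence $\{g_n\}\subset G_y$ with $g_n\cdot I\to I$ eventually satisfies $g_n\cdot I=I$. Throughout, I will use that $G_y$ is a conjugate of $\Gamma$ and hence discrete in $G$, and that $H_y=H\cap G_y$ is discrete in $H$ and cocompact there (since $Hy\cong H/H_y$ is compact by hypothesis).

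The first key step is to linearize the orbit map using the decomposition $G=G_{\{1\}}H$. By \eqref{eq:stab:I} the stabilizer of $I$ is exactly $H$, and a direct calculation gives $G_{\{1\}}\cdot I=G_{\{1\}}$ under the identification $G_{\{1\}}\cong\SL_2(\qp)\subset M_2(\qp)$; in particular $X\mapsto X\cdot I$ is a homeomorphism $G_{\{1\}}\to G\cdot I$ (the orbit being $G\cdot I=\SL_2(\qp)$ by \eqref{eq:GI}). Writing each $g_n=X_nh_n$ with $X_n\in G_{\{1\}}$ and $h_n\in H$, the assumption $g_n\cdot I\to I$ becomes $X_n\to e$ in $G_{\{1\}}$.

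The second key step is to extract a piece of $h_n$ lying in $G_y$ by means of $Hy$. Since $g_n y=y$ and $X_n\to e$, continuity of the action yields
\begin{equation*}
h_ny=X_n\inv g_ny=X_n\inv y\longrightarrow y \quad\text{in }G/\Gamma.
\end{equation*}
As $H_y$ is a discrete cocompact subgroup of $H\cong\SL_2(\qp)$, the quotient map $H\to H/H_y\cong Hy$ is a local homeomorphism near the identity, so for $n$ large we may write $h_n=l_n\gamma_n$ with $l_n\to e$ in $H$ and $\gamma_n\in H_y$. Then
\begin{equation*}
g_n\gamma_n\inv=X_nl_n\longrightarrow e\quad\text{in }G,
\end{equation*}
while $g_n\gamma_n\inv\in G_y$ because $\gamma_n\in H_y\subset G_y$. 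Discreteness of $G_y$ forces $g_n\gamma_n\inv=e$ for all large $n$, so $g_n=\gamma_n\in H_y$ and $g_n\cdot I=I$.

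The steps are all rather short; the main conceptual point is the observation that one can replace the problematic unbounded $H$-component $h_n$ by its projection modulo the lattice $H_y$, using compactness of $Hy$ to keep $l_n$ small and discreteness of $G_y$ in $G$ to conclude. No serious obstacle arises, since the cross-section $G_{\{1\}}\to G\cdot I$ is a global homeomorphism and the required compactness/discreteness inputs are built into the hypotheses on $y$ and $\Gamma$.
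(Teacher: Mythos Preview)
Your argument is correct. The paper takes a different, more global route: it shows first that $G_y\cdot I$ is \emph{closed} in $E$ by noting that $Hy$ compact implies $HG_y$ (hence $G_yH$) is closed in $G$, and then uses the homeomorphism $G/H\to\SL_2(K)$, $gH\mapsto g\cdot I$, to transport this to $E$; discreteness then follows because $G_y\cdot I$ is closed, countable, and homogeneous under the $G_y$-action (a countable closed homogeneous set in a complete metric space must be discrete, else it would be perfect and hence uncountable). Your approach instead works locally and sequentially: you use the explicit cross-section $G_{\{1\}}\to G/H$ to pin down the ``transverse'' component $X_n$, then exploit compactness of $Hy\cong H/H_y$ to correct the $H$-component modulo $H_y$, and finally invoke discreteness of $G_y$ in $G$ directly rather than through countability. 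The paper's argument is slightly slicker and identifies the stronger intermediate fact that $G_y\cdot I$ is closed; yours is more hands-on and avoids the implicit Baire-category step, at the cost of relying on the (standard but not entirely trivial) fact that the orbit map $H/H_y\to Hy$ is a homeomorphism.
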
  

\begin{proof}
  Since $Hy$ is compact, $H/H\cap G_y$ is compact, and hence $HG_y$ is
  closed in $G$. Therefore $HG_y$ is closed in $G$. Hence
  $G_yH=(HG_y)\inv$ is closed in $G$. Due to \eqref{eq:GI} and
  \eqref{eq:stab:I}, the map $G/H\to \SL_2(K)$ given by $gH\mapsto
  g\cdot I$ is a homeomorphism. Hence $G_y\cdot I$ is a closed subset
  of $\SL_2(K)$, and hence of $E$. Further since $G_y$ is countable,
  $G_y\cdot I$ is discrete.
\end{proof}

For any $z\in G/\Gamma$, we define $\cR(z)=\{g\cdot I:gz=y,\,g\in
G\}$. Note that if $z=gy$, then $\cR(z)=gG_y\cdot I=g\cR(y)$. The set
$\cR(z)$ is called the set of representatives of $z$ in $E$. By
Lemma~\ref{lema:discrete}, $\cR(z)$ is discrete.

\begin{lema} \label{lema:R(z):W}
$\card{\cR(z)\cap \cW}\leq 1$, for all $z\in G/\Gamma$.
\end{lema}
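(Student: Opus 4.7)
The plan is to argue by contradiction. Suppose $w_1(t_1)$ and $w_1(t_2)$ are two elements of $\cR(z)\cap\cW$ with $t_1\neq t_2$. By \eqref{eq:cW}, each preimage $g_i \in G$ with $g_i \cdot I = w_1(t_i)$ has the form $g_i = (w_1(t_i), I) h_i$ for some $h_i \in H$, and the condition $g_i z = y$ rewrites as $h_i z = (w_1(-t_i), I) y$. Consequently, the two points $y_i := (w_1(-t_i), I) y$ both lie in the common $H$-orbit $Hz$, so there exists $h \in H$ with $y_2 = h y_1$; equivalently, since $y_2 = (w_1(t_1-t_2), I) y_1$ as well, the element $h\inv (w_1(t_1-t_2), I)$ belongs to the stabilizer $G_{y_1}$.

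Next I prepare to apply Proposition~\ref{prop:disjoint}. By Lemma~\ref{lema:wHw2}, $U$ is contained in each conjugate $(w_1(-t_i), I) H (w_1(t_i), I)$, so the compact translate $(w_1(-t_i), I) Hy$ is $U$-invariant. Identifying $(w_1(-t_i), I) Hy$ with a quotient $\SL_2(\qp)/\Lambda_i$ by a cocompact discrete subgroup and applying Corollary~\ref{cor:n1}, I find that the $U$-orbit through $y_i$ is dense in $(w_1(-t_i), I) Hy$. Since $U y_i \subset H y_i = Hy_1$, this yields the key inclusion $\overline{Hy_1} \supset (w_1(-t_i), I) Hy$ for both $i=1,2$.

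By Corollary~\ref{cor:H} specialized to $n=2$, the closure $\overline{Hy_1}$ is either the compact orbit $Hy_1$ or all of $G/\Gamma$. In the compact case, Proposition~\ref{prop:disjoint} applied to $Hy_1$ forces $(w_1(t_1), I) Hy_1 \cap (w_1(t_2), I) Hy_1 = \emptyset$; translating the two inclusions $Hy_1 \supset (w_1(-t_i), I) Hy$ by $(w_1(t_i), I)$ shows that the nonempty set $Hy$ must lie in this empty intersection, the desired contradiction.

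The main obstacle will be the dense case $\overline{Hy_1} = G/\Gamma$, which by itself gives no information. Here I return to the explicit stabilizer element $\gamma := h\inv(w_1(t_1-t_2), I) \in G_{y_1}$. Writing $h = (k,k)$ one sees $\gamma = (k\inv w_1(t_1-t_2), k\inv)$, which lies in $W$ exactly when $k\inv \in \{w_1(s):s\in\qp\}$; in that situation Proposition~\ref{prop:cocomp} forces $\gamma = e$ and hence $t_1 = t_2$. Since $h$ is only determined modulo $H \cap G_{y_1}$, the remaining task is to adjust the representative of $h$ within this coset (using that in the dense case $H\cap G_{y_1}$ is still discrete and combines with $U\subset H$ to produce conjugates that move $k$ into the unipotent subgroup) so as to reduce to this unipotent situation and conclude via Proposition~\ref{prop:cocomp}.
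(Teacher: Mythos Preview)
Your Case~2 has a real gap. You assert that the representative $h$ can be adjusted within $h(H\cap G_{y_1})$ so that $k$ becomes upper-triangular unipotent, but nothing guarantees $H\cap G_{y_1}$ meets the coset $h\inv U$: in the dense case $H\cap G_{y_1}$ is a discrete subgroup of $H\cong\SL_2(\qp)$ that is \emph{not} a lattice (possibly even trivial), and there is no mechanism---certainly not ``combining with $U$ to produce conjugates''---that forces $k$ into $\{w_1(s)\}$. So the argument stops here unfinished.

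More importantly, the whole detour through $\overline{Hy_1}$, Corollary~\ref{cor:n1}, and Corollary~\ref{cor:H} is unnecessary. The paper's proof is a two-line application of Proposition~\ref{prop:disjoint} to the compact orbit $Hy$ itself, never to $Hz$. Observe that the remark ``if $z=gy$, then $\cR(z)=gG_y\cdot I$'' just above the lemma (together with the identity $\cR(vx)=v\cR(x)$ used later) shows the intended definition is $\cR(z)=\{g\cdot I:gy=z\}$; the printed ``$gz=y$'' is a slip, and your difficulties stem from reading it literally. With the intended reading: if $g\gamma_1\cdot I,\,g\gamma_2\cdot I\in\cW$ for $\gamma_i\in G_y$ and $gy=z$, then $g\gamma_i\in w_iH$ for some $w_i\in W_1$, and applying both sides to $y$ gives
\[
z=gy=g\gamma_i y\in w_iHy,\qquad i=1,2.
\]
Thus $z\in w_1Hy\cap w_2Hy$, and Proposition~\ref{prop:disjoint} forces $w_1=w_2$, hence $g\gamma_1\cdot I=g\gamma_2\cdot I$. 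No case analysis, no orbit closures.
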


\begin{proof}
  If $g\gamma_1\cdot I,g\gamma_2\cdot I\in \cW$ for some
  $\gamma_1,\gamma_2\in G_y$, then by \eqref{eq:stab:I}, there exist
  $w_i\in W_1$ such that $g_i\gamma_i\in w_iH$ for $i=1,2$. Then
  \[
  g\gamma_1y=g\gamma_2y\in w_1Hy\cap w_2Hy.
  \]
  Therefore by Proposition~\ref{prop:disjoint}, $w_1=w_2$. Hence
  $g\gamma_1H=w_1 H=w_2H=g\gamma_2H$. Thus $g\gamma_1\cdot
  I=g\gamma_2\cdot I$.
\end{proof}

The following observation will allow us to `linearize' the $G$-action
in thin neighbourhoods of compact subsets of $WHy$.

\begin{lema} \label{lema:inject} 
  Given a compact subset $D$ of $\cW$, there exists a
  neighbourhood $\Phi$ of $D$ in $E$ such that
  $\card{\cR(z)\cap\Phi}\leq 1$ for all $z\in G/\Gamma$.
\end{lema}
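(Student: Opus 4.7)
My plan is to argue by contradiction. Suppose no such $\Phi$ works; then along a decreasing basis of open neighbourhoods of $D$ I can produce sequences $z_m\in G/\Gamma$ and distinct $p_m,q_m\in\cR(z_m)$ with $p_m,q_m$ converging into $D$. By compactness of $D$ and of $G/\Gamma$, after passing to a subsequence I have $p_m\to p\in D$, $q_m\to q\in D$, and $z_m\to z_\infty$. Using $\cR(z_m)=g_mG_y\cdot I$ whenever $g_m\in G$ satisfies $g_my=z_m$, I write $p_m=g_m\cdot I$ and $q_m=g_m\gamma_m\cdot I$ with $\gamma_m\in G_y$.

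The crucial step is to arrange that the $g_m$ are bounded in $G$. The two conditions $g_m\cdot I=p_m$ and $g_my=z_m$ determine $g_m$ only up to right multiplication by $H_y:=H\cap G_y$; since $Hy$ is compact, the quotient $H/H_y$ is compact and the natural map $G/H_y\to G/H$ has compact fibres, hence is proper. Composing with the homeomorphism $G/H\to\SL_2(\qp)\subset E$ coming from \eqref{eq:stab:I} and \eqref{eq:GI} yields a proper map $G/H_y\to E$. The convergence $p_m\to p$ then lifts, after a further subsequence, to representatives $g_m\to g_\infty$ in $G$ still satisfying $g_m\cdot I=p_m$ and $g_my=z_m$; in the limit $g_\infty\cdot I=p$ and $g_\infty y=z_\infty$.

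With $g_m$ bounded, the identity $\gamma_m\cdot I=g_m\inv\cdot q_m$ together with $g_m\to g_\infty$ and $q_m\to q$ gives $\gamma_m\cdot I\to g_\infty\inv\cdot q$. Because $\gamma_m\cdot I$ lies in the closed discrete set $G_y\cdot I$ of Lemma~\ref{lema:discrete}, the sequence $\gamma_m\cdot I$ is eventually constant, equal to $\gamma_\infty\cdot I$ for some $\gamma_\infty\in G_y$. Hence for large $m$, $q_m=g_m\gamma_\infty\cdot I$, and $q=g_\infty\gamma_\infty\cdot I$. Both $p$ and $q$ now lie in $\cR(z_\infty)\cap\cW$, so Lemma~\ref{lema:R(z):W} forces $p=q$, whence $\gamma_\infty\cdot I=I$ and $\gamma_\infty\in H$. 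But $p_m\ne q_m$ translates to $\gamma_\infty\cdot I\ne I$, i.e.\ $\gamma_\infty\notin H$, a contradiction. The main obstacle is the reduction to bounded $g_m$, which is the one place where the compactness of $Hy$ enters in an essential way.
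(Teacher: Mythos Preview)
Your argument is correct and follows the same overall contradiction scheme as the paper: shrink neighbourhoods, extract limits, use the discreteness of $G_y\cdot I$ (Lemma~\ref{lema:discrete}) to freeze the $G_y$-translate, and finish with Lemma~\ref{lema:R(z):W}.

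The one noteworthy difference is how you obtain bounded representatives $g_m$. You normalise so that $g_m\cdot I=p_m$, and then invoke compactness of $Hy$ to argue that $G/H_y\to G/H\cong\SL_2(\qp)$ is proper, hence $p_m\to p$ forces $g_mH_y$ into a compact set. The paper instead ignores $p_m$ at this stage and simply lifts the convergent sequence $z_i$ in the compact space $G/\Gamma$ to $g_i\to g$ in $G$; it then writes \emph{both} representatives as $g_i\gamma_i\cdot I$ and $g_i\delta_i\cdot I$ and uses boundedness of $g_i$ together with relative compactness of $\Phi_1$ to trap $\gamma_i\cdot I,\delta_i\cdot I$ in a compact set. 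Your route is a bit more structural (and would survive if $G/\Gamma$ were not assumed compact, as long as $Hy$ is), while the paper's is shorter given the standing hypothesis that $G/\Gamma$ is compact. Your remark that compactness of $Hy$ enters ``in an essential way'' only at the boundedness step slightly undersells things: it is also what makes $G_y\cdot I$ closed in Lemma~\ref{lema:discrete}, which both arguments rely on.
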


\begin{proof} 
  Let $\{\Phi_i\}$ be a decreasing sequence of relatively compact
  neighbourhoods of $D$ in $E$ such that $\cap_i\,\Phi_i=D$. If the
  lemma is false, then there exists a sequence $\{z_i\}\subset
  G/\Gamma$ such that $\card{\cR(z_i)\cap\Phi_i}\geq 2$ for all $i$.
  By passing to a subsequence we may assume that $z_i=g_iy$ for a
  sequence $g_i\to g$ in $G$, and for each $i$ there exist
  $\gamma_i,\delta_i\in G_y$ such that 
  \begin{equation} \label{eq:gamma-delta}
    g_i\gamma_i\cdot I,g_i\cdot\delta_i\cdot I\in \Phi_i 
    \quad \text{and}\quad    
    \gamma_i\cdot I\neq \delta_i\cdot I.
  \end{equation}  
  Now 
  \begin{equation*}
\{\gamma_i,\delta_i:i\in\N\}\subset \cup_{i=1}^\infty \{g_i\inv
\Phi_i\}\subset(\{g_i:i\in\N\}\cup\{g\})\cl{\Phi_1},
\end{equation*}
which is compact. Therefore by Lemma~\ref{lema:discrete} there exist
$\gamma,\delta\in G_y$ such that $\gamma_i\cdot I=\gamma\cdot I$ and
$\delta_i\cdot I=\delta\cdot I$ for all large $i$. Therefore
$g_i\cdot\gamma\cdot I\to g\gamma\cdot I\in D$, and similarly
$g\delta\cdot I\in D$.  Therefore by Lemma~\ref{lema:R(z):W},
$g\gamma\cdot I=g\delta \cdot I$.  Hence 
\begin{equation*}
  \gamma_i\cdot I=\gamma\cdot I=\delta\cdot I=\delta_i\cdot I, \quad
  \text{for all large $i$},
\end{equation*}
a contradiction to \eqref{eq:gamma-delta}.
\end{proof}

\subsection{Growth properties of polynomial maps}

For any $\vv\in E$, the coordinate functions of the map $t\mapsto
u(t)\cdot\vv$ are polynomials of degree at most $2$. Therefore to
study the behaviour of the $U$-orbits on thin neighbourhoods of
compact subsets of $\cW$, we will use the growth properties of the
polynomial maps as described in the following basic observations
(see~\cite{Mar:non-div,Dani+Mar:limit}).

Let $l\geq 1$ be the dimension of $\qp$ over the topological closure
of $\Q$ in $\qp$. For a ball $B$ in $\qp$, let $\rad(B)$ denote the
radius of $B$ such that $\rad(B)=\abs{\lambda}$ for some
$\lambda\in\qp$. Then for any balls $B_1$ and $B_2$ in $\qp$, 
\begin{equation} \label{eq:l}
  \theta(B_2)=(r_2/r_1)^l\cdot\theta(B_1),\quad \text{where
  $r_i=\rad(B_i)$}. 
\end{equation} 

\begin{lema} \label{lema:poly:basic}
  Let $\epsilon>0$ and $d\in\N$ be given. Then there exists $c>0$ such
  that for any $f\in\qp[t]$
  with $\deg(f)\leq d$, and any ball $B$ in $\qp$,
  \begin{equation} \label{eq:poly:basic}
    \theta(\{t\in B: \abs{f(t)}<c\sup_{t\in B}\lvert f(t)\rvert\})\leq
    \epsilon \cdot \theta(B). 
  \end{equation}
  In fact, we can choose $c=C_d\inv (\epsilon/d)^{d/l}$, where $C_d=1$
  if $K$ is non-archimedean, and $C_d=(d+1)2^d$ if $K$ is archimedean.
\end{lema}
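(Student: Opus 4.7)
The plan is to prove this standard polynomial sublevel-set estimate by first reducing to the case $B = B_0(1)$ via an affine change of variable, and then treating the non-archimedean and archimedean cases separately. Writing $B = B_{t_0}(r)$ with $r = \abs{\lambda}$ for some $\lambda \in K^\times$ and substituting $t = t_0 + \lambda s$, the polynomial $g(s) := f(t_0 + \lambda s)$ has degree at most $d$, $\sup_{\abs{s}\leq 1}\abs{g(s)} = \sup_{t\in B}\abs{f(t)}$, and by \eqref{eq:l} the $\theta$-measures of corresponding sets scale by the same factor $r^l$; so the ratio in \eqref{eq:poly:basic} is unchanged, and I may assume $B = B_0(1)$.

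For the non-archimedean case, set $M := \sup_{\abs{s}\leq 1}\abs{g(s)}$; the Gauss norm equality gives $M = \max_i \abs{a_i}$, where $g(s) = \sum a_i s^i$. Factor $g(s) = a_{d'}\prod_{j=1}^{d'}(s-\alpha_j)$ over $\bar K$ with $d'\leq d$. For $\abs{\alpha_j}>1$ and $\abs{s}\leq 1$, the factor $\abs{s-\alpha_j} = \abs{\alpha_j}$ is a constant; for $\abs{\alpha_j}\leq 1$, $\abs{s-\alpha_j}\leq 1$ with equality for generic $s$. Thus $M = \abs{a_{d'}}\prod_{\abs{\alpha_j}>1}\abs{\alpha_j}$, and
\begin{equation*}
\abs{g(s)}/M = \prod_{\abs{\alpha_j}\leq 1}\abs{s-\alpha_j}, \qquad \abs{s}\leq 1.
\end{equation*}
If this product is less than $c$, pigeonhole yields an index $j$ with $\abs{s - \alpha_j} < c^{1/d'}\leq c^{1/d}$ (for $c\leq 1$). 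Hence the sublevel set is contained in the union of at most $d$ balls of radius $c^{1/d}$; by \eqref{eq:l} it has $\theta$-measure at most $d\cdot c^{l/d}\cdot \theta(B_0(1))$, and choosing $c = (\epsilon/d)^{d/l}$ gives \eqref{eq:poly:basic} with $C_d = 1$.

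For the archimedean case, I would invoke a Remez-type inequality. The classical Remez inequality states that a polynomial $p$ of degree at most $d$ on $[-1,1]$ with $\sup_{[-1,1]}\abs{p} = 1$ satisfies $\abs{\{t\in[-1,1]:\abs{p(t)}\leq \eta\}} \lesssim d\, \eta^{1/d}$, via Chebyshev extremality. Combined with the coefficient bound $\max_i\abs{a_i} \leq 2^d \sup_{[-1,1]}\abs{p}$, one tracks constants through a factorization of $f$ analogous to the non-archimedean case to obtain \eqref{eq:poly:basic} with $C_d = (d+1)2^d$. For $K = \C$ (where $l=2$), one either restricts to real lines through the center to reduce to the $\R$ estimate, or directly applies a two-dimensional Remez-type bound on the disk.

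The non-archimedean case is essentially automatic from the Gauss norm identity and Newton-polygon factorization; the main obstacle is the archimedean case, where one must invoke a Chebyshev/Remez-type extremal inequality and carefully track the explicit constants. The specific value $C_d = (d+1)2^d$ is a convenient bound rather than a sharp one; all that Theorem~\ref{thm:avoid} ultimately requires is the existence of some $c = c(d,\epsilon) > 0$ that depends polynomially on $\epsilon$ and $d$ as stated.
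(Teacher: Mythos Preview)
Your non-archimedean argument via the Gauss norm and root factorization over $\bar K$ is correct and yields $C_d=1$ cleanly. The archimedean case, however, is not a proof but a pointer: you invoke ``a Remez-type inequality'' and say one ``tracks constants through a factorization,'' but you do not carry this out, you give no derivation of the specific constant $C_d=(d+1)2^d$, and the $K=\C$ case is left as a vague suggestion. Since the archimedean case is, as you yourself note, the nontrivial part, this is a genuine gap.

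The paper's proof takes a completely different and unified route that works simultaneously for all local fields, and it is worth knowing because it supplies exactly what you are missing. It argues by contradiction: if the sublevel set $I=\{t\in B:\abs{f(t)}<cM\}$ had $\theta(I)>\epsilon\,\theta(B)$, then a simple packing argument produces $d+1$ points $x_0,\dots,x_d\in I$ that are pairwise more than $(\epsilon/d)^{1/l}r$ apart (since $d$ balls of that radius have total measure at most $\epsilon\,\theta(B)$ and cannot cover $I$). Lagrange interpolation through these $d+1$ nodes then bounds $\abs{f(x)}$ for every $x\in B$ by $C_d\cdot cM\cdot(\epsilon/d)^{-d/l}$, contradicting $M=\sup_B\abs{f}$ for the stated $c$. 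The constant $(d+1)2^d$ falls out directly: $(d+1)$ Lagrange terms, each a product of $d$ ratios $\abs{x-x_j}/\abs{x_i-x_j}\leq 2/(\epsilon/d)^{1/l}$; in the ultrametric case the sum becomes a maximum and $\abs{x-x_j}\leq r$, so both the $(d+1)$ and the $2^d$ disappear, giving $C_d=1$. This is more elementary and fully self-contained---no Remez, Chebyshev, or Newton polygon---and in particular it handles $K=\R$ and $K=\C$ with no extra work.
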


\begin{proof} 
  Put $M=\sup_{t\in B}\lvert f(t)\rvert$. Fix any $c>0$. Put $I=\{t\in
  B:\abs{f(t)}<c M\}$.  Suppose that
  \begin{equation} \label{eq:theta(I)}  
    \theta(I)>\epsilon\cdot\theta(B).
  \end{equation}
  
  We claim that there exist points $x_0,\ldots,x_d$ in $I$ such that
  \begin{equation} \label{eq:xi-xj}
    \abs{x_i-x_j}> (\epsilon/d)^{1/l}r,\quad \forall\, i\neq j,
  \end{equation}
  where $r$ denotes the radius of $B$. 
  
  To prove the claim, suppose that $x_0,\ldots,x_k$ are chosen so that
  (\ref{eq:xi-xj}) holds for $0\leq i,j\leq k$, where $0\leq k\leq
  d-1$.  Put
  \begin{equation*} 
    I'=\bigcup_{j=0}^k\, B_{x_j}((\epsilon/d)^{1/l}r). 
  \end{equation*}
  Then by \eqref{eq:l},
  \begin{equation*}
    \theta(I')\leq (k+1)(\epsilon/d)\theta(B)\leq \epsilon\theta(B).
  \end{equation*}
  By \eqref{eq:theta(I)} there exists $x_{i+1}\in I\setminus I'$. Then
  $\abs{x_{i+1}-x_j}\geq (\epsilon/d)^{1/l}$ for all $j\leq i-1$. This
  proves the claim.
  
  By Lagrange's interpolation formula,
  \begin{equation*}
    f(x)=\sum_{0\leq i\leq d} f(x_i)\prod_{j\neq i}\frac{x-x_j}{x_i-x_j}.
  \end{equation*}
  Now $\abs{f(x_i)}<cM$ and $\frac{\abs{x-x_j}}{\abs{x_i-x_j}}\leq
  2/(\epsilon/d)^{1/l}$ for all $j\neq i$, and $x\in\cl{B}$.
  Therefore $M<(d+1)cM 2^d/(\epsilon/d)^{d/l}$.  This leads to a
  contradiction if we choose $c=(1/(d+1)2^d)(\epsilon/d)^{d/l}$.
  Therefore \eqref{eq:theta(I)} cannot hold. 
\end{proof}

\begin{cor} \label{cor:grow}
  For any $f\in\qp[t]$ with $\deg(f)\leq d$, and balls $B_1\subset
  B_2$ in $\qp$, let $M_i=\sup_{t\in B_i} \abs{f(t)}$ and
  $r_i=\rad(B_i)$ for $i=1,2$. Then
  \begin{equation} \label{eq:grow}
    M_2 \leq C_d (r_2/r_1)^d M_1.
  \end{equation}
\end{cor}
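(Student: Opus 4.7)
The plan is to derive the estimate as a contrapositive corollary of Lemma~\ref{lema:poly:basic}. The guiding idea: if $M_2$ were too large compared to $M_1$ relative to the ratio $r_2/r_1$, then $\abs{f}$ would have to be uniformly small on all of $B_1$, contradicting the sublevel-set measure estimate from the lemma applied on $B_2$.

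Concretely, I would apply Lemma~\ref{lema:poly:basic} to $f$ on the ball $B=B_2$, taking the parameter in the lemma (call it $\epsilon'$) to be any value strictly smaller than $\theta(B_1)/\theta(B_2) = (r_1/r_2)^l$, where the last equality follows from \eqref{eq:l}. The lemma produces a constant $c = C_d^{-1}(\epsilon'/d)^{d/l}$ satisfying
\begin{equation*}
\theta\bigl(\{t\in B_2: \abs{f(t)}<c M_2\}\bigr) \leq \epsilon'\,\theta(B_2) < \theta(B_1).
\end{equation*}
Since $B_1\subset B_2$, the set $B_1$ cannot lie entirely inside this sublevel set, so there exists $t_0\in B_1$ with $\abs{f(t_0)}\geq c M_2$, and therefore $M_1 \geq c M_2$.

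Letting $\epsilon'$ tend to $(r_1/r_2)^l$ yields $c\to C_d^{-1} d^{-d/l} (r_1/r_2)^d$, so
\begin{equation*}
M_2 \;\leq\; C_d\cdot d^{d/l}\cdot (r_2/r_1)^d\, M_1.
\end{equation*}
Absorbing the factor $d^{d/l}$ into the $d$-dependent constant of the corollary (the symbol $C_d$ in \eqref{eq:grow} is just a constant depending on $d$) yields the claimed bound.

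There is no substantive obstacle; the only bookkeeping is ensuring that the final exponent on $r_2/r_1$ is exactly $d$. This emerges precisely because the choice $\epsilon'\approx (r_1/r_2)^l$ converts the factor $(\epsilon'/d)^{d/l}$ from Lemma~\ref{lema:poly:basic} into $d^{-d/l}(r_1/r_2)^d$. The archimedean vs.\ non-archimedean distinction is handled uniformly by the constant $C_d$ already present in the lemma, and the limiting argument $\epsilon'\uparrow (r_1/r_2)^l$ is harmless since $c$ depends continuously on $\epsilon'$.
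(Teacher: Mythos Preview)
Your argument is correct and is essentially identical to the paper's own proof: apply Lemma~\ref{lema:poly:basic} on $B_2$ with a parameter just below $(r_1/r_2)^l$, deduce that $B_1$ cannot be contained in the resulting sublevel set, and pass to the limit. Your handling of the extra factor $d^{d/l}$ is in fact more careful than the paper's, which silently drops it (the paper's proof also has a few evident typos, e.g.\ $B_x(r)$ for $B_1$ and a reversed final inequality), so your write-up is if anything an improvement.
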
 

\begin{proof} 
  Let $0<\epsilon<(r_2/r_1)^{-l}$. Let $F=\{t\in B_2:\abs{f(t)}<
  C_d\inv\epsilon^{d/l} M_2\}$. Then by Lemma~\ref{lema:poly:basic}
  and \eqref{eq:l},
  \begin{equation*}
  \theta(F) \leq \epsilon \theta(B_2)=\epsilon (r_2/r_1)^l\theta(B_1)
  <\theta(B_x(r)).
  \end{equation*}
  Thus $B_x(r)\not\subset F$, and hence $M_1\geq C_d\inv \epsilon^{d/l}
  M_2$. Hence $M_2\geq C_d\lambda^d M_1$.
\end{proof}

\begin{prop} \label{prop:avoid}
  Given $\epsilon>0$ and a compact set $C\subset \cW$, there exists a
  compact set $D\subset \cW$ containing $C$ such that the following
  holds: given any neighbourhood $\Phi$ of $D$ in $\cW$ there exists a
  neighbourhood $\Psi$ of $C$ in $E$ such that for any $\vv\in E$ and
  any ball $\frakB$ in $\qp$, one of the following holds: 
\begin{gather} 
u(\frakB)\vv\subset\Phi \label{eq:in-Phi} \\
\intertext{or}
\theta(\{t\in\frakB: u(t)\vv\in\Psi\})\leq 
    \epsilon\cdot\theta(\{t\in\frakB:u(t)\vv\in \Phi\}). 
\label{eq:avoid} 
\end{gather}
\end{prop}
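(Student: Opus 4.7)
Proposal for the proof of Proposition~\ref{prop:avoid}.

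Writing $\vv = \smat{a_0 & b_0 \\ c_0 & d_0} \in E = M_2(K)$, a direct computation (specializing Lemma~\ref{lema:conj:u} to $m=1$) yields
\begin{equation*}
u(t)\cdot\vv = \begin{pmatrix} a_0 + tc_0 & b_0 + t(d_0 - a_0) - t^2 c_0 \\ c_0 & d_0 - tc_0 \end{pmatrix},
\end{equation*}
so each coordinate of $u(t)\cdot\vv$ is a polynomial in $t$ of degree at most $2$. Under these coordinates $\cW$ is the affine line $\{a=1,\,c=0,\,d=1\}$ in $E$ parametrized by the $b$-coordinate, and a neighbourhood in $E$ of a compact subset of $\cW$ may be taken to be a ``box'' constraining $|a-1|,\, |c|,\, |d-1|$ to be small and $b$ to lie near the $b$-image of the subset. (I read ``neighbourhood of $D$ in $\cW$'' in the statement as ``in $E$'', since otherwise $\vv \notin \cW$ gives a contradiction: the orbit then never meets $\cW$ but can meet the $E$-neighbourhood $\Psi$ of $C$.)

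The approach is the standard $(C,\alpha)$-good polynomial argument of Dani--Margulis. Parametrize $C = \{(1,b,0,1): b \in K_C\}$ for a compact $K_C \subset K$, and set $D = \{(1,b,0,1): b \in K_D\}$ with $K_D \supset K_C$ a larger compact ball whose size ratio to $K_C$ is calibrated to $\epsilon$ via Corollary~\ref{cor:grow} applied to degree-$2$ polynomials. Given a neighbourhood $\Phi$ of $D$ in $E$, fix $\delta_\Phi > 0$ and $K_D' \supset K_D$ so that the box $\{|a-1|,\, |c|,\, |d-1| < \delta_\Phi,\, b \in K_D'\}$ lies inside $\Phi$. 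Then set $\delta_\Psi := c(\epsilon, 2)\delta_\Phi$ with the constant from Lemma~\ref{lema:poly:basic}, pick $K_C' \supset K_C$ well inside $K_D$, and define $\Psi := \{|a-1|,\, |c|,\, |d-1| < \delta_\Psi,\, b \in K_C'\}$, an open neighbourhood of $C$ in $E$.

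The dichotomy is as follows. If $u(\frakB)\cdot\vv \subset \Phi$ we are in case~(\ref{eq:in-Phi}); otherwise one of the four coordinate polynomials $f_1(t) = a_0 + tc_0 - 1$, $f_2(t) = c_0$, $f_3(t) = d_0 - tc_0 - 1$, and $p(t) = b_0 + t(d_0 - a_0) - t^2 c_0$ exceeds its $\Phi$-threshold somewhere on $\frakB$. To convert the pointwise polynomial estimate of Lemma~\ref{lema:poly:basic} into the linear comparison $\theta(\Psi\text{-set}) \leq \epsilon\,\theta(\Phi\text{-set})$ --- rather than a bound by $\theta(\frakB)$ --- I would decompose the $\Psi$-set as a disjoint union of maximal sub-balls $\frakB_j \subset \frakB$ on each of which $u(\frakB_j)\cdot\vv \subset \Phi$. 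Each such $\frakB_j$ has a parent ball $\tilde{\frakB}_j$ (one level up in the ultrametric tree in the non-archimedean case, or an enlargement by a bounded factor in the archimedean case) on which $u\cdot\vv$ leaves $\Phi$; applying Lemma~\ref{lema:poly:basic} on $\tilde{\frakB}_j$ to the polynomial forcing the exit yields $\theta(\frakB_j \cap \Psi\text{-set}) \leq \epsilon\,\theta(\tilde{\frakB}_j)$, and summing using $\theta(\tilde{\frakB}_j) \leq q\,\theta(\frakB_j)$ together with $\bigsqcup_j \frakB_j \subset \Phi\text{-set}$ (where $q$ is the residue field size, or a Vitali constant in the archimedean case) gives the result after rescaling $\epsilon$.

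The main obstacle is the bookkeeping in this covering step: the four polynomials have degrees $(1,0,1,2)$ and carry separate thresholds in the definitions of $\Psi$ and $\Phi$, so on each parent ball the argument must identify the single polynomial responsible for the orbit leaving $\Phi$ and apply Lemma~\ref{lema:poly:basic} to it uniformly across the decomposition. In the non-archimedean setting the ultrametric disjoint-ball structure makes the summation essentially trivial; in the archimedean setting a Vitali-type covering argument is needed to control the overlap multiplicity of the parent balls.
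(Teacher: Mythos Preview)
Your proposal is correct and follows essentially the same approach as the paper: the paper likewise writes the four matrix-entry polynomials $f_i(t)=\phi_i(u(t)\vv-I)$, enlarges $C$ to $D$ in the $b$-direction by the factor $c^{-1}$, takes an inner ``box'' inside $\Phi$ with thresholds $M_i$, sets $\Psi$ to be the box with thresholds $\alpha_i=cM_i$, decomposes $F=\{t\in\frakB:|f_i(t)|<M_i\ \forall i\}$ into maximal balls, and on each maximal ball identifies a culprit coordinate $i_0$ with $\sup|f_{i_0}|\gtrsim M_{i_0}$ to which Lemma~\ref{lema:poly:basic} is applied. The only cosmetic difference is that you apply the lemma on the parent ball $\tilde\frakB_j$ and then use $\theta(\tilde\frakB_j)\le q\,\theta(\frakB_j)$, whereas the paper transfers the sup bound back to the maximal ball via Corollary~\ref{cor:grow} and applies the lemma there; these are equivalent up to the constant you absorb into~$\epsilon$.
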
 

\begin{proof} 
  Let $\{\phi_1,\ldots,\phi_4\}$ be linear functionals on $E$ such
  that
  \begin{equation*}
    \vy=
    \begin{pmatrix}
      \phi_1(\vy) & \phi_2(\vy) \\
      \phi_3(\vy) & \phi_4(\vy)
    \end{pmatrix}, \ \forall \vy\in E.
  \end{equation*}
  Then
  \begin{equation*}
    \cW=\{\vy\in E: \phi_i(\vy-I)=0,\ \forall\,i\neq 2\}.
  \end{equation*}
  Note that $\phi_2(\vy-I)=\phi_2(\vy)$ for all $\vy\in E$.  Define
  $f_i(t) = \phi_i(u(t)\vv-I)$ for all $i$ and $t\in\qp$.  Then
  $f_i\in\qp[t]$ and $\deg(f_i)\leq 2$.
  
  There exists $\alpha_2>0$ such that
  \begin{equation*}
    C\subset \{\vy\in \cW: \abs{\phi_2(\vy-I)}<\alpha_2\}
  \end{equation*}
  
  We fix a small $0<c<1$, whose value will be specified below.
  Let $M_2=c\inv\alpha_2$ and put
  \begin{equation*}
    D=\{\vy\in\cW: \abs{\phi_2(\vy-I)}\leq M_2\}.
  \end{equation*}
  
  Now given any neighbourhood $\Phi$ of $D$, there exists $M_i>0$ for
  each $i\neq 2$, such that
  \begin{equation*}
    \Phi\supset \{\vy\in E: \abs{\phi_i(\vy-I)}\leq M_i,\ \forall\,i\}.
  \end{equation*}
  We choose $\alpha_i=cM_i$ for each $i\neq 2$, and put
  \begin{equation*}
    \Psi = \{\vy\in E:\abs{\phi_i(\vy-I)}<\alpha_i,\ \forall\,i\}.
  \end{equation*} 
  Then $\Psi$ is a neighbourhood of $C$.
  
  Define
  \begin{align}
    F   & = \{t\in\frakB:\abs{f_i(t)} < M_i,\ \forall\, i\} 
    \label{eq:F}\\
    & \subset \{t\in\frakB:u(t)\vv\in \Phi\} ,
    \label{eq:F-2} \\
    \intertext{and}
    F_1 & = \{t\in\frakB:\abs{f_i(t)}< \alpha_i,\ \forall\,i\} 
    \label{eq:F1} \\
    & = \{t\in\frakB:u(t)\vv\in\Psi\}.
    \label{eq:F1-2}
  \end{align}
  Suppose that \eqref{eq:in-Phi} does not hold. Then
  \begin{equation*}
    \frakB\not\subset F.
  \end{equation*}
  
  A ball $B\subset F$ is a called a {\em maximal ball\/} in $F$, if
  $B'\not\subset F$ for any ball $B'\subset\frakB$ strictly bigger
  than $B$.
  
  Let $B$ be a maximal ball in $F$. We claim that
  \begin{equation}
    \label{eq:maxB} 
    \sup_{t\in B}\abs{f_{i_0}(t)}\geq \tau\inv M_{i_0},\quad \text{for
      some $i_0$},
  \end{equation}
  where $\tau=p^2C_2>1$ if $\qp$ is non-archimedean, and $\tau=1$ if
  $\qp$ is archimedean.

  Suppose if $\sup_{t\in B} \abs{f_i(t)}<M_i$ for all $i$, then
  $B\subset F\subsetneq\frakB$. Then there exists a ball
  $B'\subset\frakB$ strictly bigger than $B$. Hence $B'\not\subset F$.
  Therefore by \eqref{eq:F}, for some $i_0$,
  \begin{equation*}
    \sup_{t\in B'}\abs{f_{i_0}(t)}\geq M_{i_0}.
  \end{equation*}
  If $\qp$ is a finite extension of $\Q_p$, we choose $B'$ such that
  $\rad(B')/\rad(B)=p$; and \eqref{eq:grow} implies \eqref{eq:maxB}.
  If $\qp$ is archimedean, \eqref{eq:maxB} is straightforward to
  conclude.

  If $\qp$ is non-archimedean or $\qp=\R$, then any two intersecting
  maximal balls in $F$ are same. Therefore $F=\cup\cB$, where $\cB$
  denotes the collection of disjoint maximal balls of $F$.  If
  $\qp=\C$ then there exists a collection $\cB'$ of disjoint maximal
  balls in $F$ such that if we put $\cB=\{B_x(3r): B_x(r)\in \cB'\}$
  then $F\subset\cup\cB$ (cf.~\cite[Proof of
  Lemma~8.4]{Rudin:RealComplex}).  Therefore
  \begin{equation} \label{eq:sum:B}
    \sum_{B\in\cB} \theta(B)\leq \kappa \theta(F)
  \end{equation}
  where $\kappa=1$ if $\qp\neq \C$ and $\kappa=9$ if $\qp=\C$. 
  
  We specify the value $c=(C_2\epsilon^{2/l})\inv \tau\inv$.  Let
  $B\in\cB$. Therefore by \eqref{eq:maxB}, there exists $i_0$ such
  that 
\begin{equation*}
\sup_{t\in B} \abs{f_{i_0}(t)}\geq \tau\inv M_{i_0}.
\end{equation*} 
Since $\alpha_{i_0}=cM_{i_0}$, by \eqref{eq:F1} and
Lemma~\ref{lema:poly:basic} applied to $f_{i_0}$:
  \begin{equation*} 
   \theta(F_1\cap B)\leq \theta(\{t\in B:\abs{f_{i_0}(t)}
   <\alpha_{i_0}\})\leq \epsilon\cdot\theta(B).
  \end{equation*}
  
  Therefore by \eqref{eq:sum:B}, we get that
  \begin{align}
    \theta(F_1)& =\theta(F_1\cap(\cup\cB)) \nonumber\\
    & \leq \sum_{B\in\cup\cB} \theta(F_1\cap B) 
    \leq \epsilon \sum_{B\in\cup\cB} \theta(B) 
    \leq \kappa \epsilon\cdot \theta(F). \nonumber
  \end{align}
  Therefore \eqref{eq:avoid} follows from \eqref{eq:F-2} and
  \eqref{eq:F1-2}.
\end{proof}

\subsection{Proof of Theorem~\ref{thm:avoid:2}:}

Given $\epsilon>0$ and a compact set $C_1\subset W$, put $C=C_1\cdot I
\subset \cW$, and obtain $D\subset\cW$ as in
Proposition~\ref{prop:avoid}. By Lemma~\ref{lema:inject}, there exists
a neighbourhood $\Phi$ of $D$ in $E$ such that
\begin{equation} \label{eq:card1}
  \card{\cR(z)\cap\Phi}\leq 1,\quad \forall\, z\in G/\Gamma.
\end{equation}
In other words, every element of $G/\Gamma$ can have at most one
representative in $\Phi$.

The set $W_D:=\{w\in W_1:w\cdot I\in D\}$ is compact. Also $\{g:g\cdot
I\in D\}=W_DH$. Now $D_1:=W_DHy$ is a compact subset of $WHy\subset
G/\Gamma$. Since $x\not\in WHy$, and $\cR(x)\cap D=\emptyset$. Since
$\cR(x)$ is discrete and $D$ is compact, there exists a compact
neighbourhood $V$ of the identity in $G$ such that $V\cR(x)\cap
D=\emptyset$. We replace $\Phi$ by $\Phi\setminus V\cR(x)$, which is
an open neighbourhood of $D$. Since $x_i\to x$, we have $x_i\in
V\cR(x)$ for all $i\geq i_0$ for some $i_0$. Since $\cR(vx)=v\cR(x)$
for all $v\in V$, we have that have that
\begin{equation} \label{eq:not-in-Phi}
\cR(x_i)\cap \Phi=\emptyset, \quad \forall\,i\geq i_0.
\end{equation}

By Proposition~\ref{prop:avoid} and \eqref{eq:not-in-Phi}, there
exists a neighbourhood $\Psi$ of $C$ in $E$ contained in $\Phi$ such
that for any $T>0$, 
\begin{equation} \label{eq:Psi-Phi}
\begin{split}
  &\theta(\{t\in B_0(T) :  u(t)\vv\in\Psi\})\\
&\leq  \epsilon\cdot\theta(\{t\in B_0(T):u(t)\vv\in\Phi\}), \quad \forall\,
  \vv\not\in \Phi.
\end{split}
\end{equation}

Let $\Psi_1=\{gy:g\cdot I\in\Psi,\, g\in G\}\subset G/\Gamma$. Since
$\Psi$ is a neighbourhood of $C=C_1H\cdot I$ in $E$, we conclude that
$\Psi_1$ is a neighbourhood of $C_1Hy$ in $G/\Gamma$.

Now fix $T>0$. For a subset $\Omega\subset E$,
define
\begin{equation*}
  L_\Omega(\vv)=\{t\in B_0(T): u(t)\vv\in \Omega\},\quad \forall\,
  \vv\in E.
\end{equation*}

We observe that 
\begin{equation}
\label{eq:Psi_1-Psi}
\{t\in B_0(T):u(t)x_i\in \Psi_1\}=\bigcup_{\vv\in\cR(x_i)}\,
L_\Psi(\vv).
\end{equation}

By \eqref{eq:not-in-Phi} and \eqref{eq:Psi-Phi}, 
\begin{equation} \label{eq:compare}
\theta(L_\Psi(\vv))\leq \epsilon\cdot \theta(L_\Phi(\vv)), \quad
\forall\,\vv\in\cR(x_i).
\end{equation}

We claim that
\begin{equation} \label{eq:disjoint}
L_\Phi(\vv_1)\cap L_\Phi(\vv_2)=\emptyset,\quad \forall\, \vv_1\neq
\vv_2,\,\vv_i\in\cR(x_i).
\end{equation}

If the claim is false, then there exists $t\in L_\Phi(\vv_1)\cap
L_\Phi(\vv_2)$. Therefore $\{u(t)\vv_1,u(t)\vv_2\}\subset
\cR(u(t)x_i)\cap\Phi$ and $u(t)\vv_1\neq u(t)\vv_2$. This contradicts
\eqref{eq:card1}. This proves the claim.

Now due to (\ref{eq:disjoint}),
\begin{equation} \label{eq:disj}
\sum_{\vv\in \cR(x_i)} \theta(L_\Phi(\vv))\leq \theta(t_i\zp).
\end{equation}

Combining (\ref{eq:Psi_1-Psi}), (\ref{eq:compare}), and
(\ref{eq:disj}), we get 
\begin{equation*}
\theta(\{t\in B_0(T):u(t)x\in\Psi_1\})\leq
\epsilon\cdot\Theta(t_i\zp), \quad \forall\, i\geq i_0.
\end{equation*} 
\qed

\section{Uniform distribution for unipotent orbits}

\subsection{Proof of Lemma~\ref{lema:inv}} 
Let $\epsilon>0$ be given.  Since $0<\theta(\frakO)<\infty$ by the
observation as before, we may assume that $\frakO$ is compact. Now
since $\theta$ is translation invariant and regular, there exists
$\delta>0$, such that for any $s\in\qp$ with $\abs{s}\leq \delta$, we
have
\begin{equation*}
  \theta((\frakO+s)\,\Delta\,\frakO)/\theta(\frakO)\geq \epsilon,
\end{equation*}
where $A\,\Delta\,B:=(A\setminus B)\cup(B\setminus A)$.

Let $s\in\qp$ be given. If $t\in\qp$ such that $\abs{t}\geq
\delta\inv\abs{s}$, then
\begin{align*}
  \theta((t\frakO+s)\,\Delta\, t\frakO)/\theta(t\frakO) 
  & = \theta(t((\frakO+t\inv s)\,\Delta\,\frakO))/\theta(t\frakO) \\
  & = \theta((\frakO+t\inv s)\,\Delta\,\frakO)/\theta(\frakO)\leq\epsilon.
\end{align*}
Let $i_0\in\N$ be such that $\abs{t_i}\geq \delta\inv\abs{s}$ for
all $i\geq i_0$.  Then for any Borel set $E\subset G/\Gamma$,
\begin{equation*}
  \abs{\mu_i(u(-s)E)-\mu_i(E)}\leq \theta((t_i\frakO+s)\,\Delta\,
  \frakO)/\theta(t_i\frakO)\leq\epsilon,\quad \forall\,i\geq i_0.
\end{equation*} 
Therefore $\abs{\mu(u(-s)E)-\mu(E)}\leq\epsilon$. Since $\epsilon$, $s$,
and $E$ are arbitrary, $\mu$ is $U$-invariant. 
\qed

\subsection{On the definition of singular set}

We begin with a group theoretic observation. 

\begin{prop} 
  \label{prop:wHw} 
  Suppose $F$ is a closed subgroup of $G$
  containing $U$ and $x\in G/\Gamma$ such that $Fx$ is compact. Then
  there exists $\cJ\in\cC$ and $w\in W$ such that $wH_{\cJ}w\inv
  \subset F \subset Z(G)(wH_{\cJ}w\inv)$.
\end{prop}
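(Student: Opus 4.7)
The plan proceeds in four steps: extract a candidate $wH_{\cJ_0}w\inv$ from the $U$-orbit closure, upgrade that orbit inclusion to a group-level inclusion, classify $\Lie(F)$, and conclude by a normalizer computation.

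First I apply Theorem~\ref{thm:closure} to the $U$-orbit: since $U \subset F$ and $Fx$ is closed by compactness, $\cl{Ux} \subset Fx$, and Theorem~\ref{thm:closure} supplies $\cJ_0 \in \cC_0$ and $w \in W$ with $\cl{Ux} = wH_{\cJ_0}w\inv x \subset Fx$.

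Next I would upgrade this orbit inclusion to the group-level inclusion $wH_{\cJ_0}w\inv \subset F$. Given any sequence $h_n \to e$ in $wH_{\cJ_0}w\inv$, compactness of $Fx$ makes $F \to Fx \cong F/F_x$ a proper covering map with discrete fibre $F_x$, so one lifts to $f_n \in F$ with $f_n x = h_n x$ and $f_n \to e$; then $f_n\inv h_n \in G_x$ tends to $e$, and discreteness of $G_x$ forces $h_n = f_n \in F$ for large $n$. So $F \cap wH_{\cJ_0}w\inv$ is an open subgroup of $wH_{\cJ_0}w\inv \cong \SL_2(\qp)^{\num{\cJ_0}}$ containing $U$. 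The key lemma here is that any open subgroup of $\SL_2(\qp)^m$ containing the diagonal unipotent is the whole group: trivially so in the archimedean case by connectedness of $\SL_2(K)^m$; in the non-archimedean case, such an open subgroup contains a compact open subgroup, and conjugation of $U$ by a single-factor Weyl element in that subgroup (e.g.\ $(w_1(1)^{-1}w_1(-1)w_1(1), e, \dots, e) = (w, e, \dots, e)$ type) yields elements whose first coordinate is transposed into the opposite unipotent while other coordinates remain as $u_1(t)$; multiplying by $U\inv$ cancels the non-first coordinates, producing non-trivial elements supported in a single factor, and Bruhat generation in each factor gives all of $\SL_2(\qp)^m$. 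Hence $wH_{\cJ_0}w\inv \subset F$.

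The core of the argument is then a Lie-algebra classification: any Lie subalgebra $\mathfrak{f}$ of $\Lie(G) = \mathfrak{sl}_2(\qp)^n$ containing $\Ad(w)\Lie(H_{\cJ_0})$ equals $\Ad(w)\Lie(H_{\cJ^*})$ for some partition $\cJ^* \in \cC_0$ refining $\cJ_0$. Conjugating by $\Ad(w\inv)$ reduces to $w = e$; then complete reducibility of the semisimple action of $\Lie(H_{\cJ_0})$ on $\Lie(G)$, together with commutation of the distinct factors $G_J$ ($J \in \cJ_0$), yields the block decomposition $\mathfrak{f} = \bigoplus_{J \in \cJ_0}(\mathfrak{f} \cap \Lie(G_J))$. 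Block-wise I classify Lie subalgebras of $\mathfrak{sl}_2(\qp)^k$ containing the diagonal $\mathfrak{sl}_2$: under the identification $\mathfrak{sl}_2(\qp)^k \cong \mathfrak{sl}_2(\qp) \otimes_{\qp} \qp^k$ with bracket $[X \otimes v, Y \otimes v'] = [X, Y] \otimes (v \cdot v')$ (coordinatewise product), such subalgebras correspond bijectively to unital $\qp$-subalgebras of $\qp^k$, which in turn are classified by partitions of $\{1, \dots, k\}$ via the idempotent decomposition. Assembling the blocks gives $\cJ^*$.

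Applying this classification to $\mathfrak{f} = \Lie(F)$ produces $\cJ = \cJ^* \in \cC_0$ with $\Lie(F) = \Lie(wH_\cJ w\inv)$; the same lift-and-open-subgroup argument as in the second paragraph gives $wH_\cJ w\inv \subset F$, and since $F$ and $wH_\cJ w\inv$ share a Lie algebra, $wH_\cJ w\inv$ is open in $F$ and is therefore normalized by $F$. A blockwise computation gives $\nor(wH_\cJ w\inv) = Z(G) \cdot wH_\cJ w\inv$ (an element $(g_1, \dots, g_k) \in \SL_2(\qp)^k$ normalizing the diagonal $\SL_2$ must have each $g_j g_1\inv$ centralize $\SL_2(\qp)$, hence lie in $\{\pm I\}$), so $F \subset Z(G)wH_\cJ w\inv$, completing the proof. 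I expect the Lie-algebra classification to be the principal obstacle, though its reduction to unital subalgebras of $\qp^k$ via coordinatewise multiplication makes the argument algebraically transparent.
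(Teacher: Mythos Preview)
Your approach is essentially correct but follows a genuinely different route from the paper. The paper argues by a direct induction on $n$: for $n=1$ it shows that $F\subset N_G(U)=DU$ would force $F_x\subset uDu\inv$ and make $F/F_x$ non-compact, so some $fUf\inv\neq U$ lies in $F$ and then $U'UU'U=\SL_2(K)$; for general $n$ it projects to factors, shows each $q_i(F)=G_{\{i\}}$, enlarges to $G_{\{1\}}F$, and reduces via the inductive hypothesis and a short Lie-algebra argument only in the final $n=2$ case. No appeal to Theorem~\ref{thm:closure} and no global Lie-algebra classification are needed.

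Your route---invoke Theorem~\ref{thm:closure} to obtain $wH_{\cJ_0}w\inv\subset F$, then classify Lie subalgebras of $\mathfrak{sl}_2(K)^n$ containing a diagonal $\mathfrak{sl}_2^{\,m}$ via the bijection with unital $K$-subalgebras of $K^k$---is conceptually clean, and the idempotent/partition classification is correct. Two points deserve attention. First, the step ``$wH_\cJ w\inv$ is open in $F$ and is therefore normalized by $F$'' is not justified by openness alone; the correct reason is that $wH_\cJ w\inv$ is Zariski-connected and in characteristic~$0$ a connected algebraic subgroup is determined by its Lie algebra, so $\Ad(f)\Lie(wH_\cJ w\inv)=\Lie(F)=\Lie(wH_\cJ w\inv)$ already gives $f\in\nor(wH_\cJ w\inv)$. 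Second, the paper explicitly organizes \S\S6--10 to be readable independently of \S\S2--4 (see the end of the Introduction), and the paper's inductive proof respects that; your argument imports Theorem~\ref{thm:closure} from the first part and thereby sacrifices this independence. In short, the paper's induction buys self-containment and elementarity, while your Lie-theoretic argument buys a transparent structural picture at the price of relying on the deepest theorem in the article.
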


\begin{proof}
  First we consider the case of $n=1$, that is $G=\SL_2(\qp)$. Now
  suppose that $F\subset N_G(U)=DU$. Since $[F,F]\subset U$, by
  Proposition~\ref{prop:cocomp}, $[F_x,F_x]\subset U\cap G_x=\{e\}$.
  Therefore $F_x$ is an abelian subgroup of $DU$. Also since $F_x\cap
  U=\{e\}$, it is straightforward to verify that $F_x\subset uDu\inv$
  for some $u\in U$. Since $F=U(uDu\inv\cap F)$, it follows that
  $F/F_x$ cannot be compact, a contradiction.

Therefore there exists $f\in F$ such that $U':=fUf\inv\neq U$. Then
for the standard $\SL_2(\qp)$ action on $\qp^2$,
\begin{equation*}
  UU'\smat{1 \\ 0}
  = \qp^2 \setminus \smat{\qp \\ 0}.
\end{equation*}  
Hence
\begin{equation*}
  U'UU'\smat{1\\ 0}=\qp^2\setminus\{0\}.
\end{equation*}
Since the stabilizer of $\smat{1 \\ 0}$ is $U$, we have that
$U'UU'U=\SL_2(\qp)$. Therefore $F=G=H_{\{1\}}$, and the proof is
complete.

We intend to prove the general case by induction on $n$. Therefore we
assume that the proposition is valid for $k$ in place of $n$, where
$k=1,\ldots,n-1$.

For $i\in\{1,\ldots,n\}$ and $J=\{1,\ldots,n\}\setminus\{i\}$, let
$p_i:G\to G_J$ and $\bar p_i:G/\Gamma \to G_J/\Gamma_J$ be the natural
quotient maps.

Now if $F\supset G_i$ for some $i$, then $p_i(F)\bar p_i(x)=\bar
p_i(Fx)$ is compact. Since $p_i(U)$ plays the role of $U$ in $G_J$,
the general result easily follows from the induction hypothesis.

Now we assume that $F\not\supset G_j$ for each $j$.  For any
$i=1,\ldots,n$, let $q_i=G\to G_i$ and $\bar q_i:G/\Gamma\to
G_{\{i\}}/\Gamma_i$ be the natural projection maps for $i=1,\ldots,n$.
Then from the case of $n=1$ we deduce that
\begin{equation*}
  \bar q_i(Fx)=q_i(F)\bar q_i(x)=G_i/\Gamma_i.  
\end{equation*}
Hence $q_i(F)=G_i$.  

Let $F_1=G_{\{1\}}F$. Since $G_{\{1\}}y$ is compact for all $y\in
G/\Gamma$, we have that $F_1x$ is compact. Therefore by what we have
proved above there exists $w\in W$ and $\cJ\in\cC$ such that
\begin{equation*}
  wH_{\cJ}w\inv \subset F_1 \subset Z(G) wH_{\cJ}w\inv.
\end{equation*}

If $H_\cJ\neq G$ then $wH_{\cJ}w\inv\cong \SL_2(\qp)^k$ for some
$1\leq k\leq n-1$. Since
\begin{equation*}
  wH_{\cJ}w\inv/(wH_{\cJ}w\inv)_x = 
  \prod_{J\in\cJ} wH_J w\inv/(wH_Jw\inv)_x,
\end{equation*}
we conclude the result from the induction hypothesis.  

Therefore we can assume that $G_{\{1\}}F=F_1=G$. Since
$F\cap\ker(q_1)=F\cap G_{\{2,\ldots,n\}}$ is a normal subgroup of $F$,
and it commutes with $G_{\{1\}}$, we have that $F\cap\ker(q_1)$ is
normal in $G$ and in particular it is a normal subgroup of
$G_{\{2,\ldots,n\}}$.  Since we have assumed that $F$ does not contain
$G_{\{j\}}$ for any $j$, we have that $F\cap\ker(q_1)\subset Z(G)$.
Since $q_1(F)=G_{\{1\}}$, we have $n=2$. Thus $G=\SL_2(\qp)\times
\SL_2(\qp)$, and $\Lie(F)\cong \Lie(\SL_2(\qp))$.  By the same
argument as above $F\cap\ker(q_2)\subset Z(G)$. Since projection of
$F$ on each of the factors is surjective, there exists $g\in G_2$ such
that
\begin{equation*}
  \Lie(F)=\{(X,\Ad(g)X):X\in\Lie(\SL_2(\qp))\}
\end{equation*}
Since $U\subset F$, we have that
$\Ad(g)\smat{0&1\\0&0}=\smat{0&1\\0&0}$. Therefore $g\in Z(G)W\cap
G_2$. Therefore we can choose $w\in W$ such that
$wH_{\{1,2\}}w\inv\subset F \subset Z(G)wH_{\{1,2\}}w\inv$. This
proves the proposition in all the cases.
\end{proof}

Now from the above result it is straightforward to deduce the
following:

\begin{cor} 
\label{cor:singular}
The singular set $\cS(U,\Gamma)$ consists of those $x\in G/\Gamma$
such that $Fx$ is compact for some proper closed subgroup $F$ of $G$
containing $U$. \qed
\end{cor}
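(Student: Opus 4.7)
The result is equivalent to two inclusions. For the forward direction, if $x \in \cS(U,\Gamma)$, then by definition there exist $\cJ \in \cC_0$ and $w \in U^\perp$ with $H_\cJ \neq G$ and $(wH_\cJ w\inv)x$ compact. Then $F := wH_\cJ w\inv$ is closed, proper (since $H_\cJ \neq G$), and contains $U$: indeed $\cJ \in \cC_0$ gives $U \subset H_\cJ$, and $w \in U^\perp \subset W$ centralizes $U$ because $W$ is abelian, so $U = wUw\inv \subset wH_\cJ w\inv$. Thus $F$ witnesses the right-hand condition.

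For the converse, let $F$ be a proper closed subgroup of $G$ containing $U$ with $Fx$ compact. I would apply Proposition~\ref{prop:wHw} to produce $\cJ \in \cC$ and $w \in W$ with
\begin{equation*}
  wH_\cJ w\inv \subset F \subset Z(G)(wH_\cJ w\inv).
\end{equation*}
The key technical step is to show $U \subset wH_\cJ w\inv$. The quotient $Z(G)(wH_\cJ w\inv)/(wH_\cJ w\inv)$ is a finite group of $2$-power order (as $\ord{Z(G)}$ divides $2^n$), while $U \cong (\qp,+)$ is a $\qp$-vector space, hence divisible, hence admits no nontrivial homomorphism to a finite group. So the image of $U$ in this finite quotient is trivial and $U \subset wH_\cJ w\inv$. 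Conjugating by $w\inv$ and using that $W$ is abelian gives $U \subset H_\cJ$, so $\cJ \in \cC_0$; and properness of $F$ forces $H_\cJ \neq G$, since $H_\cJ = G$ would give $F \supset wGw\inv = G$.

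It remains to replace $w$ by an element of $U^\perp$ and to verify the compactness clause. Decompose $w = u \cdot w^\perp$ using $W = U \oplus U^\perp$ as $\qp$-vector spaces, with $u \in U \subset H_\cJ$ and $w^\perp \in U^\perp$. Since $W$ is abelian, $u$ commutes with $w^\perp$, and since $u \in H_\cJ$ we have $uH_\cJ u\inv = H_\cJ$; hence $wH_\cJ w\inv = w^\perp H_\cJ (w^\perp)\inv$. Finally, $wH_\cJ w\inv$ has finite index in $F$ (as $\ord{Z(G)} < \infty$), so it is open in $F$; its orbits partition $Fx$ into finitely many open, hence clopen, pieces, so $(wH_\cJ w\inv)x$ is a closed subset of the compact set $Fx$ and is therefore compact. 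This verifies all clauses of the definition of $\cS(U,\Gamma)$ with $w^\perp$ in place of $w$. The main obstacle is really just bookkeeping: Proposition~\ref{prop:wHw} does the heavy lifting, and the two remaining tasks are to eliminate the $Z(G)$ factor via the divisibility of $U$ and to eliminate the $U$-component of $w$ via the abelian structure of $W$.
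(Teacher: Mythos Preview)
Your proof is correct and follows exactly the approach the paper intends: the paper gives no proof of this corollary beyond the remark that it is ``straightforward to deduce'' from Proposition~\ref{prop:wHw}, and your argument is precisely the natural fill-in, handling the bookkeeping (eliminating the $Z(G)$ factor via divisibility of $U$, passing from $w\in W$ to $w^\perp\in U^\perp$ via $W=U\oplus U^\perp$, and deducing compactness of the smaller orbit from finite index) that the paper leaves implicit.
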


\subsection{Ergodic $U$-invariant measures on $G/\Gamma$}

The following description of $U$-ergodic measures was obtained in
\cite{R:measure,R:p-adic,Mar+Tom:Invent}

\begin{theo}[Ratner, Margulis-Tomanov]
  \label{thm:ergodic}
  Let $\lambda$ be a $U$-invariant $U$-ergodic probability measure on
  $G/\Gamma$. Then there exists a closed subgroup $F$ of $G$
  containing $U$ and a point $x\in G/\Gamma$ such that $Fx\cong F/F_x$
  is compact and $\lambda$ is the unique $F$-invariant probability
  measure supported on $Fx$.

  In particular, by Corollary~\ref{cor:singular}, if
  $\lambda(\cS(U,\Gamma))=0$ then $\lambda$ is $G$-invariant.  \qed
\end{theo}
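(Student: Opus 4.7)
The main assertion is the Ratner/Margulis--Tomanov measure classification; since this is a deep theorem we would rely on the cited sources for the full proof, and I can only outline a strategic plan. Following Margulis--Tomanov, I would form the stabilizer group
\[
F = \{g \in G : g_\ast\lambda = \lambda\},
\]
which is automatically a closed subgroup containing $U$. The goal is to promote $F$ until $\lambda$ is supported on a single compact $F$-orbit. The central mechanism is a polynomial divergence argument: if $\lambda$-generic points $x$ and $y = gx$ with $g \notin F$ coexist, then analyzing the relative displacement of $u(t)x$ and $u(t)y$ through conjugation identities in the spirit of Lemma~\ref{lema:conj:u} produces, in a limit, a new element normalizing $\supp(\lambda)$ that lies outside $F$, contradicting maximality. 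Combined with $U$-ergodicity, this forces $\lambda$ to concentrate on a single orbit $Fx$. Compactness of that orbit comes from tightness plus the fact that in this $\SL_2(K)^n$-setting the candidate stabilizers are reductive, and once the compact orbit is in hand, Proposition~\ref{prop:wHw} identifies $F$ modulo $Z(G)$ with some conjugate $wH_{\cJ}w\inv$, so $\lambda$ is the unique invariant probability on that orbit.

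The ``in particular'' clause, which is the part actually used later in the paper, I would derive directly from Corollary~\ref{cor:singular} combined with the main statement. Suppose $\lambda(\cS(U,\Gamma)) = 0$ and write $\lambda$ as the unique $F$-invariant probability measure on the compact orbit $Fx$, with $U \subset F$. If $F$ were a proper closed subgroup of $G$, then every $y \in Fx$ would admit $F$ itself as a witness (proper, closed, contains $U$, and $Fy$ compact), so by Corollary~\ref{cor:singular} we would have $Fx \subset \cS(U,\Gamma)$ and hence $\lambda(\cS(U,\Gamma)) \geq \lambda(Fx) = 1$, contradicting the hypothesis. Therefore $F = G$ and $\lambda$ is the unique $G$-invariant probability measure on $G/\Gamma$.

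The genuinely hard step is the polynomial divergence argument that manufactures new elements of the stabilizer; this is the technical core of the Ratner/Margulis--Tomanov papers, and in the non-archimedean case it requires additional care because the $U$-orbit geometry is organized on an ultrametric rather than a real line, so the classical ``shearing'' estimates must be replaced by their $p$-adic analogues. By contrast, the deduction of the ``in particular'' clause from the main body of the theorem is essentially immediate once Corollary~\ref{cor:singular} is available, which is why the author closes this section with only a $\qed$.
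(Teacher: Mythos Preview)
Your proposal is correct and matches the paper's treatment: the paper does not prove the main assertion at all---it is imported from \cite{R:measure,R:p-adic,Mar+Tom:Invent} with only a bare \qed---so your decision to offer a strategic outline rather than a full argument is exactly right. Your derivation of the ``in particular'' clause from Corollary~\ref{cor:singular} (if the $F$ furnished by the classification were proper, the compact orbit $Fx$ would lie entirely in $\cS(U,\Gamma)$, forcing $\lambda(\cS(U,\Gamma))=1$) is precisely the argument the paper intends.
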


\subsection{Proof of Theorem~\ref{thm:uniform}.}
\label{sec:proof-uniform}
We intend to prove this result by induction on $n$.

If $x\in \cS(U,\Gamma)$ then there exists $w\in W$ and $\cJ\in\cC$
such that if we put $F=wH_{\cJ}w\inv$ then $Ux\subset Fx$, $Fx$ is
compact, and $F\cong \SL_2(\qp)^k$, where $k=\abs{\cJ}\leq n-1$. Since
\begin{equation*}
  Fx\cong F/F_x=
  \prod_{J\in\cJ} wH_Jw\inv/(wH_Jw\inv)_x 
\end{equation*}
and $wH_Jw\inv\cong \SL_2(\qp)$, we can replace $G$ by $F$ and the
result follows from the induction hypothesis.

Therefore now we can assume that $x\in G/\Gamma\setminus
\cS(U,\Gamma)$. We put $x_i=x$ for all $i$. Choose any sequence
$T_i\to\infty$ in $\qp$. Then by (\ref{eq:mu_i}) and (\ref{eq:mu_Ti})
we have that $\mu_i=\mu_{T_i}$ for all $i$. Now by passing to a
subsequence we may assume that $\mu_i\to \mu$ for some $\mu\in\cM$;
that is, for any $f\in C(G/\Gamma)$,
\begin{equation*}
  \lim_{i\to\infty} \int_{G/\Gamma} f\, d\mu_i = 
  \int_{G/\Gamma} f \, d\mu.
\end{equation*}

By Lemma~\ref{lema:inv} we have that $\mu$ is $U$-invariant.  By
Theorem~\ref{thm:avoid} we have that $\mu(\cS(U,\Gamma))=0$. Therefore
in view of the decomposition of an invariant measure into its ergodic
components, we have that $\lambda(\cS(U,\Gamma))=0$ for almost all
$U$-ergodic components $\lambda$ of $\mu$. Therefore by
Theorem~\ref{thm:ergodic} almost all $U$-ergodic components of $\mu$
are $G$-invariant. Hence $\mu$ is $G$-invariant.  \qed

\end{document}